\numberwithin{equation}{section}
\theoremstyle{plain}
\newtheorem{Proposition}[equation]{Proposition}
\newtheorem{Corollary}[equation]{Corollary}
\newtheorem*{Corollary*}{Corollary}
\newtheorem{Theorem}[equation]{Theorem}
\newtheorem*{Theorem*}{Theorem}
\theoremstyle{definition}
\newtheorem{Remark}[equation]{Remark}
\setlist[enumerate]{leftmargin=*}
\setlist[itemize]{leftmargin=*}
\def\A{\mathbb {A}}
\def\C{\mathbb{C}}
\def\R{\mathbb{R}}
\def\D{\mathbb{D}}
\def\T{\mathbb{T}}
\def\N{\mathbb{N}}
\def\M{\mathcal{M}}
\def\re{\mathop{\rm Re}\nolimits}
\renewcommand{\leq}{\leqslant}
\renewcommand{\geq}{\geqslant}
\renewcommand{\subset}{\subseteq}
\renewcommand{\phi}{\varphi}
\renewcommand{\vec}[1]{{\bf #1}}
\newcommand{\0}{{\color{lightgray}0}}
\author[E. A. Gallardo-Guti\'errez]{Eva A. Gallardo-Guti\'errez}
\address{Eva A. Gallardo-Guti\'errez \newline
Departamento de An\'alisis Matem\'atico y Matem\'atica Aplicada,\newline
Facultad de Matem\'aticas,
\newline Universidad Complutense de
Madrid, \newline
Plaza de Ciencias 3, 28040 Madrid,  Spain
\newline
and \newline
Instituto de Ciencias Matem\'aticas ICMAT (CSIC-UAM-UC3M-UCM),
\newline Madrid,  Spain }
\email{eva.gallardo@mat.ucm.es}
\author[J. R. Partington]{Jonathan R. Partington}
\address{Jonathan R. Partington, \newline
School of Mathematics, \newline
University of Leeds, \newline
Leeds LS2 9JT, United Kingdom}
\email{J.R.Partington@leeds.ac.uk}
\author[W. T. Ross]{William T. Ross}
\address{William T. Ross, \newline
Department of Mathematics and Statistics,
\newline 
University of Richmond, 
\newline Richmond, VA 23173, USA}
\email{wross@richmond.edu}
\subjclass[2010]{Primary 47A15, 47A55, 47B15}
\thanks{First author is partially supported by Plan Nacional  I+D grant no. PID2019-105979GB-I00 (Spain), the Spanish Ministry of Science and Innovation, through the ``Severo Ochoa Programme for Centres of Excellence in R\&D'' (CEX2019-000904-S) and from the Spanish National Research Council, through the ``Ayuda extraordinaria a Centros de Excelencia Severo Ochoa'' (20205CEX001).}
\title{Invariant subspaces of the Ces\`{a}ro operator}
\keywords{Ces\`{a}ro operator, Hardy space, invariant subspace}
\begin{document}

\begin{abstract}
This paper explores various classes of invariant subspaces of the classical Ces\`{a}ro operator $C$ on the Hardy space $H^2$. We provide a characterization of the finite co-dimensional $C$-invariant subspaces, based on earlier work of the first two authors, and determine exactly which model spaces are $C$-invariant subspaces; using this, we describe the $C$-invariant subspaces contained in model spaces, which we show are all cyclic. Along the way, we re-examine an associated Hilbert space of analytic functions on the unit disk developed by Kriete and Trutt. We also make a connection between the adjoint of the Ces\`{a}ro operator and certain composition operators on $H^2$ which have universal translates in the sense of Rota.
\end{abstract}

\maketitle

\section{Introduction}

This paper examines the invariant subspaces of the {\em Ces\`{a}ro operator}
$$(C f)(z) := \frac{1}{z} \int_{0}^{z} \frac{f(\xi)}{1 - \xi} d\xi$$ on the {\em Hardy space} $H^2$ \cite{Duren} of the unit disk $\D := \{z \in \C: |z| < 1\}$. Recall that $H^2$ is the Hilbert space of analytic functions $f$ on $\D$ with
$$
\|f\|_{H^2} := \Big(\sup_{0 < r < 1} \int_{\T} |f(r \xi)|^2 dm(\xi)\Big)^{\frac{1}{2}} < \infty.
$$
In the above,  $m$ denotes Lebesgue measure on the unit circle $\T := \partial \D$, normalized so that $m(\T) = 1$. Note also that $\|f\|_{H^2}^{2} = \sum_{n \geq 0} |a_n|^2$, where $\{a_n\}_{n \geq 0}$ is the sequence of Taylor coefficients of $f$.

Hardy's inequality \cite{MR1544414} (see also \cite{MR944909}) says that $C$ is a bounded operator on $H^2$. Moreover, a calculation shows that the matrix representation of $C$ with respect to the orthonormal basis $\{z^n\}_{n \geq 0}$ for $H^2$, i.e., the infinite matrix whose entries are $\langle C z^{n}, z^m\rangle_{H^2} $, $m, n \geq 0$, is the well known {\em Ces\`{a}ro matrix}
\begin{equation}\label{Cedcecefcematrix}
C := \begin{bmatrix}
1 & \0 & \0 & \0 & \0 &  \cdots\\[3pt]
\frac{1}{2} & \frac{1}{2} & \0 & \0 & \0 & \cdots\\[3pt]
\frac{1}{3} & \frac{1}{3} & \frac{1}{3} & \0 & \0 & \cdots\\[3pt]
\frac{1}{4} & \frac{1}{4} & \frac{1}{4} & \frac{1}{4} & \0 & \cdots\\[3pt]
\frac{1}{5} & \frac{1}{5} & \frac{1}{5} & \frac{1}{5} & \frac{1}{5} & \cdots\\
\vdots & \vdots & \vdots & \vdots & \vdots & \ddots
\end{bmatrix}.
\end{equation}
In this matrix setting, one thinks of $C$ as acting on column vectors $\vec{a} = (a_0, a_1, a_2, \ldots)^{T}$ in $\ell^2$ via matrix multiplication
$$C \vec{a} = \Big(a_0, \frac{a_0 + a_1}{2}, \frac{a_0 + a_1 + a_2}{3}, \ldots\Big)^{T}.$$
As was known for quite some time, the Ces\`{a}ro matrix is the basis for an important summability method for Fourier series \cite{MR1963498}.
A more detailed analysis from a paper of Brown, Halmos, and Shields establishes the following basic operator theory facts about $C$ and its adjoint $C^*$ (which we record here for later use).

\begin{Proposition}[Brown--Halmos--Shields  \cite{MR187085}]\label{BrownShields}
For the Ces\`{a}ro operator $C$ on $H^2$,
\begin{enumerate}
\item[(a)] $\|I - C\| = 1$;
\item[(b)] $\|C\| = 2$;
\item[(c)] $\sigma(C) = \{z: |z - 1| \leq 1\}$;
\item[(d)] $\sigma_p(C) = \varnothing$;
\item[(e)] $\sigma_p(C^{*}) = \{z: |z - 1| < 1\}$.
\end{enumerate}
\end{Proposition}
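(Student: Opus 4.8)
I would prove (d) and (e) as direct eigenvalue computations, then obtain (a) from a single operator inequality, with (b) and (c) falling out of (a) and (e). For (e): from $Cz^{m}=\sum_{j\ge m}z^{j}/(j+1)$ one gets that $C^{*}f=\lambda f$, with $f=\sum_n a_n z^n$, is the system $\sum_{j\ge m}a_j/(j+1)=\lambda a_m$; subtracting consecutive equations telescopes it to $a_{m+1}=(1-\tfrac1{\lambda(m+1)})a_m$, so $a_m=a_0\prod_{k=1}^m(1-\tfrac1{\lambda k})$, which are the coefficients of $(1-z)^{1/\lambda-1}$. Conversely, when $f\in H^2$ the tails converge and $a_m\to 0$, so the telescoped recurrence already forces the original system; hence the eigenvectors are exactly these sequences that lie in $\ell^2$. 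Since $\sum_{k\le m}\log(1-\tfrac1{\lambda k})=-\tfrac1\lambda\log m+O(1)$, we have $|a_m|\asymp m^{-\re(1/\lambda)}$, so the $\ell^2$ condition is $2\re(1/\lambda)>1$, i.e.\ $|\lambda-1|<1$ (the values $\lambda=1,\tfrac12,\tfrac13,\dots$ where the product terminates, and $\lambda=0$, are checked directly and fit this). This is (e); and since $\sigma(C^{*})$ is closed and contains this disk, and $\sigma(C)$ is its complex conjugate, we also get $\sigma(C)\supseteq\{z:|z-1|\le1\}$ — in particular $0,2\in\sigma(C)$.

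For (d): if $Cf=\lambda f$ then $\int_0^z f(\xi)/(1-\xi)\,d\xi=\lambda z f(z)$, and differentiating gives the ODE $\lambda z(1-z)f'=(1-\lambda+\lambda z)f$. Integrating by partial fractions, a nonzero solution must be $f(z)=c\,z^{1/\lambda-1}(1-z)^{-1/\lambda}$; analyticity on $\D$ forces $1/\lambda-1\in\{0,1,2,\dots\}$, i.e.\ $\lambda=1/(n+1)$, and then $f(z)=c\,z^n(1-z)^{-(n+1)}\notin H^2$ since $(1-z)^{-s}\in H^2$ only for $\re s<\tfrac12$ and multiplication by $z^n$ is isometric. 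As $Cf=0$ forces $f=0$, this gives $\sigma_p(C)=\varnothing$.

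The main obstacle is (a), $\|I-C\|=1$, i.e.\ $(I-C)^{*}(I-C)\le I$, equivalently $C+C^{*}\ge C^{*}C$; the substantive half is $\|I-C\|\le 1$, which I would prove by passing to $C^{-1}$. Since $C(z^m-z^{m+1})=z^m/(m+1)$, the operator $C^{-1}$ acts on polynomials as $(I-S)W$ with $S$ the shift and $W=\diag(1,2,3,\dots)$; hence $C^{-1}+(C^{-1})^{*}-I$ is the symmetric tridiagonal matrix with diagonal $2m+1$ and off-diagonals $-(m+1)$, which is weakly diagonally dominant ($2m+1=m+(m+1)$), so $\langle(C^{-1}+(C^{-1})^{*}-I)g,g\rangle\ge 0$ for every polynomial $g$ (bound $2\re\sum_m(m+1)\bar g_m g_{m+1}\le\sum_m(m+1)(|g_m|^2+|g_{m+1}|^2)$). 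Now $C$ maps the dense subspace $(1-z)\cdot\{\text{polynomials}\}$ into the polynomials, so for $f=(1-z)q$ there, with $g:=Cf$ (a polynomial, $C^{-1}g=f$), we get $\|f\|^2-\|(I-C)f\|^2=2\re\langle Cf,f\rangle-\|Cf\|^2=\langle(C^{-1}+(C^{-1})^{*}-I)g,g\rangle\ge 0$, so $\|I-C\|\le1$ by density; the reverse inequality is free, since $0\in\sigma(C)$ gives $1\in\sigma(I-C)$. I expect the only real care needed to be the bookkeeping with the unbounded $C^{-1}$, which is precisely why one restricts to the dense subspace $(1-z)\cdot\{\text{polynomials}\}$. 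Granting (a), part (b) follows from $2\le r(C)\le\|C\|\le\|I\|+\|I-C\|=2$, and (c) from $\sigma(C)=1+\sigma(I-C)\subseteq 1+\{|z|\le1\}=\{z:|z-1|\le1\}$ combined with the reverse inclusion from (e).
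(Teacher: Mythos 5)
Your proposal is correct. Note, however, that the paper does not prove this proposition at all: it is quoted from Brown--Halmos--Shields \cite{MR187085}, so there is no internal argument to compare against; what you have written is a self-contained reconstruction. Parts (d) and (e) follow the classical route (the ODE $\lambda z(1-z)f'=(1-\lambda+\lambda z)f$ for $C$, and the telescoped recurrence $a_{m+1}=(1-\tfrac{1}{\lambda(m+1)})a_m$ for $C^*$, with the $\ell^2$ test $\re(1/\lambda)>\tfrac12\iff|\lambda-1|<1$), and your two-way argument — including the observation that $T_m-\lambda a_m$ is constant and tends to $0$, so the recurrence recovers the full system — is complete; the terminating cases $\lambda=1/k$ and $\lambda=0$ are handled. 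The genuinely nonstandard step is (a): rather than verifying $2\re\langle Cf,f\rangle\geq\|Cf\|^2$ directly, you pass to $g=Cf$ on the dense subspace $(1-z)\C[z]$ (where $C^{-1}=(I-S)W$ acts polynomially) and reduce to positivity of the Hermitian tridiagonal matrix with diagonal $2m+1$ and off-diagonal entries $-(m+1)$; the row sums $m+(m+1)=2m+1$ give exact weak diagonal dominance, and your AM--GM bound on $2\re\sum(m+1)\bar g_mg_{m+1}$ makes the positivity rigorous on finitely supported vectors, after which density and continuity of $f\mapsto\|f\|^2-\|(I-C)f\|^2$ finish the upper bound. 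The lower bounds $\|I-C\|\geq1$ and $\|C\|\geq2$ correctly come from $\overline{D(1,1)}\subset\sigma(C)$, which you have already secured via (e) and closedness of the spectrum, so the logical order (e), (d), (a), (b), (c) is non-circular. This is a clean and complete proof; the only cosmetic slip is writing $\sigma(C)=1+\sigma(I-C)$ where $1-\sigma(I-C)$ is meant, which is harmless since $\overline{\D}$ is symmetric.
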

Here, $\sigma(C)$ denotes the spectrum of $C$ and $\sigma_p(C)$ the point spectrum (eigenvalues of $C$).

The Ces\`{a}ro operator is hyponormal, namely, $C^{*} C - C C^{*} \geq 0$ \cite[Thm.~3]{MR187085} (see also \cite{MR1291108}). One of the gems in the study of the Ces\`{a}ro operator, and to be explored further in this paper,  is a theorem of Kriete and Trutt \cite{MR281025} which extends the above hyponormality  result and says that $I - C$ is unitarily equivalent to the operator $M_z f = z f$ (multiplication by the independent variable $z$) on $\mathcal{H}^2(\mu)$, where $\mu$ is a certain positive finite Borel measure on $\overline{\D}$, the closure of $\D$, and $\mathcal{H}^2(\mu)$ denotes the closure of the analytic polynomials $\C[z]$ in $L^2(\mu)$. This shows that the Ces\`{a}ro operator is subnormal (a normal operator restricted to one of its invariant subspaces).

A follow up paper of Kriete and Trutt \cite{MR350489} began a discussion of some of the complexities of the invariant subspaces of the Ces\`{a}ro operator via a discussion of the $M_z$-invariant subspaces of $\mathcal{H}^2(\mu)$. We find this space quite interesting and this paper will continue this line of inquiry.
A recent paper \cite{GP} of the first two authors of this current  paper rekindled the discussion of the complexities of the Ces\`{a}ro invariant subspaces from the vantage point  of semigroups of composition operators and of translation operators $f(x) \mapsto f(x - t)$ on a certain weighted $L^2$ space of the real line $\R$.

The purpose of this paper is to continue this invariant subspace discussion.  One particularly interesting class of invariant subspaces of the Ces\`{a}ro operator will be the model spaces $(u_{\alpha} H^2)^{\perp}$, where $\alpha > 0$ and $u_{\alpha}$ is the atomic inner function
\[
u_{\alpha}(z) = \exp\Big(\alpha \frac{z + 1}{z - 1}\Big).
\]
Using semigroup ideas from \cite{GP}, along with a Beurling--Lax theorem developed in that same paper, we will show in Theorem \ref{thm:ualphinv} that indeed  the model space $(u_{\alpha} H^2)^{\perp}$, where $\alpha > 0$, is an invariant subspace of the Ces\`{a}ro operator. Moreover, in Theorem \ref{thm:onlymodel} we will show that $(u_{\alpha} H^2)^{\perp}$, $\alpha > 0$, are the {\em only} model spaces which are Ces\`{a}ro invariant. One can also arrive at these two results using a discussion in \cite{MR2445578} involving the Volterra operator on $H^2$. In  Proposition \ref{cyclicCCC} we prove that the vector
$$\frac{1 - u_{\alpha}(z)}{1 + z}$$ belongs to $(u_{\alpha} H^2)^{\perp}$ and is a cyclic vector for the Ces\`{a}ro operator  when restricted to $(u_{\alpha} H^2)^{\perp}$.  In Proposition \ref{nosthnehrenaasdd} we prove that these model spaces $(u_{\alpha} H^2)^{\perp}$ correspond to a curious class of $M_z$-invariant subspaces of the associated Kriete--Trutt space $\mathcal{H}^2(\mu)$ that are different from the ``standard'' Beurling-type ones consisting of the closure of $uH^2$ in $\mathcal{H}^2(\mu)$ (as explored in \cite{MR350489}), where $u$ is an inner function.

An integral calculation will verify that the adjoint $C^{*}$ of the Ces\`{a}ro operator on $H^2$ is given by the integral formula
\begin{equation}\label{Cstar}
(C^{*} f)(z) = \frac{1}{1 - z} \int_{z}^{1} f(\xi) d \xi.
\end{equation}
A semigroup discussion in \cite{GP} characterized the finite dimensional $C^{*}$-invariant subspaces (equivalently the finite co-dimensional $C$-invariant subspaces) in the setting of translation invariant subspaces of a certain weighted $L^2$ space on $\R$. In Theorem \ref{thm:fdcstar} we use this discussion to recast this characterization in the $H^2$ setting and show that every finite dimensional $C^{*}$-invariant subspace is the span of finite unions of the functions
$$(1 - z)^{\mu} (\log(1 - z))^{j}, \quad 0 \leq j \leq k,$$
where $\Re \mu > -\tfrac{1}{2}$ and $k \in \N_0$ is fixed. We will prove this in \S \ref{firstysydydsysdf}, using some semigroup techniques from \cite{GP}  (see Theorem \ref{thm:fincodviakt} below for a different approach).

In \S \ref{Careadsdfds} we establish  a connection between the Ces\`{a}ro operator and the concept of a universal operator in the sense of Rota. In Theorem  \ref{uuuuniverereersal} we show that although $C^{*}$ is not universal, there is a bounded analytic function $F$ on the disk $\{z: |z - 1| < 1\}$ such that $F(C^{*})$ is universal. We prove this by using an interesting result from \cite{MR4373152} which says that certain linear translates $C_{\phi} - \lambda I$ of a class of composition operators $C_{\phi}$ on $H^2$ are universal in the sense of Rota.

Through the results in this paper, we hope to make the case that the Kriete--Trutt space $\mathcal{H}^2(\mu)$ is an important Hilbert space of analytic functions on $\D$ that is very much worthy of further study. It has a rich variety of ``nonstandard'' $M_z$-invariant subspaces and the overall complexity of these invariant subspaces is yet unknown.

\section{Semigroups of operators}\label{two}

For each  $t \geq 0$ let
$$\phi_{t}(z) := e^{-t} z + 1 - e^{-t}, \quad z \in \D.$$
These are analytic self maps of the open unit disk $\D$ and $\phi_{t}(\D)$ is an internally tangent disk at $\xi = 1$ (see Figure \ref{Figure_flow}). This collection of maps $\{\phi_t\}_{t \geq 0}$ defines a {\em holomorphic flow} in that
\begin{enumerate}
\item[(i)] $\phi_0(z) = z$ for all $z \in \D$;
\item[(ii)] $\phi_{t + s} = \phi_t \circ \phi_s$ for all $s, t \geq 0$ and all $z \in \D$; and
\item[(iii)] for any fixed $s \geq 0$ and any $z \in \D$, ${\displaystyle \lim_{t \to s} \phi_{t}(z) = \phi_s(z)}$.
\end{enumerate}
\begin{figure}
\begin{center}
 \includegraphics[width=.6\textwidth]{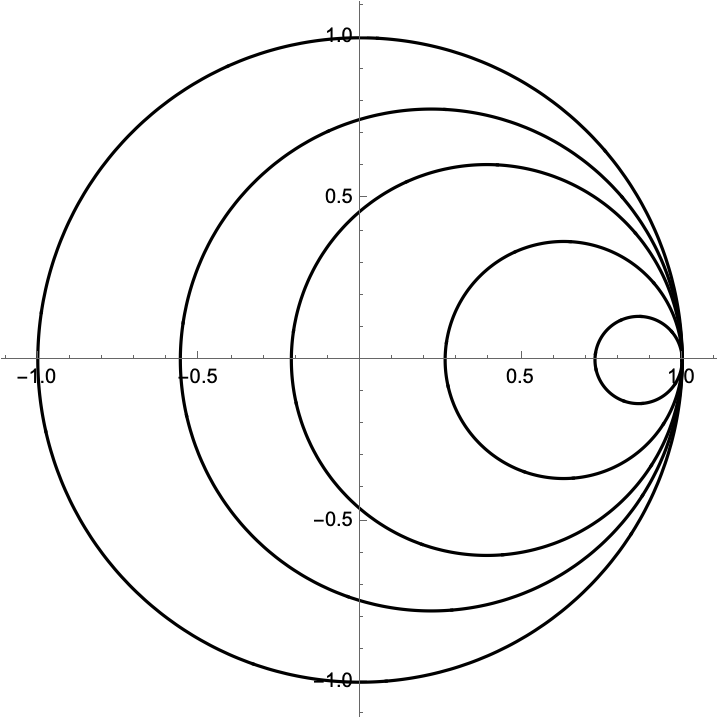}
 \caption{The circles $\phi_{t}(\T)$ for the maps $\phi_{t}(z) = e^{-t} z + 1 - e^{-t}.$ The circles get smaller as $t \to \infty$.}
 \label{Figure_flow}
 \end{center}
\end{figure}

The Littlewood subordination theorem \cite[p.~13]{MR1237406} implies that the composition operator
$$C_{\phi_t}: H^2 \to H^2, \quad C_{\phi_t} f = f \circ \phi_t,$$ defines a bounded linear operator which satisfies  $\|C_{\phi_t}\| = e^{t/2}$  \cite[Thm.~9.4]{MR1397026} and $\sigma(C_{\phi_t})  = \{z: |z| \leq e^{t/2}\}$ \cite[Thm.~7.26]{MR1397026}. In fact, one can quickly verify that the collection $\{C_{\phi_t}\}_{t \geq 0}$ defines a strongly continuous (or $C_0$) semigroup of operators on $H^2$ in that
\begin{enumerate}
\item[(i)] $C_{\phi_0} = I$;
\item[(ii)] $C_{\phi_t} C_{\phi_s} = C_{\phi_{s + t}}$ for all $s, t \geq 0$; and
\item[(ii)] ${\displaystyle \lim_{t \to 0^{+}} C_{\phi_t} f = f}$ for all $f \in H^2$.
\end{enumerate}
Using an analysis of the generator and co-generator of the $C_0$ semigroup $\{C_{\phi_t}\}_{t \geq 0}$, the authors in   \cite[Cor.~2.3]{GP} connect the above composition operators with the adjoint $C^{*}$ of the Ces\`{a}ro operator from \eqref{Cstar} by means of the following formula:
$$(C^{*} f)(z) = \int_{0}^{\infty} e^{-t} (C_{\phi_t} f)(z) dt.$$
The formula above has the following important consequence which is key to some of our results concerning the invariant subspaces of the Ces\`{a}ro operator.

\begin{Theorem}\label{wydfgviuc8d7s6ed7sfv6}
A closed subspace $\mathcal{M}$ of $H^2$ satisfies $C \mathcal{M} \subset \mathcal{M}$ if and only if $C_{\phi_t} \mathcal{M}^{\perp} \subset \mathcal{M}^{\perp}$ for all $t \geq 0$.
\end{Theorem}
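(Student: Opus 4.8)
The key identity just before the theorem, $(C^* f)(z) = -\int_0^\infty e^{-t}(C_{\phi_t}f)(z)\,dt$, expresses $C^*$ as a weak (Bochner) integral of the semigroup $\{C_{\phi_t}\}_{t\ge 0}$. The natural route is therefore: first prove the statement for the \emph{adjoint} relation, namely that $\mathcal{N}$ is $C_{\phi_t}$-invariant for all $t\ge 0$ if and only if $\mathcal{N}$ is $C^*$-invariant, and then pass to orthogonal complements. Indeed, for a closed subspace $\mathcal{M}$ we have $C\mathcal{M}\subset\mathcal{M}$ iff $C^*\mathcal{M}^\perp\subset\mathcal{M}^\perp$, and $C_{\phi_t}\mathcal{M}^\perp\subset\mathcal{M}^\perp$ for all $t\ge0$ is the condition appearing in the statement; so it suffices to show, for an arbitrary closed subspace $\mathcal{N}$ (which will be $\mathcal{M}^\perp$), that
\[
C_{\phi_t}\mathcal{N}\subset\mathcal{N}\ \text{ for all } t\ge 0 \quad\Longleftrightarrow\quad C^*\mathcal{N}\subset\mathcal{N}.
\]

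\textbf{The easy direction.} Suppose $C_{\phi_t}\mathcal{N}\subset\mathcal{N}$ for every $t\ge 0$. Fix $f\in\mathcal{N}$. Since $\mathcal{N}$ is closed, hence weakly closed, and $C^*f = -\int_0^\infty e^{-t} C_{\phi_t}f\,dt$ is a convergent vector-valued integral whose integrand lies in $\mathcal{N}$ for a.e.\ (in fact every) $t$, the integral lies in $\mathcal{N}$ as well. Concretely: for any $g\perp\mathcal{N}$ one has $\langle C^*f,g\rangle = -\int_0^\infty e^{-t}\langle C_{\phi_t}f,g\rangle\,dt = 0$ because each inner product in the integrand vanishes; hence $C^*f\in\mathcal{N}$. (One should note the integral converges in norm: $\|C_{\phi_t}\| = e^{t/2}$, so $\|e^{-t}C_{\phi_t}f\|\le e^{-t/2}\|f\|$, which is integrable on $[0,\infty)$.)

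\textbf{The harder direction, and the main obstacle.} Conversely, assume $C^*\mathcal{N}\subset\mathcal{N}$; we must recover invariance of $\mathcal{N}$ under each $C_{\phi_t}$. This is where the real work lies, since we are inverting a Laplace-transform-type relation. The idea is to relate $C^*$ to the co-generator of the semigroup. If $A$ denotes the infinitesimal generator of $\{C_{\phi_t}\}$, then the formula $(C^*f)(z) = -\int_0^\infty e^{-t}(C_{\phi_t}f)(z)\,dt$ identifies $-C^*$ with the resolvent $(I-A)^{-1}$ (evaluated at the point $1$ in the resolvent set); equivalently $A = I + (C^*)^{-1}$ on the appropriate domain, and the co-generator is $V = (A+I)(A-I)^{-1}$, a bounded operator, expressible as a M\"obius-type function of $C^*$. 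A subspace invariant under $C^*$ is then invariant under its inverse (on the range), hence under $A$, hence under the co-generator $V$; and by the standard theory of $C_0$-semigroups a closed subspace is invariant under all $C_{\phi_t}$ if and only if it is invariant under the co-generator $V$ (the co-generator generates the semigroup via the Hille--Phillips functional calculus / the formula $C_{\phi_t} = e^{t(V+I)(V-I)^{-1}}$). This last equivalence — invariance under the bounded co-generator is equivalent to invariance under the whole semigroup — is the crux, and is presumably the point the authors extract from \cite{GP}; alternatively one can argue directly that $\mathcal{N}$ invariant under the resolvent $(\lambda I - A)^{-1}$ for one $\lambda$ forces invariance under $(\lambda I-A)^{-1}$ for all $\lambda$ in a half-plane (by the resolvent identity and analytic continuation / Neumann series), and then recover $C_{\phi_t}$ by the inverse Laplace transform $C_{\phi_t} = \lim \big(\tfrac{n}{t}(\tfrac{n}{t}I - A)^{-1}\big)^n$ (the Post--Widder / Euler formula), whose iterates all preserve $\mathcal{N}$. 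Either way, the delicate step is the passage from a \emph{single} resolvent (that is, from $C^*$ alone) back to the whole one-parameter family, and this is the part I expect to require the most care.
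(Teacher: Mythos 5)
The paper does not actually prove this theorem in the text: it is quoted as a consequence of the formula $(C^{*}f)(z)=-\int_{0}^{\infty}e^{-t}(C_{\phi_t}f)(z)\,dt$ established in \cite{GP}, i.e.\ of the fact that $-C^{*}$ is the resolvent $(I-A)^{-1}$ of the generator $A$ of the semigroup $\{C_{\phi_t}\}_{t\geq 0}$. Your reconstruction is exactly this argument: the reduction to $\mathcal{N}=\mathcal{M}^{\perp}$ is right, the easy direction (closedness of $\mathcal{N}$ plus norm convergence of the integral, using $\|e^{-t}C_{\phi_t}f\|\leq e^{-t/2}\|f\|$) is correct, and for the converse the second of your two suggested routes --- invariance under the single resolvent $R(1,A)=-C^{*}$ propagates to $R(\lambda,A)$ for all $\lambda$ in the half-plane $\{\operatorname{Re}\lambda>\tfrac12\}$ via the Neumann series $R(\lambda)=\sum_{n\geq 0}(\lambda_0-\lambda)^nR(\lambda_0)^{n+1}$ together with an open-closed/connectedness argument, and then to the semigroup via the Post--Widder/Euler formula $C_{\phi_t}f=\lim_n\bigl(\tfrac{n}{t}R(\tfrac{n}{t},A)\bigr)^nf$ --- is complete and is the standard proof underlying \cite{GP}.

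One caution: the first route you sketch contains a step that does not work as stated. From $C^{*}\mathcal{N}\subset\mathcal{N}$ you cannot conclude that $\mathcal{N}$ is ``invariant under its inverse on the range'': invariance of a closed subspace under an injective operator does not pass to the inverse (e.g.\ $zH^2$ is invariant under the shift $S$ but $S^{-1}$ maps it onto all of $H^2$). Consequently the chain ``invariant under $C^{*}$ $\Rightarrow$ invariant under $A$ $\Rightarrow$ invariant under the co-generator'' is not justified; the correct notion of $A$-invariance here (namely $R(\lambda,A)\mathcal{N}\subset\mathcal{N}$ for all large $\lambda$) is precisely what your resolvent-identity argument supplies, so you should present that version as the proof rather than as an ``alternative.''
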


We pause to mention the papers \cite{MR0733903, MR0897683, MR1021904}, which also use semigroups to obtain information about the Ces\`{a}ro operator. 

In a way, Theorem \ref{wydfgviuc8d7s6ed7sfv6}  is a Beurling--Lax theorem for the Ces\`{a}ro operator, reminiscent of the Beurling--Lax theorem for the invariant subspaces for the semigroup of shifts $f(x) \mapsto f(x - t)$, $t \in \R$, on $L^2(\R)$ \cite[p.~204]{MR3890074}.

There is another associated $C_0$ semigroup of shift operators from \cite{GP} which will play an important role in our discussion. Let $\C^{+}$ denote the right half plane
$$\C^{+}: = \{z: \Re z > 0\}$$ and let $H^2(\C^{+})$ denote the {\em Hardy space of $\C^{+}$} \cite{Garnett, MR3890074}. These are the analytic functions $F$ on $\C^{+}$ for which
$$\|F\|_{H^2(\C^{+})} := \Big(\sup_{0 < x < \infty} \int_{-\infty}^{\infty} |F(x + i y)|^2 dy\Big)^{\frac{1}{2}} < \infty.$$

\begin{Remark}
As to not be overly pedantic, we will use the notation $H^2$ (and not $H^2(\D)$) to denote the Hardy space of the disk $\D$. 
\end{Remark}

By a version of the Paley--Wiener theorem \cite[p.~203]{MR3890074}, every $F \in H^2(\C^{+})$ can be realized as
\begin{equation}\label{Laplace}
F(s) = (\mathcal{L} f) (s) := \frac{1}{\sqrt{2 \pi}} \int_{0}^{\infty} f(x) e^{-s x} dx, \quad s \in \C^{+},
\end{equation}
(the normalized Laplace transform) where $f \in L^2(\R_{+})$ (and $\R_{+} = (0, \infty)$). Furthermore,
$$\|F\|_{H^2(\C^{+})}^{2} =  \int_{0}^{\infty} |f(x)|^2 dx.$$
Conversely, $\mathcal{L} f \in H^2(\C^+)$ whenever $f \in L^2(\R_{+})$.
In other words, the normalized Laplace  transform $\mathcal L$  is a unitary operator from $L^2(\R_{+})$ onto $H^2(\C^{+})$.

The function
\begin{equation}\label{gammaC}
\gamma(z) = \frac{1 + z}{1 - z}, \quad z \in \D,
\end{equation} is a conformal map from $\D$ onto $\C^{+}$ which induces the unitary operator $\mathcal{U}: H^2 \to H^2(\C^{+})$ given by

\begin{equation}\label{UUU}
\mathcal{U} g(s)= \frac{1}{\sqrt\pi (1+s)}g \left(\frac{s-1}{s+1}\right), \qquad g \in H^2(\D)
\end{equation}
with
\begin{equation}\label{UUUinverse}
\mathcal{U}^{-1}G(z)= \frac{2\sqrt \pi}{1-z} G \left( \frac{1+z}{1-z}\right), \qquad G \in H^2(\C^+).
\end{equation}

Moreover, as discussed in \cite[Lemma 2.1]{MR2009562}, if $\psi$ is an analytic self map of $\C^{+}$ such that $C_{\psi} F = F \circ \psi$ defines a bounded composition operator on $H^2(\C^{+})$ (and this happens if and only if
$\psi(\infty) = \infty$ and
$$\lim_{z \to \infty} \frac{z}{\psi(z)}$$ exists and is finite) \cite{MR2966995, MR2390678}), then with
$$\Phi(z) := \gamma^{-1} \circ \psi \circ \gamma,$$
which defines an analytic self map of $\D$, the linear transformation
$$(\mathcal{L}_{\Phi} f)(z) := \frac{1 - \Phi(z)}{1 - z} f(\Phi(z))$$ defines a weighted composition operator on $H^2$ and
$$\mathcal{U}^{-1} C_{\psi} \mathcal{U} = \mathcal{L}_{\Phi}.$$
Applying this discussion to the family of functions defined on $\C^{+}$ by
$$\psi_{t}(s) = e^{t} s + (e^{t} - 1), \quad t \geq 0,$$
which are analytic self maps of $\C^{+}$ (see Figure \ref{Figure_Lines}) that induce bounded composition operators $C_{\psi_t}$ on $H^2(\C^{+})$, a discussion in \cite{MR4373152} shows that
$$\mathcal{U} C_{\phi_t} \mathcal{U}^{-1} = e^{t} C_{\psi_t} \quad \mbox{for all $t \geq 0$}.$$
\begin{figure}
\begin{center}
 \includegraphics[width=.6\textwidth]{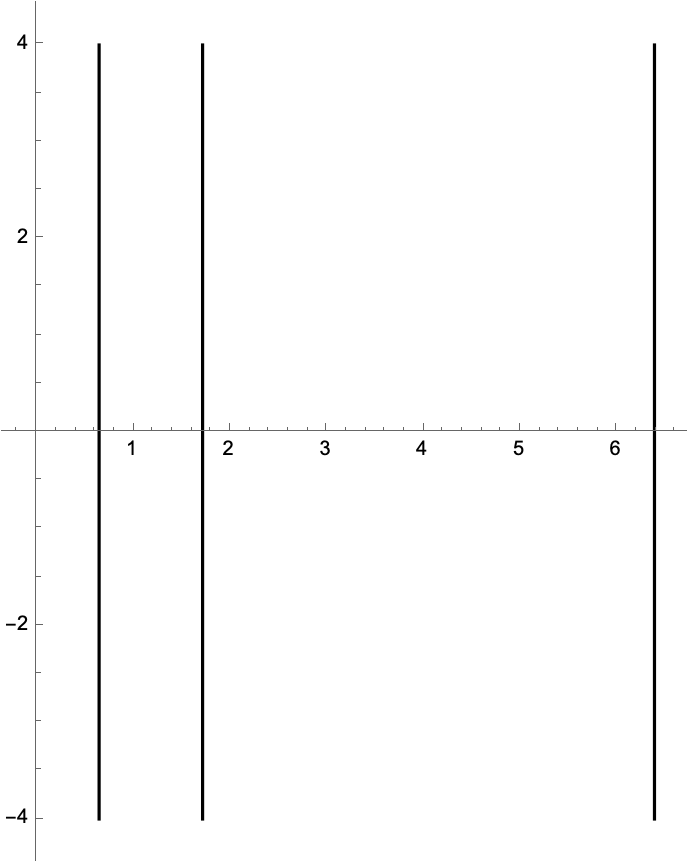}
 \caption{The lines $\psi_{t}(\{\Re s = 0\})$ for the maps $\psi_{t}(s) = e^{t} s + (e^t - 1)$. These lines drift off to infinity as $t \to \infty$.}
 \label{Figure_Lines}
 \end{center}
\end{figure}
Now consider the semigroup $\{V_t\}_{t \geq 0}$ of operators on $L^2(\R_{+})$ given by
$$(V_{t} g)(x) := e^{-t} e^{-(1 - e^{-t}) x} g(e^{-t} x), \quad t \geq 0.$$
Using the unitary operator
\begin{equation}\label{TTTTT}
T: L^2(\R) \to L^2(\R_{+}), \quad (T h)(x) := \frac{1}{\sqrt{x}} h(\log x), \quad x > 0,
\end{equation}
(for which $T^{-1} g(y) = e^{y/2} g(e^y)$), one sees that
$$(T^{-1} V_{t} T h)(y) = e^{-t/2} e^{-(1 - e^{-t}) e^{y}} h(y - t), \quad y \in \R.$$

Finally, with the weight function
\begin{equation}\label{weight}
w(y) := e^{-2 (e^y - 1)}
\end{equation}
(see Figure \ref{Figure_weight}), one can show that the operator
\begin{equation}\label{WWW}
W: L^2(\R) \to L^2(\R, w(y) dy), \quad (W h)(y) := \frac{h(y)}{\sqrt{w(y)}},
\end{equation}
 is unitary and  a calculation from \cite[Prop.~2.4]{GP} shows that
$$(W \sigma_{t} W^{-1} f)(y) = f(y - t), \quad f \in L^2(\R, w(y) dy), \quad t > 0.$$ This gives us the following result from \cite[Prop.~2.4]{GP}.

\begin{Proposition}\label{sijgfoishiftTTT}
The semigroup $\{\sigma_t\}_{t \geq 0}$ on $L^2(\R)$ given by
$$(\sigma_{t} h)(y) := e^{-(1 - e^{-t}) e^{y}} h(y - t), \quad y \in \R,$$
is unitarily equivalent, via
$$\mathfrak{F} := W T^{-1} \mathcal{L}^{-1} \mathcal{U}: H^2 \to L^2(\R, w(y) dy),$$ to the semigroup
$$(S_{t} f)(y) := f(y - t)$$ acting on $L^2(\R, w(y) dy)$.
\end{Proposition}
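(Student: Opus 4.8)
The plan is to read off Proposition~\ref{sijgfoishiftTTT} from the chain of unitary identifications already assembled in this section; essentially nothing remains beyond bookkeeping and one exponential cancellation. First I would note that each factor of $\mathfrak{F}$ is unitary: $\mathcal{U}: H^2 \to H^2(\C^{+})$ by \eqref{UUU}--\eqref{UUUinverse}; the normalized Laplace transform $\mathcal{L}: L^2(\R_{+}) \to H^2(\C^{+})$ (hence $\mathcal{L}^{-1}$) by the Paley--Wiener theorem, as recorded after \eqref{Laplace}; $T: L^2(\R) \to L^2(\R_{+})$ (hence $T^{-1}$) by the one-line substitution $x = e^{y}$ in \eqref{TTTTT}; and $W: L^2(\R) \to L^2(\R, w(y)\,dy)$ by construction in \eqref{WWW}. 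Hence the composition $\mathfrak{F} = W T^{-1}\mathcal{L}^{-1}\mathcal{U}: H^2 \to L^2(\R, w(y)\,dy)$ is unitary.

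The only genuine computation is the intertwining $W\sigma_t W^{-1} = S_t$ on $L^2(\R, w(y)\,dy)$. Since $(W^{-1}f)(y) = \sqrt{w(y)}\,f(y)$, applying $\sigma_t$ and then $W$ yields
\begin{equation*}
(W\sigma_t W^{-1}f)(y) = \frac{\sqrt{w(y-t)}}{\sqrt{w(y)}}\, e^{-(1-e^{-t})e^{y}}\, f(y-t) ,
\end{equation*}
and since $w(y) = e^{-2(e^{y}-1)}$ we get $w(y-t)/w(y) = e^{2e^{y}(1-e^{-t})}$, so the weight ratio equals exactly $e^{(1-e^{-t})e^{y}}$ and cancels the factor $e^{-(1-e^{-t})e^{y}}$, leaving $(W\sigma_t W^{-1}f)(y) = f(y-t) = (S_t f)(y)$. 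This is the calculation behind \cite[Prop.~2.4]{GP}, and I expect this cancellation --- together with keeping the normalizing constants straight throughout the chain --- to be the main, and essentially the only, point of substance.

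The structural assertions then follow formally. The family $\{S_t\}_{t\ge0}$ is clearly a $C_0$ semigroup of contractions on $L^2(\R, w(y)\,dy)$: since $w$ is continuous, positive, and satisfies $w(y+t)\le w(y)$ for $t\ge0$, translation is a contraction there, and strong continuity holds by the usual density argument. Transporting this back through the unitary $W$ gives the same for $\{\sigma_t\}$; alternatively one checks directly that the exponents telescope, $-(1-e^{-t})e^{y} - (1-e^{-s})e^{y-t} = -(1-e^{-(t+s)})e^{y}$, so that $\sigma_t\sigma_s = \sigma_{t+s}$, with $\sigma_0 = I$ and $e^{-(1-e^{-t})e^{y}}\le 1$. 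Finally, to make explicit the role of the full map $\mathfrak{F}$, I would chain the intermediate conjugations in this section: $\mathcal{U}C_{\phi_t}\mathcal{U}^{-1}=e^{t}C_{\psi_t}$, then $\mathcal{L}^{-1}C_{\psi_t}\mathcal{L}=V_t$ (a substitution $u=e^{t}x$ in the Laplace integral), then $(T^{-1}V_tT h)(y)=e^{-t/2}e^{-(1-e^{-t})e^{y}}h(y-t)$. Multiplying these out shows that $T^{-1}\mathcal{L}^{-1}\mathcal{U}$ conjugates the $C_0$ contraction semigroup $\{e^{-t/2}C_{\phi_t}\}$ on $H^2$ precisely to $\{\sigma_t\}$ on $L^2(\R)$, whence $\mathfrak{F}$ conjugates $\{e^{-t/2}C_{\phi_t}\}$ to $\{S_t\}$. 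The only care needed there is tracking the $e^{t}$ and $e^{t/2}$ factors so that they combine to the stated normalization.
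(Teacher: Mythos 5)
Your proposal is correct and follows essentially the same route the paper takes: it verifies the chain of unitaries $\mathcal{U}$, $\mathcal{L}$, $T$, $W$, reduces everything to the single cancellation $\sqrt{w(y-t)/w(y)} = e^{(1-e^{-t})e^{y}}$ against the factor $e^{-(1-e^{-t})e^{y}}$, and chains the intermediate conjugations ($e^{t}C_{\psi_t}$, $V_t$, $T^{-1}V_tT$) exactly as in the surrounding discussion drawn from \cite[Prop.~2.4]{GP}. The computations (including the telescoping exponent giving $\sigma_t\sigma_s=\sigma_{t+s}$ and the $e^{\pm t/2}$ bookkeeping) all check out.
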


Putting this all together, yields the following result from \cite[Thm.~2.5]{GP}.

\begin{Theorem}
A closed subspace $\mathcal{M}$ of $H^2$ is invariant for the Ces\`{a}ro operator if and only if the closed subspace $\mathfrak{F} \mathcal{M}^{\perp}$ of $L^2(\R, w(y)dy)$ satisfies $S_{t} (\mathfrak{F} \M^{\perp}) \subset \mathfrak{F} \mathcal{M}^{\perp}$ for all $t \geq 0$.
\end{Theorem}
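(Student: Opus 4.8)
The proof is essentially a concatenation of two results already established in Section~\ref{two}, so the plan is short. By Theorem~\ref{wydfgviuc8d7s6ed7sfv6}, a closed subspace $\M\subset H^2$ is invariant for $C$ if and only if its orthogonal complement $\M^\perp$ is invariant under every composition operator $C_{\phi_t}$, $t\geq 0$. Thus the statement reduces to transporting the family $\{C_{\phi_t}\}_{t\geq 0}$, acting on $\M^\perp$, across to $L^2(\R,w(y)\,dy)$ by means of the unitary $\mathfrak{F}=WT^{-1}\mathcal{L}^{-1}\mathcal{U}$ and identifying what it becomes there.

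For this I would first record that $\mathfrak{F}$ is unitary, being a composition of the unitary maps $\mathcal{U}$, $\mathcal{L}^{-1}$, $T^{-1}$, $W$ of Section~\ref{two}; hence it carries closed subspaces of $H^2$ bijectively onto closed subspaces of $L^2(\R,w(y)\,dy)$ and sends the lattice of $\{C_{\phi_t}\}$-invariant subspaces onto the lattice of $\{\mathfrak{F}C_{\phi_t}\mathfrak{F}^{-1}\}$-invariant subspaces. Then I would assemble, without grinding through the arithmetic, the intertwining relations already isolated in the text — $\mathcal{U}C_{\phi_t}\mathcal{U}^{-1}=e^{t}C_{\psi_t}$, the Laplace-transform identity $\mathcal{L}^{-1}C_{\psi_t}\mathcal{L}=V_t$, the displayed formula for $T^{-1}V_tT$, and the identity $W\sigma_tW^{-1}=S_t$ underlying Proposition~\ref{sijgfoishiftTTT} — and compose them. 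The outcome is that $\mathfrak{F}C_{\phi_t}\mathfrak{F}^{-1}$ is a strictly positive scalar multiple of the translation operator $S_t$ on $L^2(\R,w(y)\,dy)$; this is precisely the content of Proposition~\ref{sijgfoishiftTTT}, once one records the harmless rescaling relating $C_{\phi_t}$ to the $\sigma_t$ that appears there.

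The last point to make explicit is that the scalar is irrelevant to invariance: for any linear subspace $\mathcal{N}$ of $L^2(\R,w(y)\,dy)$ and any nonzero $c$ one has $(cS_t)\mathcal{N}=S_t\mathcal{N}$ as sets. Consequently, for each fixed $t\geq 0$,
\[
C_{\phi_t}\M^\perp\subset\M^\perp
\ \Longleftrightarrow\
\mathfrak{F}\bigl(C_{\phi_t}\M^\perp\bigr)\subset\mathfrak{F}\M^\perp
\ \Longleftrightarrow\
S_t\bigl(\mathfrak{F}\M^\perp\bigr)\subset\mathfrak{F}\M^\perp,
\]
where the first equivalence uses the injectivity of $\mathfrak{F}$ and the second uses the previous sentence together with $\mathfrak{F}C_{\phi_t}\mathfrak{F}^{-1}=(\text{scalar})\,S_t$. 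Since $\mathfrak{F}$ does not depend on $t$, this equivalence holds for all $t\geq 0$ simultaneously, and combining it with Theorem~\ref{wydfgviuc8d7s6ed7sfv6} gives the theorem. There is no genuine obstacle here; the only thing that takes a little care is the bookkeeping of the scalar factors (of the shape $e^{t}$ and $e^{t/2}$) accumulated along the chain $H^2\to H^2(\C^{+})\to L^2(\R_{+})\to L^2(\R)\to L^2(\R,w(y)\,dy)$, together with the trivial but essential observation that such factors leave the invariant subspaces of $S_t$ untouched.
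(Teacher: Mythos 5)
Your proposal is correct and is essentially the paper's own route: the paper simply asserts the theorem follows by "putting this all together," i.e.\ by combining Theorem \ref{wydfgviuc8d7s6ed7sfv6} with the unitary equivalence of Proposition \ref{sijgfoishiftTTT}, which is exactly what you do. Your explicit bookkeeping showing $\mathfrak{F}C_{\phi_t}\mathfrak{F}^{-1}=e^{t/2}S_t$ and the remark that the positive scalar is irrelevant to subspace invariance are the right (and only) points that need checking.
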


For the unweighted $L^2(\R)$ space, the closed subspaces $\mathcal{F}$ of $L^2(\R)$ for which $S_t \mathcal{F} \subset \mathcal{F}$ for all $t \geq 0$ are described by the Beurling--Lax theorem (see \cite[p.~204]{MR3890074}).
For a wide class of weights $v$ on $\R$, the same result is true for the $\{S_{t}\}_{t \geq 0}$-invariant subspaces of $L^2(\R, v(y) dy)$. However, for the weight $w$ from \eqref{weight}, there is a different lattice of $\{S_{t}\}_{t \geq 0}$- invariant subspaces. Some restrictions on these subspaces were known by Domar \cite[Eqn.~8]{MR0645094} and the paper \cite{GP} gives some specific examples. The complexity of the $\{S_{t}\}_{t \geq 0}$-invariant subspaces stems from the fact that
the weight $w$ is uniformly bounded above and below on every interval $(-\infty,a)$ but
decreases rapidly on $(a,\infty)$. Thus,
the space $L^2((-\infty,a), w(y)dy)$ can be seen as a renormed version of $L^2(-\infty,a)$ whereas $L^2((a,\infty),w(y)dy)$ is a much larger space than $L^2(a,\infty)$.
\begin{figure}
\begin{center}
 \includegraphics[width=.6\textwidth]{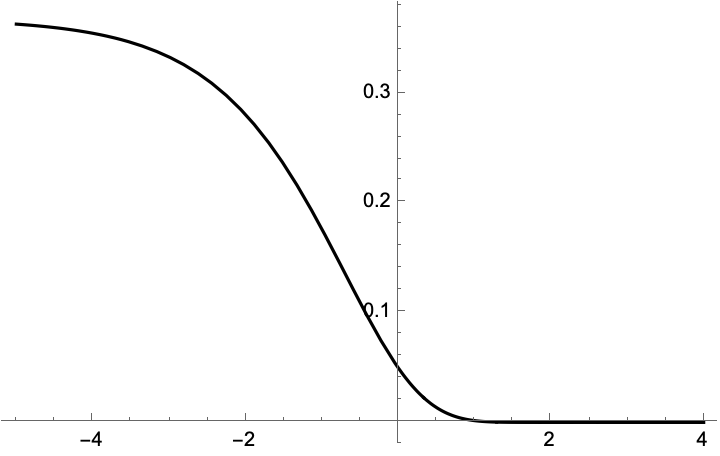}
 \caption{The weight function $w(y) = e^{-2 (e^y - 1)}$ from \eqref{weight}.}
 \label{Figure_weight}
 \end{center}
\end{figure}

\section{Finite co-dimensional invariant subspaces -- first proof}\label{firstysydydsysdf}

 An integral computation, using the fact that the adjoint $C^{*}$ of the Ces\`{a}ro operator $C$ on $H^2$ given by
\begin{equation}\label{97344987t389476345987}
 (C^{*} f)(z) = \frac{1}{1 - z} \int_{z}^{1} f(\xi) d \xi,
 \end{equation}
shows that the linear span of the functions
$$(1 - z)^{\mu} (\log(1 - z))^{j}, \quad 0 \leq j \leq k,$$
where $\Re \mu > -\tfrac{1}{2}$ and $ k \in  \N_{0} := \N \cup \{0\}$ is fixed, is a finite dimensional invariant  subspace for $C^{*}$. For example, the one dimensional space $$\operatorname{span} (1 - z)^{\mu}$$ is an eigenspace for $C^{*}$, with eigenvalue $(\mu+1)^{-1}$, while the spaces $$\operatorname{span}\{(1 - z)^{\mu}, (1 - z)^{\nu}\} \quad \mbox{and} \quad \operatorname{span}\{(1 - z)^{\mu},  (1 - z)^{\mu} \log(1 - z)\}$$ are two dimensional invariant subspaces for $C^{*}$. Theorem \ref{thm:fdcstar} below describes all of the finite dimensional $C^{*}$-invariant subspaces. 
There are several ways of proving this result, and we shall
use the semigroup techniques developed in \S \ref{two}. An alternate path to this result runs through  some techniques of Aleman \cite{MR1255271} (see Theorem \ref{thm:fincodviakt} below).

\begin{Theorem}\label{thm:fdcstar}
The finite dimensional $C^{*}$-invariant subspaces of $H^2$ are spans of finite unions of sets of the form
$$\{(1 - z)^{\mu} (\log(1 - z))^{j}, \quad 0 \leq j \leq k\},$$
where $\Re \mu > -\tfrac{1}{2}$ and $k \in \N_0$.
\end{Theorem}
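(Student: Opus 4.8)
The plan is to transfer the problem, via the unitary $\mathfrak{F} = W T^{-1} \mathcal{L}^{-1} \mathcal{U}\colon H^2 \to L^2(\R, w(y)\,dy)$ of Proposition~\ref{sijgfoishiftTTT}, to the question of describing the finite-dimensional subspaces $\mathcal{G}$ of $L^2(\R, w(y)\,dy)$ that are invariant for the shift semigroup $\{S_t\}_{t \ge 0}$, $(S_t f)(y) = f(y-t)$. By Theorem~\ref{wydfgviuc8d7s6ed7sfv6} (in its $\mathfrak{F}$-transported form stated just before \S\ref{firstysydydsysdf}), a closed subspace $\mathcal{M} \subset H^2$ is $C$-invariant iff $\mathfrak{F}\mathcal{M}^\perp$ is $\{S_t\}$-invariant; and since $\mathcal{M}$ has finite codimension iff $\mathcal{M}^\perp$ is finite-dimensional, the finite-dimensional $C^*$-invariant subspaces correspond exactly to the finite-dimensional $\{S_t\}$-invariant subspaces of $L^2(\R, w(y)\,dy)$. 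So the first step is: classify the finite-dimensional $\{S_t\}_{t\ge 0}$-invariant subspaces of $L^2(\R, w\,dy)$.

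The second step is the heart of the matter. A finite-dimensional subspace $\mathcal{G}$ invariant under the $C_0$-semigroup $\{S_t\}$ is invariant under its generator $-\frac{d}{dy}$ (differentiation makes sense on the finite-dimensional invariant subspace, where the semigroup restricts to a matrix semigroup $e^{-tD}$). Thus $\mathcal{G}$ is a finite-dimensional space of functions closed under $d/dy$, and classical linear ODE theory forces $\mathcal{G}$ to be spanned by functions of the form $y^j e^{-\lambda y}$ for finitely many exponents $\lambda \in \C$ and consecutive powers $0 \le j \le k_\lambda$ — i.e., $\mathcal{G}$ is a sum of "generalized eigenspaces" $\operatorname{span}\{y^j e^{-\lambda y} : 0 \le j \le k\}$. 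The constraint is membership in $L^2(\R, w(y)\,dy)$: since $w(y) = e^{-2(e^y-1)}$ decays super-exponentially as $y \to +\infty$ but is bounded above and below near $-\infty$, the function $y^j e^{-\lambda y}$ lies in $L^2(\R, w\,dy)$ precisely when the behavior at $-\infty$ is controlled, namely $\int_{-\infty}^0 |y|^{2j} e^{-2\operatorname{Re}\lambda \, y}\,dy < \infty$, which holds iff $\operatorname{Re}\lambda > 0$; the $+\infty$ side is automatic. So the finite-dimensional $\{S_t\}$-invariant subspaces are exactly the spans of finite unions of $\{y^j e^{-\lambda y} : 0 \le j \le k\}$ with $\operatorname{Re}\lambda > 0$, $k \in \N_0$.

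The third step is to push these back through $\mathfrak{F}^{-1} = \mathcal{U}^{-1}\mathcal{L}\, T W^{-1}$ and identify the image. Tracking $y^j e^{-\lambda y}$ through $W^{-1}$ (multiply by $\sqrt{w}$), then $T$ (the substitution $x = e^y$, giving a function of $x$ like $(\log x)^j x^{-\lambda}$ times $x^{-1/2}$ and the $w$-factor becoming $e^{-2(x-1)}$-type), then the Laplace transform $\mathcal{L}$, then $\mathcal{U}^{-1}$ (the weighted change of variables $z \mapsto (z-1)/(z+1)$ with the factor $\tfrac{2\sqrt{\pi}}{1-z}$) should produce, up to nonzero scalar multiples and a relabeling of parameters, exactly $(1-z)^\mu (\log(1-z))^j$ with $\operatorname{Re}\mu > -\tfrac12$; the shift in the real part from $>0$ to $>-\tfrac12$ comes from the $x^{-1/2}$ and $(1-z)^{-1}$ factors. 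A cleaner alternative for this step, which I would likely prefer to avoid a long chain of integral identities, is to verify directly from \eqref{97344987t389476345987} that each $\operatorname{span}\{(1-z)^\mu(\log(1-z))^j : 0 \le j \le k\}$ with $\operatorname{Re}\mu > -\tfrac12$ is $C^*$-invariant (this is asserted in the text and is a routine integration by parts / differentiation-under-the-integral computation), note it is finite-dimensional, and then invoke the correspondence to conclude these account for \emph{all} of them by a dimension/parameter count against the classification from step two.

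The main obstacle I anticipate is step three: carefully matching up the parameters and showing the transform of $y^j e^{-\lambda y}$ is a \emph{scalar multiple} of $(1-z)^\mu(\log(1-z))^j$ (not some function involving incomplete Gamma functions that only asymptotically looks like a power of $1-z$). In particular one must check that the super-exponential weight $\sqrt{w}$ picked up from $W^{-1}$, together with the Laplace transform, conspires to give an honest power of $1-z$ — this is really a statement that the relevant functions are eigenfunctions/Jordan vectors of $C^*$, which is why proving $C^*$-invariance of the candidate subspaces directly from \eqref{97344987t389476345987} is the safer route, reducing the transfer argument to a pure counting statement about which exponents survive the $L^2(w)$ constraint.
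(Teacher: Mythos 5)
Your overall architecture coincides with the paper's first proof: transfer by the unitary $\mathfrak{F}$ to the translation semigroup $\{S_t\}_{t\ge0}$ on $L^2(\R,w(y)\,dy)$, classify the finite-dimensional invariant subspaces there, and pull back. Where the paper simply cites \cite[Thm.~2.9]{GP} for that classification, you supply a proof via the generator; that argument is sound (a finite-dimensional $\{S_t\}$-invariant subspace lies in the domain of the generator and is invariant under it, the generator acts as $-d/dy$ in the distributional sense, so the subspace is a span of complete Jordan chains of $d/dy$), except for a sign slip: with your convention $y^je^{-\lambda y}$, square-integrability against $w$ near $-\infty$ forces $\Re\lambda<0$, not $\Re\lambda>0$; the correct family is $y^je^{\lambda y}$ with $\Re\lambda>0$, as in the paper. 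Also, your worry about incomplete Gamma functions in the forward computation is unfounded: the Laplace integral runs over all of $(0,\infty)$, so $\int_0^\infty e^{-sx}x^{\lambda-1/2}e^{-x}\,dx=\Gamma(\lambda+\tfrac{1}{2})(s+1)^{-\lambda-1/2}$ exactly, differentiation in $\lambda$ produces the $\log$ powers, and the conformal change of variable sends $s+1$ to $2/(1-z)$; this is precisely the paper's step three and it is clean.

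The genuine gap is in your preferred route for step three. Exhibiting the candidates $V_{\mu,k}=\operatorname{span}\{(1-z)^\mu(\log(1-z))^j:0\le j\le k\}$ as finite-dimensional $C^*$-invariant subspaces, and knowing that the finite-dimensional $\{S_t\}$-invariant subspaces are exactly sums of atoms $G_{\lambda,k}=\operatorname{span}\{y^je^{\lambda y}:0\le j\le k\}$, does not show that the $V_{\mu,k}$ exhaust the list: both families are parametrized by a half-plane and an integer, but an injection between two sets each in bijection with a half-plane need not be a surjection, so a ``dimension/parameter count'' proves nothing. You must actually show that every pullback $\mathfrak{F}^{-1}G_{\lambda,k}$ equals some $V_{\mu,k}$. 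Two honest ways to do this: (i) carry out the explicit transform chain (your option (a), the paper's choice), which yields $\mu=\lambda-\tfrac{1}{2}$ directly; or (ii) match eigenvalues of the semigroup on the two sides: $S_te^{\lambda y}=e^{-\lambda t}e^{\lambda y}$ while $C_{\phi_t}(1-z)^\mu=(e^{-t}(1-z))^\mu=e^{-\mu t}(1-z)^\mu$, and since $\mathfrak{F}$ intertwines $C_{\phi_t}$ with a known scalar multiple of $S_t$, comparing the two one-parameter eigenvalue families pins down $\mu$ as an explicit affine function of $\lambda$ and shows every $G_{\lambda,0}$ is hit; the higher atoms follow because $G_{\lambda,k}$ is the unique $(k+1)$-dimensional invariant subspace on which the semigroup has the single eigenvalue $e^{-\lambda t}$, and $V_{\mu,k}$ has the matching property. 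Either way, some computation relating $\lambda$ to $\mu$ is unavoidable; the bare count is not a proof.
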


\begin{proof}
Our starting point is the result in \cite[Thm.~2.9]{GP} which says that the finite dimensional
$\{S_{t}\}_{t \geq 0}$-invariant subspaces of
$L^2(\R, w(y) dy)$ (recall the translations $S_{t}$ from Proposition \ref{sijgfoishiftTTT}) are all spanned by finite unions of
sets of the form
$$\{e^{\lambda y},ye^{\lambda y}, \ldots y^n e^{\lambda y}\}$$
for some $n \in \N_0$ and $\Re \lambda > 0$.
We now translate this into information about invariant subspaces of the  Ces\`aro operator via our semigroup discussion from \S \ref{two}.

Under the unitary operator $W^{-1} : L^2(\R, w(y) dy) \to L^2(\R)$ from \eqref{WWW}, the
function
$y^k e^{\lambda y}$ is sent to $$y^k e^{\lambda y}e^{-(e^y-1)}.$$
Next, under the unitary operator $T: L^2(\R) \to L^2(\R_{+})$ from \eqref{TTTTT}, this last function is sent to
$$(\log x)^k x^{\lambda-1/2} e^{-(x-1)}.$$ We may multiply by a meaningless constant and
instead consider the function
$$(\log x)^k x^{\lambda-1/2} e^{-x}.$$

The next step is to use the normalized Laplace transform $\mathcal{L}:L^2(\R_{+}) \to H^2(\C_+)$ from \eqref{Laplace}.
Observe that
\[
\int_0^\infty e^{-sx} x^{\lambda-1/2} e^{-x} \, dx = (s+1)^{-\lambda-1/2}\Gamma(\lambda+\tfrac{1}{2})
\]
(where $\Gamma$ is the standard gamma function)
and, differentiating with respect to $\lambda$, we deduce that
$$
\int_0^\infty e^{-sx}(\log x)^k x^{\lambda-1/2} e^{-x} \, dx $$
is equal to
\begin{equation}\label{777777yYYY55}
(s+1)^{-\lambda-1/2}\Big\{a_0+a_1 \log (s+1) + \ldots + a_k (\log (s+1))^k\Big\},
\end{equation}
for constants $a_0,\ldots,a_k$ (depending on $\lambda$ but not $s$).

Finally, with the unitary operator $\mathcal{U}^{-1}: H^2(\C^{+}) \to H^2$ from \eqref{UUUinverse}, the functions
from \eqref{777777yYYY55} are mapped to
 functions of the form
 \[
 (1-z)^{\lambda-1/2}\Big\{b_0+b_1 \log  (1-z) + \ldots + b_k (\log  (1-z))^k\Big\}.
 \]
 Note that $\Re(\lambda-\tfrac{1}{2}) > -\tfrac{1}{2}$ and we pick up the (only) eigenvectors
 $(1-z)^\mu$ with $\Re \mu> -\tfrac{1}{2}$.

 Next, two-dimensional invariant subspaces are spanned either by $(1-z)^{\mu_1}$ and $(1-z)^{\mu_2}$
 with $\mu_1 \ne \mu_2$, or by
 $(1-z)^\mu$ and $(1-z)^\mu (\log (1-z))$. Since $1-z \mapsto 1-e^{-t}z -1+ e^{-t} = e^{-t}(1-z)$
 under the composition semigroup, it is easy to see that this is an invariant subspace. And so on, for higher dimensions.
\end{proof}

\begin{Corollary}
The finite co-dimensional closed invariant subspaces of the Ces\`{a}ro operator  on $H^2$ are orthogonal complements of  spans of finite unions of sets of the form $$\{(1 - z)^{\mu} (\log(1 - z))^{j}, \quad 0 \leq j \leq k\},$$
where $\Re \mu > -\tfrac{1}{2}$ and $k \in \N_0$.
\end{Corollary}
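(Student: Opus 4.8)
The plan is to deduce this immediately from Theorem~\ref{thm:fdcstar} by the standard orthogonal-complement duality between $C$ and its adjoint $C^{*}$. First I would record the elementary fact that, for a \emph{closed} subspace $\mathcal{M}$ of $H^2$, one has $C\mathcal{M} \subset \mathcal{M}$ if and only if $C^{*}\mathcal{M}^{\perp} \subset \mathcal{M}^{\perp}$. Indeed, if $f \in \mathcal{M}^{\perp}$ and $g \in \mathcal{M}$, then $\langle C^{*} f, g\rangle = \langle f, C g\rangle = 0$ because $Cg \in \mathcal{M}$, which gives one implication; the reverse implication follows by the same computation applied to $C^{*}$ in place of $C$, together with $(\mathcal{M}^{\perp})^{\perp} = \mathcal{M}$, which holds since $\mathcal{M}$ is closed. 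Moreover $\operatorname{codim}\mathcal{M} = \dim \mathcal{M}^{\perp}$, so $\mathcal{M}$ is finite co-dimensional exactly when $\mathcal{M}^{\perp}$ is finite dimensional.

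Combining these observations finishes the proof. If $\mathcal{M}$ is a finite co-dimensional closed $C$-invariant subspace, then $\mathcal{M}^{\perp}$ is a finite dimensional $C^{*}$-invariant subspace, hence by Theorem~\ref{thm:fdcstar} it equals the span of a finite union of sets of the form $\{(1-z)^{\mu}(\log(1-z))^{j} : 0 \leq j \leq k\}$ with $\Re\mu > -\tfrac{1}{2}$ and $k \in \N_0$; taking orthogonal complements and using $\mathcal{M} = (\mathcal{M}^{\perp})^{\perp}$ yields the asserted description. Conversely, given any such span $\mathcal{N}$, it is finite dimensional, hence closed, and $C^{*}$-invariant by Theorem~\ref{thm:fdcstar}, so $\mathcal{N}^{\perp}$ is a finite co-dimensional closed $C$-invariant subspace of $H^2$, and every subspace in the stated family arises this way.

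There is no genuine obstacle beyond Theorem~\ref{thm:fdcstar} itself, which carries all the analytic content. The only point deserving a line of care is that the functions $(1-z)^{\mu}(\log(1-z))^{j}$ do lie in $H^2$ when $\Re\mu > -\tfrac{1}{2}$, so that the spans under discussion are honest finite-dimensional subspaces of $H^2$; this is a routine boundary estimate near $z = 1$, since $|1-\xi|^{2\Re\mu}$ is integrable against arclength on $\T$ precisely when $2\Re\mu > -1$, and the logarithmic factors contribute only powers of $\log|1-\xi|$, which do not affect integrability.
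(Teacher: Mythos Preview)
Your proof is correct and matches the paper's intended approach: the Corollary is stated in the paper with no proof at all, precisely because it follows from Theorem~\ref{thm:fdcstar} by the standard duality $C\mathcal{M}\subset\mathcal{M}\iff C^{*}\mathcal{M}^{\perp}\subset\mathcal{M}^{\perp}$ together with $\operatorname{codim}\mathcal{M}=\dim\mathcal{M}^{\perp}$, exactly as you wrote.
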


\section{The Kriete--Trutt transform}\label{sectsdfserKT}

A result of Kriete and Trutt \cite{MR281025}, used to prove that the Ces\`{a}ro operator on $H^2$ is subnormal, involves the following transform. Observe that
$$\Re\big( \frac{w}{1 - w}\big) > -\tfrac{1}{2} \quad \mbox{for all $w \in \D$} $$ and so the family of functions
\begin{equation}\label{888UHHJHJH6667}
q_w(z) := (1 - z)^{\frac{w}{1 - w}}, \quad w \in \D,
\end{equation}
 belong to $H^2$.
 Since
 $$q_{\frac{n}{n + 1}}(z) = (1 - z)^{n} \quad \mbox{for all $n \geq 0$},$$
 and the linear span of $(1 - z)^{n}, n \geq 0$, is dense in $H^2$, we see that
 \begin{equation}\label{ljdfhgsodlfhjbgdsa}
\overline{ \operatorname{span}}\{q_{w}: w \in \D\} = H^2.
 \end{equation}

The integral formula from \eqref{97344987t389476345987} verifies the eigenvalue identity
 $$C^{*} q_{w} = (1 - w) q_w \quad \mbox{for all $w \in \D$}.$$

For $f \in H^2$, define the function $Kf$ by
\begin{equation}\label{78uqhwebnfmnewhgrygt7h}
(K f)(w) := \langle f, q_{\overline{w}}\rangle_{H^2}, \quad w \in \D.
\end{equation}
Note that $K f$ is analytic on $\D$ and \eqref{ljdfhgsodlfhjbgdsa} enables us to create a Hilbert space $\mathcal{H}$ of analytic functions on $\D$, we will call the {\em Kriete--Trutt space}, as the range of $\mathcal{H}$, i.e.,
$$\mathcal{H} := \{Kf: f \in H^2\}.$$
The Hilbert space structure on $\mathcal{H}$ comes from the range norm
$$\|K f\|_{\mathcal{H}} := \|f\|_{H^2}.$$
With this norm, the {\em Kriete--Trutt transform} $K$ defined in  \eqref{78uqhwebnfmnewhgrygt7h} is a unitary map from $H^2$ onto $\mathcal{H}$.

 For $f \in H^2$ and $z \in \D$, use the eigenvalue identity from \eqref{97344987t389476345987} to see that
 \begin{align*}
(K C f)(z) & =
\langle C f, q_{\overline{z}}\rangle_{H^2}\\ & = \langle f, C^{*} q_{\overline{z}}\rangle_{H^2} \\
& = \langle f, (1 - \overline{z}) q_{\overline{z}}\rangle_{H^2}\\
& = (1 - z) \langle f, q_{\overline{z}}\rangle_{H^2}\\
& = (1 - z) (K f)(z).
\end{align*}
This shows that the multiplication operator $M_z f = z f$ on $\mathcal{H}$ is well defined, bounded, and $\|M_{z}\| = 1$ (since $\|I - C\| = 1$ from Proposition \ref{BrownShields}).
We summarize the discussion above with the following.

\begin{Proposition}[Kriete--Trutt \cite{MR281025}]\label{KTTTT}
The operator $K: H^2 \to \mathcal{H}$ is unitary and $K (I - C) K^{*} = M_{z}$, multiplication by $z$, on $\mathcal{H}$.
\end{Proposition}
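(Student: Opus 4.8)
The plan is to read the proposition as the formal consolidation of the computations that immediately precede it, since essentially all of the analytic content has already been assembled; what remains is bookkeeping. I would proceed in three short steps: (1) show that $K$ is an isometric bijection of $H^2$ onto $\mathcal{H}$, so that it is unitary once $\mathcal{H}$ carries the range norm; (2) record the eigenvalue identity $C^{*} q_{w} = (1-w) q_{w}$; and (3) deduce the intertwining relation by a one-line computation.

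For step (1), the only point to verify is injectivity of $K$: if $Kf = 0$, then $\langle f, q_{\overline{w}}\rangle_{H^2} = 0$ for every $w \in \D$, so $f$ is orthogonal to $\overline{\operatorname{span}}\{q_{w} : w \in \D\}$, which is all of $H^2$ by \eqref{ljdfhgsodlfhjbgdsa}; hence $f = 0$. Injectivity makes the range norm $\|Kf\|_{\mathcal{H}} := \|f\|_{H^2}$ well defined and turns $K$ into an isometric bijection from $H^2$ onto $\mathcal{H}$; completeness of $\mathcal{H}$ is then inherited from that of $H^2$, so $\mathcal{H}$ is a Hilbert space and $K$ is unitary with $K^{*} = K^{-1}$. (One should also observe that each $Kf$ is genuinely holomorphic on $\D$, either because $w \mapsto q_{\overline{w}}$ is an anti-holomorphic $H^2$-valued map, or directly by pairing against the orthonormal basis $\{z^{n}\}_{n \geq 0}$.)

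For step (2), write $\beta = w/(1-w)$ and note $\Re\beta + \tfrac{1}{2} = (1-|w|^2)/(2|1-w|^2) > 0$ exactly when $|w| < 1$, so $q_{w} = (1-z)^{\beta} \in H^2$ and $\beta \neq -1$; then $\int_{z}^{1}(1-\xi)^{\beta}\,d\xi = (1-z)^{\beta+1}/(\beta+1)$, and substituting into \eqref{97344987t389476345987} gives $(C^{*} q_{w})(z) = (1-z)^{\beta}/(\beta+1)$. Since $\beta + 1 = 1/(1-w)$, this equals $(1-w)q_{w}(z)$. For step (3), given $g = Kf \in \mathcal{H}$ we have $K^{*}g = f$, and the eigenvalue identity yields
\begin{align*}
(KCf)(z) &= \langle Cf, q_{\overline{z}}\rangle_{H^2} = \langle f, C^{*} q_{\overline{z}}\rangle_{H^2} = (1-z)\langle f, q_{\overline{z}}\rangle_{H^2} = (1-z)(Kf)(z),
\end{align*}
whence $\big(K(I-C)K^{*}g\big)(z) = (Kf)(z) - (KCf)(z) = z(Kf)(z) = (M_{z}g)(z)$.

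I do not expect a genuine obstacle; the one spot that merits a moment's care is the legitimacy of the formal manipulations with the branch of $(1-z)^{\beta}$ and the convergence of the integral in \eqref{97344987t389476345987} applied to $q_{w}$, but these are routine: $1-z$ takes values in the disk of radius $1$ about $1$, which lies in the open right half-plane, so a single-valued holomorphic branch of the power exists and all the integrals converge absolutely. As a closing remark, the boundedness of $M_{z}$ on $\mathcal{H}$ (indeed $\|M_{z}\| = 1$) then comes for free from the intertwining together with $\|I - C\| = 1$ from Proposition \ref{BrownShields}.
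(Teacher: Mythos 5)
Your proposal is correct and follows essentially the same route as the paper: the paper's argument is exactly the discussion preceding the proposition, namely injectivity of $K$ via the density \eqref{ljdfhgsodlfhjbgdsa}, the range norm making $K$ unitary, the eigenvalue identity $C^{*}q_{w}=(1-w)q_{w}$, and the one-line computation $(KCf)(z)=(1-z)(Kf)(z)$. The only difference is that you write out the integral verification of the eigenvalue identity and the $\Re\beta>-\tfrac12$ check explicitly, which the paper leaves as an assertion.
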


A further analysis of Kriete and Trutt  from \cite{MR281025} shows that $\mathcal{H}$ contains the analytic polynomials $\C[z]$ as a dense set and, more importantly (and quite difficult to prove), there is a positive finite Borel measure $\mu$ on $\overline{\D}$ such that
\begin{equation}\label{qppoUHggfffTT}
\int_{\D} |p|^2 d \mu = \|p\|^{2}_{\mathcal{H}} \quad \mbox{for all $p \in \C[z]$}.
\end{equation}
This measure $\mu$ is supported on the sequence of circles
$$\gamma_{n} = \Big\{z: \Big|z - \frac{n}{n + 1}\Big| = \frac{1}{n + 1}\Big\} \quad \mbox{for $n  \geq 0$};$$
(see Figure \ref{KTC}).
\begin{figure}
\begin{center}
 \includegraphics[width=.6\textwidth]{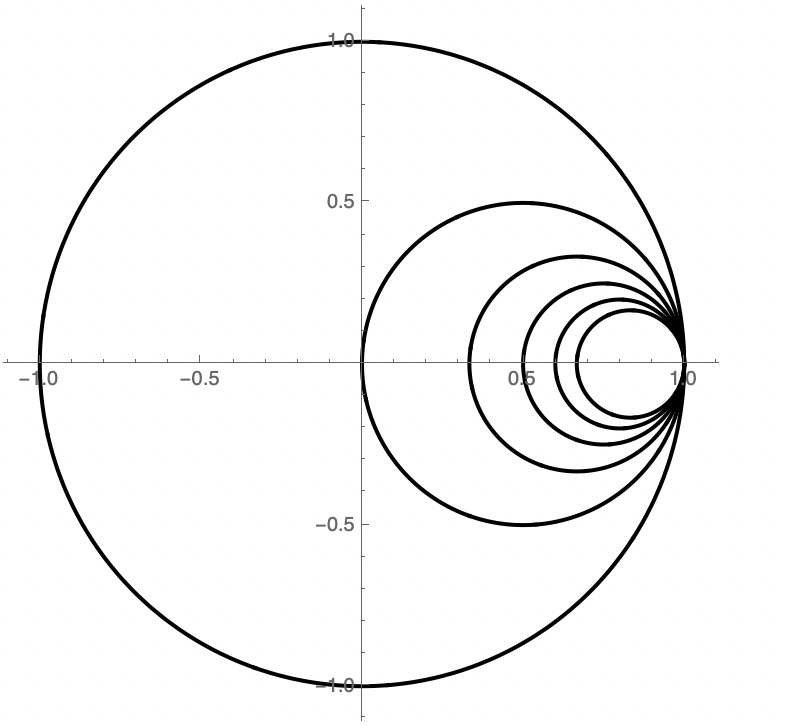}
 \caption{The circles (from left to right) $\gamma_0, \gamma_1, \gamma_2, \gamma_3, \gamma_4, \gamma_5$.}
 \label{KTC}
 \end{center}
\end{figure}
Furthermore, $\mu(\gamma_{n}) = 2^{-n - 1}$ and $\mu|_{\gamma_n}$ is mutually absolutely continuous with respect to arc length measure on $\gamma_{n}$.
 This says that $\mathcal{H} = \mathcal{H}^{2}(\mu)$ (the closure of $\C[z]$ in $L^2(\mu)$) and thus, $C$ is unitarily equivalent to $I - M_z$ on $\mathcal{H}^2(\mu)$. This last operator is subnormal (with $M_{1 - z}$ on $L^2(\mu)$ being a normal extension) and hence $C$ is subnormal.

A special case of a theorem of Aleman \cite{MR1255271} is the following theorem, which can
be proved by an analogue of an argument from \cite{MR933319} (see also \cite[Ch.~10]{MR4249569}).

\begin{Theorem}\label{thm:fincodviakt}
A closed subspace $\mathcal{M}$ of $\mathcal{H}^2(\mu)$ is a finite co-dimensional $M_z$-invariant subspace if and only if there is a polynomial $p$ all of whose zeros lie in $\D$ such that $\mathcal{M} = p \mathcal{H}^2(\mu)$.
\end{Theorem}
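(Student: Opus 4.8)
The plan is to prove both implications of Theorem~\ref{thm:fincodviakt}, with the ``if'' direction being the easier one. For the ``if'' direction: given a polynomial $p$ whose zeros $z_1, \ldots, z_N$ (counted with multiplicity) all lie in $\D$, I would first note that $p\mathcal{H}^2(\mu)$ is manifestly $M_z$-invariant. To see it is closed and has codimension $N$, I would use the fact that point evaluations at points of $\D$ are bounded on $\mathcal{H}^2(\mu)$ (this follows since $\mathcal{H}^2(\mu) \cong \mathcal{H}$, a reproducing kernel Hilbert space on $\D$ via the Kriete--Trutt transform $K$, with $(Kf)(w) = \langle f, q_{\overline w}\rangle_{H^2}$). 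A function $g \in \mathcal{H}^2(\mu)$ lies in $p\mathcal{H}^2(\mu)$ if and only if $g$ vanishes (to the appropriate order) at each $z_j$; the division $g/p$ then lands back in $\mathcal{H}^2(\mu)$ because dividing out a single factor $(z - z_j)$ with $|z_j| < 1$ preserves the space — this is the key local claim, and it reduces to showing that if $g \in \mathcal{H}^2(\mu)$ and $g(z_j) = 0$ then $(z - z_j)^{-1} g \in \mathcal{H}^2(\mu)$, which one gets from $(z-z_j)^{-1}g = -z_j^{-1}\sum_{k\ge 0} z_j^{-k} z^k g$-type manipulations or, more cleanly, from the identity $\frac{g(z) - g(z_j)}{z - z_j}$ being a norm limit of polynomials when $g$ is. So $p\mathcal{H}^2(\mu)$ is the finite-codimensional subspace of functions vanishing at the $z_j$'s with multiplicity, hence closed of codimension $N$.

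For the ``only if'' direction, suppose $\mathcal{M}$ is a closed $M_z$-invariant subspace of finite codimension $N$. Transporting via $K^*$ (Proposition~\ref{KTTTT}), $\mathcal{M}$ corresponds to a finite-codimensional $(I-C)$-invariant subspace of $H^2$, equivalently a finite-codimensional $C$-invariant subspace, equivalently (taking orthogonal complements) a finite-\emph{dimensional} $C^*$-invariant subspace $\mathcal{N} = \mathcal{M}^\perp$ of dimension $N$. Here is where I would invoke Theorem~\ref{thm:fdcstar}: $\mathcal{N}$ is spanned by finite unions of sets $\{(1-z)^\mu (\log(1-z))^j : 0 \le j \le k\}$ with $\Re\mu > -\tfrac12$. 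Now I need to see what such a subspace looks like after applying $K$. The crucial computation is that $K$ sends $(1-z)^\mu = q_{w}$ with $w = \mu/(\mu+1)$ (so $|w| < 1$ since $\Re\mu > -\tfrac12$) to the reproducing kernel of $\mathcal{H}$ at the point $\overline{w}$ — more precisely $(K q_{\overline{w}})(\cdot)$ is the kernel function, so $\mathrm{span}\{(1-z)^\mu\}^\perp$ maps to $\{g \in \mathcal{H}^2(\mu) : g(\overline w) = 0\}$, and the logarithmic terms $(1-z)^\mu(\log(1-z))^j$ map to $\overline\partial_w^{\,j}$-derivatives of the kernel, so their span's annihilator is $\{g : g \text{ vanishes to order } k+1 \text{ at } \overline w\}$. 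Taking the finite union/sum across the different $\mu$'s, $\mathcal{M} = K(\mathcal{N}^\perp)$ is exactly the set of $g \in \mathcal{H}^2(\mu)$ vanishing to prescribed orders at a finite set of points of $\D$, which is $p\mathcal{H}^2(\mu)$ for the polynomial $p$ with those zeros and multiplicities — using the ``if'' direction to know this is all of $p\mathcal{H}^2(\mu)$ and not something smaller.

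The main obstacle I anticipate is the identification ``$K(1-z)^\mu$ is (a constant multiple of) the reproducing kernel $k_{\overline w}$ of $\mathcal{H}$ and its $\overline w$-derivatives are the images of the logarithmic powers.'' One needs to (i) verify that $(Kf)(w) = \langle f, q_{\overline w}\rangle$ indeed makes $\mathcal H$ a reproducing kernel space with kernel $k_w(z) = \langle q_{\overline z}, q_{\overline w}\rangle_{H^2} = (Kq_{\overline w})(z)$, which is formal; (ii) differentiate the relation $q_{\overline w}(z) = (1-z)^{\overline w/(1-\overline w)}$ in $w$ to match $\partial_{\overline w}^j k_w$ against the images of $(1-z)^\mu(\log(1-z))^j$ — this is a chain-rule computation where the substitution $\mu \leftrightarrow w = \mu/(\mu+1)$ means derivatives in $\mu$ and in $w$ differ by a nonvanishing Jacobian factor, so the \emph{spans} match even though individual vectors get scrambled; and (iii) confirm that the orthogonal complement of the span of finitely many kernel functions and their derivatives is precisely the ideal of functions with those zeros, which is standard once one knows evaluations and their derivatives are bounded functionals on $\mathcal{H}^2(\mu)$. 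Care is needed to handle repeated $\mu$'s versus distinct ones uniformly, but the bookkeeping is routine once the kernel picture is in place.
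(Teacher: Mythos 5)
Your overall architecture is reasonable, but the load-bearing step --- the division property of $\mathcal{H}^2(\mu)$, i.e.\ that $g \in \mathcal{H}^2(\mu)$ and $g(\lambda)=0$ with $\lambda \in \D$ imply $(z-\lambda)^{-1}g \in \mathcal{H}^2(\mu)$ --- is exactly where your proposal has a genuine gap, and neither of your two suggested justifications works as stated. The geometric-series manipulation $(z-\lambda)^{-1} = -\lambda^{-1}\sum_{k\ge 0}\lambda^{-k}z^k$ converges only on $|z|<|\lambda|$, and as an operator series it diverges, since $\|M_z\|_{\mathcal{H}^2(\mu)} = 1$ while $|\lambda|^{-k}\to\infty$. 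The second suggestion --- that $\frac{g(z)-g(\lambda)}{z-\lambda}$ is ``a norm limit of polynomials when $g$ is'' --- begs the question: if $p_n \to g$ in $\mathcal{H}^2(\mu)$, there is no a priori reason the difference quotients $q_n = (p_n - p_n(\lambda))/(z-\lambda)$ converge, or are even bounded, in norm. The paper's verification of this point is the bulk of its argument: one first checks $Q_\lambda g \in L^2(\mu)$ by splitting the integral near and away from $\lambda$; then one uses the Kriete--Trutt lower bound $\|(z-\lambda)q\|_{\mathcal{H}^2(\mu)} \ge C_\lambda\|q\|_{\mathcal{H}^2(\mu)}$ for polynomials $q$ (see \eqref{bbbbel}) to see that the $q_n$ are norm bounded, deduces only \emph{weak} convergence $q_n \rightharpoonup Q_\lambda g$, and finally invokes Mazur's theorem (the weak and norm closures of the convex set of polynomials coincide) to conclude $Q_\lambda g \in \mathcal{H}^2(\mu)$. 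Without some version of this argument, neither your ``if'' direction (closedness and exact codimension of $p\mathcal{H}^2(\mu)$) nor the last step of your ``only if'' direction (identifying the common zero set with $p\mathcal{H}^2(\mu)$) goes through.

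Beyond that gap, your ``only if'' direction takes a genuinely different route from the paper's. You transport $\mathcal{M}$ back to $H^2$ via $K$, invoke Theorem \ref{thm:fdcstar} to classify the finite-dimensional $C^*$-invariant subspace $\mathcal{M}^\perp$, and then match the images of $(1-z)^\mu(\log(1-z))^j$ under $K$ with derivatives of reproducing kernels; that computation is correct (the triangular change of basis coming from the chain rule does preserve spans). The paper instead argues intrinsically in $\mathcal{H}^2(\mu)$, \`a la Axler--Bourdon: it produces a nonzero polynomial $Q_0 \in \mathcal{M}$ from a quotient-operator/linear-dependence argument, factors $Q_0 = qh$ with $q$ carrying the zeros in $\D$, uses Lemma \ref{cyclicicicic} (density of $(z-\xi)\mathcal{H}^2(\mu)$ for $\xi \in \T$) together with invertibility of $z-w$ for $|w|>1$ to discard the factor $h$, and finishes with a dimension count. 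Your route is shorter and avoids Lemma \ref{cyclicicicic}, but it is only legitimate if one takes the semigroup proof of Theorem \ref{thm:fdcstar} from Section 3 as given: the paper's stated purpose for Theorem \ref{thm:fincodviakt} is to furnish a second, independent proof of that very classification, so within the paper's logic your argument would make that second proof circular.
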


The following two propositions are relevant to such considerations (and for some examples in Section \ref{KTexamples}), so we give   short self-contained proofs.

\begin{Proposition}\label{COIADSASDSAD} 
The space $\mathcal{H}^2(\mu)$ has the so-called {\em division property}: if $\lambda \in \D$ and $f \in \mathcal{H}^2(\mu)$ with $f(\lambda) = 0$, then $f(z)/(z - \lambda) \in \mathcal{H}^2{\mu)}$. 
\end{Proposition}

\begin{proof}
The measure $\mu$ from $\mathcal{H}^2(\mu)$ is a finite positive Borel measure on $\overline{\D}$ with no point masses in $\D$ (recall the discussion at the end of \S \ref{sectsdfserKT}). Moreover, using the fact that $\mathcal{H}^2(\mu)$ is a reproducing kernel Hilbert space of analytic functions on $\D$ \cite[Lemma 3]{MR281025}, it follows that for each $\lambda \in \D$, there is a positive constant $C_{\lambda}$ such that
\begin{equation}\label{RKHS}
|f(\lambda)|^2 \leq C_{\lambda} \int |f|^2 d \mu \quad \mbox{for all $f \in \mathcal{H}^2(\mu)$}.
\end{equation}
Fix $\lambda \in \D$ and, for $f \in \mathcal{H}^2(\mu)$, consider the difference quotient
\begin{equation}\label{QQQQQ}
(Q_{\lambda} f)(z) := \frac{f(z) - f(\lambda)}{z - \lambda}, \quad z \in \D.
\end{equation}
Note that $Q_{\lambda} f$ is analytic on $\D$ (since $f$ is analytic on $\D$).

Let us first show that $Q_{\lambda} f \in L^2(\mu)$.
Fix $r > 0$ small enough so that $\overline{D(\lambda, r)} \subset \D$ (where $D(\lambda, r) = \{z: |z - \lambda| < r\}$).
Then
$$\int |Q_{\lambda} f|^2 d \mu = \int_{\overline{D(\lambda, r)}} |Q_{\lambda} f|^2 d \mu + \int_{\overline{\D} \setminus \overline{D(\lambda, r)}} |Q_{\lambda} f|^2 d \mu.$$
The first integral above is bounded above by
$$\mu(\overline{D(\lambda, r)}) \cdot \sup_{z \in D(\lambda, r)} \big|\frac{f(z) - f(\lambda)}{z - \lambda}\big|^2$$
which is finite.
The second integral is bounded above by
$$\frac{1}{r^2} \int_{\overline{\D}} |f - f(\lambda)|^2 d \mu$$
which is also finite.

Next we show that $Q_{\lambda} f \in \mathcal{H}^2(\mu)$. Let $\{p_n\}_{n \geq 1}$ be a sequence polynomials which approximate $f \in \mathcal{H}^{2}(\mu)$ in $L^2(\mu)$ norm.
From \eqref{RKHS} one can see that $p_n \to f$ pointwise in $\D$.
By \cite[Lemma 2]{MR350489}, there is a positive constant $C_{w}$  such that
\begin{equation}\label{bbbbel}
\|(z - \lambda) q\|_{\mathcal{H}^2(\mu)} \geq C_{w} \|q\|_{\mathcal{H}^2(\mu)} \quad \mbox{for all $q \in \C[z]$}.
\end{equation}
Apply this inequality  to the sequence of polynomials
$$q_{n}(z) = \frac{p_{n}(z) - p_{n}(\lambda)}{z - \lambda},$$
to see that $q_n \to f$ pointwise in $\D$ and are norm bounded. By \cite[Prop.~2]{MR0748841}, $q_n \to Q_{\lambda} f$ weakly in $\mathcal{H}^2(\mu)$. This means that $Q_{\lambda} f$ belongs to the weak closure of the polynomials. However, the weak closure and norm closure are the same, since the polynomials form
a convex set (this is sometimes known as Mazur's theorem \cite[Thm. V.3.13]{MR1009162}).
Moreover, by the closed graph theorem, the map $f \mapsto Q_{\lambda} f$ defines a bounded linear operator on $\mathcal{H}^2(\mu)$.
\end{proof}
See a discussion from \cite{MR0911086} for more on the division property for  general Banach spaces of analytic functions.

Finally, we make the following observation.

\begin{Proposition}\label{cyclicicicic}
The vector space $(z - \xi) \mathcal{H}^2(\mu)$ is dense in $\mathcal{H}^2(\mu)$ whenever $\xi \in \T$.
\end{Proposition}

\begin{proof}
By \cite[Theorem 4]{MR350489} the inclusion map $i: H^2 \to \mathcal{H}^2(\mu)$ is continuous. Thus, there is a positive constant $C$ such that
\begin{equation}\label{injectioncontinuous}
\|p(z) (z - \xi) - 1\|_{H^2} \geq C \|p(z) (z - \xi) - 1\|_{\mathcal{H}^{2}(\mu)} \quad \mbox{for all $p \in \C[z]$}.
\end{equation}
Since $(z - \xi)$ is an outer function, Beurling's theorem \cite[Thm.~7.4]{Duren} shows that the left hand side of the above can be made as small as desired by choosing an appropriate $p \in \C[z]$.

The previous paragraph shows that 
$1 \in \mathcal{S} := \overline{(z - \xi) \mathcal{H}^2(\mu)}$ and the $M_z$-invariance of $\mathcal{S}$ will show that $\mathcal{S} = \mathcal{H}^2(\mu)$.
\end{proof}

\section{Other interesting invariant subspaces}\label{KTexamples}

Although the finite co-dimensional $M_z$-invariant subspaces of the Kriete--Trutt space $\mathcal{H}^2(\mu)$ are thoroughly understood (Theorem \ref{thm:fincodviakt}), a general $M_z$-invariant subspace of $\mathcal{H}^2(\mu)$ can be quite complicated.   Recall the family of $H^2$ functions $q_w$, $w \in \D$, from \eqref{888UHHJHJH6667}.

\begin{Proposition}
If $A \subset \D$ is a zero sequence for $\mathcal{H}^{2}(\mu)$, meaning there is an $f \in \mathcal{H}^{2}(\mu) \setminus \{0\}$ which vanishes on $A$,  then
$$\overline{\operatorname{span}}\{q_{a}: a \in A\}$$ is a nontrivial invariant subspace of the adjoint of the Ces\`{a}ro operator.
\end{Proposition}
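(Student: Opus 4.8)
The plan is to exploit the eigenvalue identity $C^{*} q_{w} = (1-w) q_{w}$ for $w \in \D$, established in \S\ref{sectsdfserKT}, together with the Kriete--Trutt transform $K \colon H^{2} \to \mathcal{H}^{2}(\mu)$ which intertwines $I - C$ with $M_{z}$. First I would observe that $K q_{a}$ is, up to a nonzero scalar, the reproducing kernel $k_{a}$ of $\mathcal{H}^{2}(\mu)$ at the point $a \in \D$: indeed $(K q_{a})(w) = \langle q_{a}, q_{\overline{w}}\rangle_{H^{2}}$, and by the eigenvalue identity this pairing behaves like the kernel function evaluated at $a$. (More precisely one checks directly from \eqref{78uqhwebnfmnewhgrygt7h} that the $q_{a}$'s are carried to the kernel functions of $\mathcal H$.) Consequently
$$
K\Big(\overline{\operatorname{span}}\{q_{a}: a \in A\}\Big) = \overline{\operatorname{span}}\{k_{a}: a \in A\} =: \mathcal N_{A},
$$
and it is a standard fact in reproducing kernel Hilbert space theory that $\mathcal N_{A}^{\perp} = \{f \in \mathcal{H}^{2}(\mu): f(a) = 0 \text{ for all } a \in A\}$. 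Since $A$ is by hypothesis a zero sequence for $\mathcal{H}^{2}(\mu)$, this orthogonal complement is nonzero, so $\mathcal N_{A}$ is a proper closed subspace; and it is obviously nonzero (it contains every $k_{a}$, and kernel functions of a RKHS of analytic functions on $\D$ are never $0$). Hence $\overline{\operatorname{span}}\{q_{a}: a \in A\}$ is a nontrivial closed subspace of $H^{2}$.

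Next I would verify invariance under $C^{*}$. Since $C^{*} q_{a} = (1-a) q_{a}$ for each $a \in A$, the linear span of $\{q_{a}: a \in A\}$ is carried into itself by $C^{*}$; because $C^{*}$ is bounded on $H^{2}$, it maps the closed span into the closed span, so $C^{*}\big(\overline{\operatorname{span}}\{q_{a}: a \in A\}\big) \subset \overline{\operatorname{span}}\{q_{a}: a \in A\}$. Combined with the nontriviality established above, this proves the proposition. Alternatively, and perhaps more cleanly, one can phrase the whole argument on the $\mathcal{H}^{2}(\mu)$ side: $\overline{\operatorname{span}}\{k_{a}: a \in A\}$ is the orthogonal complement of the $M_{z}$-invariant subspace of functions vanishing on $A$ (invariance under $M_{z}$ of the zero set $\{f: f(a)=0, a\in A\}$ is immediate since $a \in \D$), hence is invariant under $M_{z}^{*}$; pulling back through $K$ and using Proposition \ref{KTTTT}, i.e. $K^{*} M_{z}^{*} K = (I-C)^{*} = I - C^{*}$, gives invariance under $I - C^{*}$ and therefore under $C^{*}$.

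I do not expect a serious obstacle here; the only point requiring a little care is the identification of $K q_{a}$ with the reproducing kernel $k_{a}$ of $\mathcal{H}^{2}(\mu)$ (and the attendant fact, cited from \cite[Lemma~3]{MR281025}, that $\mathcal{H}^{2}(\mu)$ is indeed a RKHS of analytic functions on $\D$ so that point evaluations are bounded and kernels are nonzero), together with verifying that the hypothesis ``$A$ is a zero sequence for $\mathcal{H}^{2}(\mu)$'' is exactly what forces $\mathcal N_{A}$ to be proper. Everything else — the intertwining, the boundedness of $C^{*}$, the standard RKHS duality $(\overline{\operatorname{span}}\{k_a\})^{\perp} = \{f: f|_A = 0\}$ — is routine.
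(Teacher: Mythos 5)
Your argument is correct and is essentially the paper's own proof: the orthogonal complement of $\overline{\operatorname{span}}\{q_a : a\in A\}$ is identified via $K$ with the functions in $\mathcal{H}^2(\mu)$ vanishing on $A$, which is nonzero precisely because $A$ is a zero sequence, and invariance follows from the eigenvalue identity $C^*q_a=(1-a)q_a$ together with the boundedness of $C^*$. The only cosmetic difference is your reproducing-kernel phrasing; note that $(Kq_a)(w)=\langle q_a, q_{\overline{w}}\rangle_{H^2}$ makes $Kq_a$ the kernel of $\mathcal{H}^2(\mu)$ at $\overline{a}$ rather than at $a$ (no scalar needed), a harmless conjugation that the paper's proof also elides.
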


\begin{proof}
Notice that
\begin{align*}
(\overline{\operatorname{span}}\{q_{a}: a \in A\})^{\perp} & = \{f \in H^2: \langle f, q_a \rangle = 0\;  \forall a \in A\}\\
& = \{f \in H^2: (K f)(a) = 0 \; \forall a \in A\}\\
& = \{g \in \mathcal{H}^2(\mu): g|_{A} \equiv 0\} \not = 0.
\end{align*}
This shows that $\overline{\operatorname{span}}\{q_{a}: a \in A\} \not = H^2$. From the fact that
$$C^{*} q_a = (1 - a) q_a,$$
it follows that $\overline{\operatorname{span}}\{q_{a}: a \in A\}$ is $C^{*}$-invariant.
\end{proof}

\begin{Corollary}
If $A \subset \D$ is a zero sequence for $\mathcal{H}^2(\mu)$, then
$$(\overline{\operatorname{span}}\{q_{a}: a \in A\})^{\perp}$$ is a nontrivial invariant subspace of the Ces\`{a}ro operator.
\end{Corollary}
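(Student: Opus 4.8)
The plan is to obtain this as an immediate consequence of the preceding Proposition, via the standard Hilbert-space duality between an operator and its adjoint. Write $\mathcal{N} := \overline{\operatorname{span}}\{q_a : a \in A\}$. The Proposition already does the work: it exhibits $\mathcal{N}$ as a closed subspace of $H^2$ with $C^{*}\mathcal{N} \subset \mathcal{N}$, and its proof shows that $\mathcal{N}^{\perp} = \{g \in \mathcal{H}^2(\mu) : g|_A \equiv 0\}$, which is nonzero precisely because $A$ is a zero sequence for $\mathcal{H}^2(\mu)$; moreover $\mathcal{N} \neq \{0\}$ since each $q_a$ is a nonzero element of it (for instance $q_a(0) = 1$). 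So $\mathcal{N}$ is a nontrivial closed subspace, and hence so is $\mathcal{N}^{\perp}$.

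The only remaining step is the elementary observation that for a bounded operator $T$ on a Hilbert space and a closed subspace $\mathcal{N}$, one has $T^{*}\mathcal{N} \subset \mathcal{N}$ if and only if $T(\mathcal{N}^{\perp}) \subset \mathcal{N}^{\perp}$ (if $x \perp \mathcal{N}$ and $y \in \mathcal{N}$ then $\langle Tx, y\rangle = \langle x, T^{*}y\rangle = 0$ since $T^{*}y \in \mathcal{N}$). Applying this with $T = C$ and $T^{*} = C^{*}$ on $H^2$ converts the $C^{*}$-invariance of $\mathcal{N}$ from the Proposition into the $C$-invariance of $\mathcal{N}^{\perp} = (\overline{\operatorname{span}}\{q_a : a \in A\})^{\perp}$; equivalently, one may quote Theorem \ref{wydfgviuc8d7s6ed7sfv6}, which packages exactly this duality in the present setting. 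Together with the nontriviality recorded above, this is the assertion of the Corollary.

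Alternatively, one can bypass the adjoint and argue directly through the Kriete--Trutt transform: the identity $KCf = (1-z)Kf$ from Proposition \ref{KTTTT} shows that if $f \in \mathcal{N}^{\perp}$, i.e.\ $Kf$ vanishes on the relevant subset of $\D$, then $K(Cf) = (1-z)Kf$ vanishes there too, so $Cf \in \mathcal{N}^{\perp}$. There is essentially no obstacle here, since the substantive content lies entirely in the Proposition and the Corollary is merely its reformulation under passage to the orthogonal complement; the only point meriting a word of care is the bookkeeping between $A$ and $\{\overline{a} : a \in A\}$ introduced by the conjugation in the definition \eqref{78uqhwebnfmnewhgrygt7h} of $K$, which is harmless because $\mu$ and its support (the circles $\gamma_n$) are symmetric about the real axis, so $A$ is a zero sequence for $\mathcal{H}^2(\mu)$ exactly when $\{\overline{a} : a \in A\}$ is.
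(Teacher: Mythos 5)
Your proof is correct and follows exactly the route the paper intends: the Corollary is the immediate orthogonal-complement dual of the preceding Proposition, via the standard fact that $C^{*}\mathcal{N}\subset\mathcal{N}$ iff $C\mathcal{N}^{\perp}\subset\mathcal{N}^{\perp}$, with nontriviality of both $\mathcal{N}$ and $\mathcal{N}^{\perp}$ supplied by the Proposition's proof. Your remark on the harmless bookkeeping between $A$ and $\{\overline{a}:a\in A\}$ is a careful touch the paper itself glosses over.
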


The zero sequences of $\mathcal{H}^2(\mu)$ can be quite complicated (see \S \ref{ramrekresas}) and do not have a complete characterization. 

One can use the discussion in the previous section (Proposition \ref{COIADSASDSAD} in particular) to create interesting ``chains'' of invariant subspaces for the Ces\`{a}ro operator.
Since the functions $(1-z)^\alpha$, $\Re \alpha> - \tfrac{1}{2}$, are eigenfunctions for $C^*$,
we may construct invariant subspaces by joining together eigenspaces. The following result \cite[Cor. 7]{MR350489} will be of use here.

\begin{Corollary}
Let $\{\lambda_n\}_{n \geq 1}$ be a sequence of positive numbers satisfying $\lambda_{n+1}-\lambda_n \geq\delta>0$ for all $n \in \N$ and consider the quantity
\[
b_r := \sum_{\lambda_n<r} \frac{1}{\lambda_n} - a \log r
\]
for $r>0$,
where $a >0$ is independent of $r$. If $b_r$ is unbounded above as $r \to \infty$ for some $a \geq \tfrac{1}{2}$
then
$$\operatorname{span}\{(1-z)^{\lambda_n}: n \geq 1\}$$ is a  dense subset of $H^2$. However, if  $b_r$ is bounded above
as $r \to \infty$ for some $a< \tfrac{1}{2}$ then
$$\operatorname{span}\{(1-z)^{\lambda_n}: n \geq 1\}$$ is not a dense subset of $H^2$.
\end{Corollary}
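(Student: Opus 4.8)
The plan is to translate the density statement into a statement about zero sequences of the Kriete--Trutt space $\mathcal{H}^2(\mu)$ and then to invoke \cite[Cor.~7]{MR350489}. The bridge is the transform $K$ from \eqref{78uqhwebnfmnewhgrygt7h} together with the eigenvalue identity $C^{*} q_w = (1-w) q_w$. First I would identify the eigenfunctions: since $q_w(z) = (1-z)^{w/(1-w)}$, the choice $w_n := \lambda_n/(1+\lambda_n)$ gives $w_n/(1-w_n) = \lambda_n$ and hence $(1-z)^{\lambda_n} = q_{w_n}$. Because $\lambda_n > 0$, each $w_n$ lies in $(0,1) \subset \D$; the gap hypothesis $\lambda_{n+1} - \lambda_n \geq \delta$ forces $\lambda_n \to \infty$, so $w_n \uparrow 1$, the common point of tangency of the circles $\gamma_n$ carrying $\mu$, and $1 - w_n = (1+\lambda_n)^{-1}$.

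Next I would dualize exactly as in the proof of the Proposition earlier in this section: the unitarity of $K : H^2 \to \mathcal{H}^2(\mu)$ and the defining formula \eqref{78uqhwebnfmnewhgrygt7h} give
$$\big(\overline{\operatorname{span}}\{q_{w_n} : n \geq 1\}\big)^{\perp} = \{f \in H^2 : (Kf)(w_n) = 0 \text{ for all } n\} \cong \{g \in \mathcal{H}^2(\mu) : g(w_n) = 0 \text{ for all } n\}.$$
Consequently, $\operatorname{span}\{(1-z)^{\lambda_n} : n \geq 1\}$ is dense in $H^2$ exactly when $\{w_n\}_{n\geq 1}$ is a uniqueness set for $\mathcal{H}^2(\mu)$ (no nonzero element vanishes on it), and it fails to be dense exactly when $\{w_n\}$ is a nontrivial zero sequence for $\mathcal{H}^2(\mu)$.

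It remains to quote \cite[Cor.~7]{MR350489}, which for a separated sequence in $(0,1)$ converging to the tangency point $1$ supplies a sufficient condition for being a uniqueness set and a sufficient condition for being a zero sequence, both phrased through the logarithmically corrected counting quantity $\sum_{\lambda_n < r} 1/\lambda_n - a \log r$ under the exponent parametrization $\lambda_n = w_n/(1-w_n)$. Feeding the previous paragraph into that dichotomy yields the statement: $b_r$ unbounded above for some $a \geq \tfrac12$ makes $\{w_n\}$ a uniqueness set, hence the span dense; $b_r$ bounded above for some $a < \tfrac12$ makes $\{w_n\}$ a zero sequence, hence the span not dense. The one genuinely computational point, and the step I would be most careful about, is matching normalizations in this last move: verifying that the ``density near $1$'' functional appearing in \cite[Cor.~7]{MR350489} is, modulo a bounded additive term, our $b_r$ --- here $1 - w_n = (1+\lambda_n)^{-1}$ turns a hyperbolic-type count of the $w_n$ near $1$ into $\sum 1/\lambda_n$ while the conformal distortion produces the $\log r$ term --- and that $\lambda_{n+1} - \lambda_n \geq \delta$ is exactly the separation hypothesis required there. (An alternative, avoiding $\mathcal{H}^2(\mu)$ entirely, would transport the question via $\mathfrak{F}$ from Proposition~\ref{sijgfoishiftTTT} to completeness of the exponentials $\{e^{(\lambda_n + 1/2)y}\}$ in $L^2(\R, w(y)\,dy)$, a classical weighted-completeness problem in the spirit of Domar \cite{MR0645094}; but \cite[Cor.~7]{MR350489} makes the $\mathcal{H}^2(\mu)$ route the shortest.)
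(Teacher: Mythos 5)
Your proposal is essentially the paper's own treatment: the paper states this Corollary as a direct quotation of Kriete--Trutt \cite[Cor.~7]{MR350489} and supplies no independent proof, so in both cases the entire analytic content (the $a=\tfrac12$ dichotomy for $b_r$) rests on that citation. Your added translation layer --- identifying $(1-z)^{\lambda_n}$ with $q_{w_n}$ for $w_n=\lambda_n/(1+\lambda_n)$ and dualizing density of the span against zero sets of $\mathcal{H}^2(\mu)$ --- is correct and matches how the surrounding section uses these identifications, though if \cite[Cor.~7]{MR350489} is already phrased as the $H^2$-density statement (as the paper's verbatim attribution suggests) your round trip through $\mathcal{H}^2(\mu)$ is redundant rather than load-bearing.
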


For example, for each $k \in \N$, the set
$$\Lambda_k := \{k+3n: n \geq 1 \}$$ yields a proper closed $C^*$-invariant subspace $V_k$ of $H^2$ defined by
$$V_k := \overline{\operatorname{span}} \{(1-z)^{\lambda_n}: n \geq 1\}.$$
(take $\tfrac{1}{3} < a < \tfrac{1}{2}$ in the above corollary).
 However, the join (the closure of $V_{k} + V_{\ell}$) of any two such subspaces
$V_k$ and $V_\ell$
with $k \not\equiv \ell$ mod 3 will be all 
of $H^2$ (take $\tfrac{1}{2} < a < \tfrac{2}{3}$).

We can actually say a bit more about these $C^{*}$-invariant subspaces $V_k$. Translating this into the Kriete--Trutt space $\mathcal{H}^2(\mu)$ determined by $f \mapsto K f$,
upon which $C$ is unitarily equivalent to $M_{1 - z}$ (recall Proposition \ref{KTTTT}),
the images (under $K$) of the orthogonal complements of the subspaces $V_k$ above (which are clearly invariant under $M_{1 - z}$) are subspaces of $\mathcal{H}^2(\mu)$ consisting of
functions that vanish at  $w=j/(j+1)$, $j \geq 1$.

Now the division property for $\mathcal{H}^2(\mu)$ (recall that the difference quotient $Q_{\lambda} f$ belongs to $\mathcal{H}^2(\mu)$ whenever $f \in \mathcal{H}^2(\mu)$  and $\lambda \in \D$, from \eqref{QQQQQ})  can be used to show that, for example,
$$V_1 \supsetneq V_4 \supsetneq V_7 \supsetneq \ldots$$
(or equivalently, that $(1-z) \not\in V_4$, etc.). Indeed, by  the division property, it is possible to construct
functions in $\mathcal{H}^2(\mu)$ that vanish at $j/(j+1)$ for $j=4,7,10,\ldots$ but not for $j=1$.

\section{The role of model spaces}\label{modelrole}

This section contains a very curious class of invariant subspaces for the Ces\`{a}ro operator coming from the well studied theory of model spaces. For $\alpha > 0$ let $u_{\alpha}$ be the atomic inner function
\begin{equation}\label{bkkCKCIi}
u_{\alpha}(z) := \exp\Big(\alpha \frac{z + 1}{z - 1}\Big), \quad z \in \D.
\end{equation}
Notice that $u_{\alpha}$ is a bounded analytic function on $\D$ and $|u_{\alpha}(e^{i \theta})| = 1$ for all $0 < \theta < 2 \pi$. Thus, $u_{\alpha} H^2$ is a closed subspace of $H^2$.
From here one can consider the {\em model  space}
$$(u_{\alpha} H^2)^{\perp}  :=  \{f \in H^2: f \perp u_{\alpha} H^2\}.$$ We refer the reader to \cite{MR3526203} for the basics of model spaces.

Let us denote $H^{\infty}(\D)$ to be the set of bounded analytic functions on $\D$ and for $g \in H^{\infty}(\D)$, define 
$$\|g\|_{\infty} := \sup_{z \in \D} |g(z)|.$$

As noted by Professor Aleman, the following result can also be derived from a similar result on 
the Volterra operator, which can be found in \cite{MR2445578}.  We give a direct proof using semi-groups since they will be used to obtain further results below. 

\begin{Theorem}\label{thm:ualphinv}
For each $\alpha > 0$, the model space $(u_{\alpha} H^2)^{\perp}$ is a closed invariant subspace of the Ces\`{a}ro operator.
\end{Theorem}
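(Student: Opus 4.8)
The plan is to use the Beurling--Lax-type description of Ces\`{a}ro-invariant subspaces furnished by Theorem~\ref{wydfgviuc8d7s6ed7sfv6}: a closed subspace $\mathcal{M} \subset H^2$ satisfies $C\mathcal{M} \subset \mathcal{M}$ if and only if $C_{\phi_t}(\mathcal{M}^\perp) \subset \mathcal{M}^\perp$ for all $t \geq 0$, where $\phi_t(z) = e^{-t}z + 1 - e^{-t}$. Applying this with $\mathcal{M} = (u_\alpha H^2)^\perp$, so that $\mathcal{M}^\perp = u_\alpha H^2$, the task reduces to showing that $C_{\phi_t}(u_\alpha H^2) \subset u_\alpha H^2$ for every $t \geq 0$; that is, for each $h \in H^2$ we must produce $g \in H^2$ with $(u_\alpha \cdot h) \circ \phi_t = u_\alpha \cdot g$, equivalently $(u_\alpha \circ \phi_t)(h \circ \phi_t) = u_\alpha g$.

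First I would compute $u_\alpha \circ \phi_t$ explicitly. Writing $w = \phi_t(z)$, a direct computation gives $\dfrac{w+1}{w-1} = \dfrac{e^{-t}z + 2 - e^{-t}}{e^{-t}z - e^{-t}} = \dfrac{e^{-t}(z-1) + 2}{e^{-t}(z-1)} = 1 + \dfrac{2e^{t}}{z-1} = \dfrac{z+1}{z-1} + \dfrac{2(e^t-1)}{z-1}$. Hence
\[
u_\alpha(\phi_t(z)) = \exp\!\Big(\alpha \frac{z+1}{z-1}\Big)\exp\!\Big(\alpha \frac{2(e^t-1)}{z-1}\Big) = u_\alpha(z)\, \exp\!\Big(\frac{2\alpha(e^t-1)}{z-1}\Big).
\]
The second factor is $\exp\!\big(\beta \frac{z+1}{z-1}\big)\exp(-\beta)$ with $\beta = \alpha(e^t-1) \geq 0$; that is, up to the positive constant $e^{-\beta}$ it equals the inner function $u_\beta$. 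Thus $u_\alpha \circ \phi_t = e^{-\beta} u_\alpha u_\beta$. Consequently
\[
(u_\alpha h)\circ \phi_t = (u_\alpha \circ \phi_t)(h \circ \phi_t) = u_\alpha \cdot \big(e^{-\beta} u_\beta (h\circ \phi_t)\big),
\]
and it remains only to check that $g := e^{-\beta} u_\beta (h\circ\phi_t)$ lies in $H^2$. But $h \circ \phi_t = C_{\phi_t}h \in H^2$ since $C_{\phi_t}$ is bounded on $H^2$ (Littlewood subordination, as recalled in \S\ref{two}), and $u_\beta \in H^\infty(\D)$ with $\|u_\beta\|_\infty \leq 1$, so $g \in H^2$ with $\|g\|_{H^2} \leq e^{-\beta}\|C_{\phi_t}h\|_{H^2}$. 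This shows $C_{\phi_t}(u_\alpha H^2) \subset u_\alpha H^2$ for all $t \geq 0$, and Theorem~\ref{wydfgviuc8d7s6ed7sfv6} then yields $C\,(u_\alpha H^2)^\perp \subset (u_\alpha H^2)^\perp$.

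I do not expect a serious obstacle here: the one computation that needs care is the identity $u_\alpha \circ \phi_t = e^{-\alpha(e^t-1)}\, u_\alpha\, u_{\alpha(e^t-1)}$, which is the algebraic heart of the argument and is precisely what makes the semigroup $\{\phi_t\}$ ``compatible'' with the atomic inner functions $u_\alpha$ (both being naturally adapted to the boundary point $1$). Everything else is bookkeeping: closedness of $(u_\alpha H^2)^\perp$ was already noted, and boundedness of $C_{\phi_t}$ on $H^2$ is quoted in \S\ref{two}. One could alternatively phrase the whole thing on the half-plane side via $\mathcal{U}$ and the shift semigroup $\{\psi_t\}$, where $u_\alpha$ transforms into the exponential $e^{-\alpha s}$ and composition with $\psi_t(s) = e^t s + (e^t-1)$ multiplies it by $e^{-\alpha(e^t-1)}e^{-\alpha(e^t-1)s}$ — making the invariance even more transparent — but the disk computation above is self-contained and short enough that I would present it directly.
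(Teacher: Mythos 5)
Your proposal is correct and follows essentially the same route as the paper: reduce via Theorem~\ref{wydfgviuc8d7s6ed7sfv6} to showing $C_{\phi_t}(u_\alpha H^2)\subset u_\alpha H^2$, and then verify that $(u_\alpha\circ\phi_t)/u_\alpha$ is a bounded analytic function. The only difference is that the paper establishes this boundedness via the inequality $\Re\big(\tfrac{\phi_t(z)+1}{\phi_t(z)-1}-\tfrac{z+1}{z-1}\big)\leq 0$ (Cowen--Wahl/Julia's lemma), whereas you compute the quotient exactly as $e^{-\alpha(e^t-1)}u_{\alpha(e^t-1)}$, a correct and slightly sharper version of the same step.
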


\begin{proof}
By Theorem \ref{wydfgviuc8d7s6ed7sfv6} it suffices to show that
$$C_{\phi_t} (u_{\alpha} H^2) \subset u_{\alpha} H^2 \quad \mbox{for all $t  > 0$}.$$
 This comes from a special case of a result of Cowen and Wahl \cite[Lemma 5]{MR3266985}. For the sake of completeness, we outline the proof here. Observe that
\begin{equation}\label{66TTytr445433}
\Re\Big(\frac{\phi_t(z) + 1}{\phi_{t}(z)- 1} - \frac{z + 1}{z - 1}\Big) \leq 0 \quad \mbox{for all $ z \in \D$}.
\end{equation}
This follows from the simple fact that $\phi_t(\T)$ is a circle inside $\overline{\D}$ that is internally tangent to $\T$ at $\{1\}$ and that the linear fractional transformation
$$z \mapsto \frac{z + 1}{z - 1}$$ maps $\D$ onto the left half plane $-\C^{+}$ (One can also see \eqref{66TTytr445433} via Julia's Lemma).
For any $g \in H^2$ we have
\begin{align}
& C_{\phi_t} (u_{\alpha}(z) g(z)) = \exp\Big(\alpha \frac{\phi_t(z) + 1}{\phi_t(z) - 1}\Big) g (\phi_t(z))\nonumber\\
& = \Bigg(\exp\Big(\alpha \frac{z + 1}{z - 1}\Big) \exp\Big(\alpha\Big( \frac{\phi_t(z) + 1}{\phi_{t}(z)- 1} - \frac{z + 1}{z - 1}\Big)\Big)\Bigg) g (\phi_t(z))\nonumber\\
& = u_{\alpha}(z) \exp\Big(\alpha \Big(\frac{\phi_t(z) + 1}{\phi_{t}(z)- 1} - \frac{z + 1}{z - 1}\Big)\Big) g (\phi_t(z)).\label{ooocCC}
\end{align}
From \eqref{66TTytr445433}  it follows that the function
$$\exp\Big(\alpha \Big(\frac{\phi_t(z) + 1}{\phi_{t}(z)- 1} - \frac{z + 1}{z - 1}\Big)\Big) $$
belongs to $H^{\infty}(\D)$. Since $g(\phi_t) \in H^2$, \eqref{ooocCC} shows that
$$C_{\phi_t} (u_{\alpha} g) \in H^2 \quad \mbox{for all $t \geq 0$},$$
which completes the proof.
\end{proof}

As it turns out (see Theorem \ref{thm:onlymodel} below and a discussion of the Volterra operator in \cite{MR2445578}), the family of model spaces
$$\{(u_{\alpha} H^2)^{\perp}: \alpha > 0\}$$ are the {\em only} model spaces that are invariant under the Ces\`{a}ro operator. To prove this, we need a few reminders about inner functions.

Recall \cite[Ch.~2]{Duren}  that an analytic function $u$ on $\D$ is {\em inner} if $u \in H^{\infty}(\D)$ and the radial boundary function
$$u(\xi) := \lim_{r \to 1^{-}} u(r \xi),$$
which exists for almost every $\xi \in\T$ by Fatou's theorem \cite[Thm.~1.3]{Duren}, satisfies $|u(\xi)| = 1$ for almost every $\xi \in \T$. Any inner function can be factored (uniquely up to a unimodular constant factor) as $u = B s_{\mu}$, where $B$ is the Blaschke product
$$B(z)  := z^{N} \prod_{n = 1}^{\infty} \frac{\overline{z_n}}{|z_n|} \frac{z_n - z}{1 - \overline{z_n} z}, \quad z \in \D, $$
with zeros at $z = 0$ and at $z_n \in \D \setminus \{0\}$ satisfying the Blaschke condition $\sum_{n \geq 1} (1 - |z_n|) < \infty$, and $s_{\mu}$ is the singular inner function
$$s_{\mu}(z) := \exp\Big(-\int_{\T} \frac{\xi + z}{\xi - z} d \mu(\xi)\Big), \quad z \in \D,$$
with associated singular measure (with respect to Lebesgue measure $m$) on $\T$. Note that $u_{\alpha} = s_{\alpha \delta_{1}}$, where $\alpha > 0$. Both $B$ and $s_{\mu}$ are inner. An important tool needed below is {\em boundary spectrum}
\begin{equation}\label{spectrum}
\Sigma(u) := \Big\{\xi \in \T: \liminf_{z \in \D, z \to \xi} |u(z)| = 0\Big\}
\end{equation}
of $u$. It is known (see \cite[Thm.~7.18 and Prop.~7.19]{MR3526203}) that $\Sigma(u)$ is a closed subset of $\T$, $u$ has an analytic continuation across $\T \setminus \Sigma(u)$, and $\Sigma(u)$ consists of the accumulation points of the zeros of $B$ and the support of the singular measure $\mu$.

There is a corresponding notion of inner function for $H^2(\C^+)$. A bounded analytic function $\Theta$ on $\C^{+}$ is inner if $|\Theta(i y)| = 1$ for almost every $y \in \R$. Note that $\Theta$ is inner for $\C^{+}$ if and only if $u = \Theta \circ \gamma$ is  inner for $\D$, where $\gamma$ is the conformal map from $\D$ to $\C^{+}$ from \eqref{gammaC}.  Though there is an analogous factorization theorem for inner functions $\Theta$ on $\C^{+}$ as there was for inner functions $u$ on $\D$ (see \cite[Ch.~2]{Garnett} or \cite[Ch.~6]{MR3890074}), we will only need the fact that the function
\begin{equation}\label{hghhghHJHJHJH}
\Theta(s) = e^{-a s}, \quad s \in \C^{+},
\end{equation}
where $a > 0$, is inner on $\C^+$.

As we did for the atomic inner functions $u_{\alpha}$, we can define the {\em model space} $(u H^2)^{\perp}$ for any inner function $u$.
As noted earlier, the following result can be derived from a similar result on the
Volterra operator on $H^2$  given in \cite{MR2445578}.

\begin{Theorem}\label{thm:onlymodel}
If $u$ is a nonconstant  inner function and $(u H^2)^{\perp}$ is an invariant subspace for the Ces\`{a}ro operator, then $u = u_{\alpha}$ for some $\alpha > 0$.
\end{Theorem}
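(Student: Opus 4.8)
The plan is to use the characterization from Theorem~\ref{wydfgviuc8d7s6ed7sfv6}: $(uH^2)^\perp$ is $C$-invariant if and only if $C_{\phi_t}(uH^2)\subset uH^2$ for all $t\ge 0$. Since $u$ is inner and $\phi_t$ is a self-map of $\D$, the function $u\circ\phi_t$ is again inner, and $C_{\phi_t}(uH^2)\subset uH^2$ forces $u\circ\phi_t = (u\circ\phi_t)(0)^{-1}\cdot(\text{something})\cdot$... more precisely, it forces $u$ to divide $u\circ\phi_t$ in the sense that $(u\circ\phi_t)/u\in H^\infty(\D)$, i.e., $u\mid u\circ\phi_t$ as inner functions (since the quotient of two inner functions that lies in $H^\infty$ is inner). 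So the real content is: classify the inner functions $u$ with the property that $u$ divides $u\circ\phi_t$ for every $t>0$.

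I would push this to the half-plane via $\gamma(z)=(1+z)/(1-z)$. Writing $\Theta=u\circ\gamma^{-1}$, an inner function on $\C^+$, and recalling that $\phi_t$ corresponds to $\psi_t(s)=e^t s+(e^t-1)$, which is just the affine map $s\mapsto e^t(s+1)-1$, the divisibility condition becomes: $\Theta$ divides $\Theta\circ\psi_t$ for all $t>0$. Now I would invoke the structure of inner functions on $\C^+$: $\Theta$ factors as a Blaschke product times $e^{-as}$ (with $a\ge 0$) times a singular inner function associated to a singular measure on $\R\cup\{\infty\}$. The affine maps $\psi_t$ contract toward the fixed point $-1$ and push everything else toward $\infty$. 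The divisibility $\Theta\mid\Theta\circ\psi_t$ for all $t$ is extremely restrictive: any Blaschke zero $s_0$ of $\Theta$ would need $\psi_t^{-1}(s_0)$ (equivalently the preimages under all $\psi_t$, which sweep out a whole curve) to be zeros of $\Theta$, contradicting the Blaschke condition; similarly a finite-mass singular part on $\R$ would have to be spread along an orbit, again impossible; and the only point fixed by every $\psi_t$ that can carry mass is $\infty$ (corresponding to $\xi=1$ on $\T$). The only surviving possibility is $\Theta(s)=c\,e^{-as}$ with $a>0$, whose pullback to $\D$ is exactly $u_\alpha$ (up to a unimodular constant, which we may absorb).

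Concretely, the cleanest route to the last step is through the boundary spectrum $\Sigma(u)$ from \eqref{spectrum}. Since $C_{\phi_t}$ maps $uH^2$ into itself, $u$ has an analytic continuation obstruction exactly on $\Sigma(u)$, and the relation $u\mid u\circ\phi_t$ gives $\Sigma(u\circ\phi_t)\supset\Sigma(u)$; but $\phi_t$ extends analytically across all of $\T$ with $\phi_t(\xi)\ne\xi$ for $\xi\ne 1$, so $\Sigma(u\circ\phi_t)=\phi_t^{-1}(\Sigma(u))$. Iterating and using that $\phi_t^{-1}$ (i.e.\ $\phi_{-t}$) moves every point of $\T\setminus\{1\}$ strictly away from $1$, while keeping us inside $\T$, forces $\Sigma(u)\subset\{1\}$; and $\Sigma(u)\ne\varnothing$ since $u$ is nonconstant. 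Hence $\Sigma(u)=\{1\}$, so $u$ is a singular inner function with singular measure supported at $\{1\}$, i.e.\ $u=s_{\alpha\delta_1}=u_\alpha$ for some $\alpha>0$.

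The main obstacle I anticipate is making the divisibility deduction $C_{\phi_t}(uH^2)\subset uH^2\ \Rightarrow\ (u\circ\phi_t)/u\in H^\infty$ fully rigorous, and then handling the "spectrum is a single point" argument carefully — in particular ruling out a Blaschke component whose zeros accumulate only at $\{1\}$ (such a $u$ would have $\Sigma(u)=\{1\}$ too but need not equal $u_\alpha$). To close that gap one must use the divisibility relation itself, not just the spectrum: $u\mid u\circ\phi_t$ for a \emph{one-parameter family} $t>0$ means the zero set of $u$ is invariant under all the flows $\phi_{-t}$, and since these flows have no fixed points on $\D$ other than none in the interior (the fixed point $1$ is on the boundary) and move interior points along arcs terminating at $\T$, a nonempty zero set would violate the Blaschke condition. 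So the Blaschke part must be trivial, the $e^{-as}$-exponent is the only freedom, and $u=u_\alpha$ follows.
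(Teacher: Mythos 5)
Your proposal is correct and follows essentially the same route as the paper: reduce via Theorem~\ref{wydfgviuc8d7s6ed7sfv6} to the divisibility $(u\circ\phi_t)/u\in H^\infty(\D)$, rule out zeros of $u$ in $\D$ because the flow would force $u$ to vanish along a curve, and then localize the singular measure at $\xi=1$ via the boundary spectrum (the paper does this by comparing $\liminf|u((1+z)/2)|$ and $\liminf|u(z)|$ at a hypothetical $\xi_0\in\Sigma(u)\setminus\{1\}$, which is your containment $\Sigma(u)\subset\Sigma(u\circ\phi_t)\subset\{1\}$ in disguise). The only quibble is that your identity $\Sigma(u\circ\phi_t)=\phi_t^{-1}(\Sigma(u))$ is not quite correct before the Blaschke factor has been removed (zeros of $u$ in $\D$ pull back to points of $\T\setminus\{1\}$), but since you dispose of the zeros separately this does not affect the argument.
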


\begin{proof}
We use Theorem \ref{wydfgviuc8d7s6ed7sfv6}. For the closed subspace $uH^2$ to be invariant under each composition operator $C_{\phi_t}$, $t \geq 0$, it
is necessary for $u$ to divide $u \circ \phi_t$ for each $t$ (meaning that $(u \circ \phi_t)/u \in H^{\infty}(\D)$). Thus, if $u$ has a zero $z_0 \in \D$ then $u(\phi_t(z_0))=0$ for every $t\geq 0$. As $t \to 0^{+}$, these zeros accumulate at $z = 0$
which is
impossible by the isolated zeros theorem for analytic functions.

By the classical factorization theorem for inner functions mentioned earlier (and the argument in the previous paragraph), we may now suppose that $u$ is a singular inner function $s_{\mu}$ and consider what happens
if a point $\xi_0 \in \T \setminus \{1\}$ lies in its boundary spectrum $\Sigma(u)$ from \eqref{spectrum}. Let us examine what happens to $C_{\phi_t} (u H^2)$ when $t=-\log 2$.
Assuming that $u(\phi_{-\log 2}) \in u H^2$, we see that  $u(z)$ divides the bounded analytic  function $u((1+z)/2)$, say
$u((1+z)/2)=u(z) g(z)$ where $g\in H^\infty(\D)$ (removing inner factors does not change the $H^\infty$ norm).

Now observe that
\begin{equation}\label{9w87ryugoerfwd}
 \liminf_{z \in \D, z \to \xi_0} |u((1+z)/2)| = |u((1+\xi_0)/2)| \ne 0
 \end{equation}
(since $u$ is a singular inner function and thus has no zeros in $\D$). However, by the definition of boundary spectrum from \eqref{spectrum},
\begin{equation}\label{nbbVVVGHBN}
\liminf_{z \in \D, z \to \xi_0} |u(z)|= 0.
\end{equation}
Since $u((1+z)/2)=u(z) g(z)$, the facts from \eqref{9w87ryugoerfwd} and \eqref{nbbVVVGHBN} contradict the fact that $g$ must be bounded near $\xi_0$.
\end{proof}

From Theorem \ref{thm:ualphinv},
the Ces\`{a}ro operator has the model spaces $(u_{\alpha} H^2)^{\perp}$ as  invariant subspaces for $\alpha>0$ (and no other model spaces are invariant under $C$).
Thus, its adjoint $C^*$ has the invariant subspaces $u_{\alpha} H^2$ and no other Beurling subspaces $u H^2$ are invariant. 

With the notation of \cite{GP}, we can see that these Beurling spaces $u_{\alpha} H^2$ are equivalent to the standard $\{S_{t}\}_{t \geq 0}$-invariant subspaces
$$L^2((a,\infty),w(y) dy) \quad \mbox{ with $a=\log \alpha$}.$$
For under \[
TW^{-1}: L^2(\R, w(y) dy) \to L^2(0,\infty)
\] (recall the operators $T$ and $W$ from \S \ref{two}) such a subspace is sent to $L^2(e^a,\infty)$. Now, under the  normalized Laplace transform $\mathcal{L}$, it
maps to the space $\Theta_a H^2(\C^+)$, where $\Theta_a$ is the inner function
$\Theta_{a}(s) = \exp(-e^a s)$ from \eqref{hghhghHJHJHJH}.
Finally, under  the unitary operator $\mathcal{U}^{-1}: H^2(\C^+) \to H^2$ from \eqref{UUUinverse} we arrive at $u_\alpha H^2$ with $\alpha=e^a$.

The model spaces $(u_{\alpha} H^2)^{\perp}$, $\alpha > 0$,  also have the following interesting property.

\begin{Theorem}\label{indewejrhewf777}
If $\mathcal{M}$ is a closed invariant subspace for the Ces\`{a}ro operator such that $(u_{\alpha} H^2)^{\perp} \subset \mathcal{M}$ for some $\alpha > 0$, then $\mathcal{M} = (u_{\beta} H^2)^{\perp}$ for some $\beta \geq \alpha$.
\end{Theorem}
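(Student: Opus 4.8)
The plan is to work entirely on the ``half-plane / shift'' side, where model spaces of the form $u_\alpha H^2$ correspond to the translation-invariant subspaces $L^2((a,\infty), w(y)\,dy)$ with $a = \log\alpha$, as spelled out in the discussion following Theorem \ref{thm:onlymodel}. By Theorem \ref{wydfgviuc8d7s6ed7sfv6} (and the chain of unitaries $\mathfrak F = WT^{-1}\mathcal L^{-1}\mathcal U$ from Proposition \ref{sijgfoishiftTTT}), a closed $C$-invariant $\mathcal M \subset H^2$ corresponds to a closed $\{S_t\}_{t\geq0}$-invariant subspace $\mathcal F := \mathfrak F \mathcal M^{\perp}$ of $L^2(\R, w(y)\,dy)$, and the hypothesis $(u_\alpha H^2)^{\perp}\subset\mathcal M$ translates to $\mathcal F \subset \mathfrak F\big((u_\alpha H^2)^{\perp}\big)^{\perp} = \mathfrak F(u_\alpha H^2) = L^2((a,\infty), w(y)\,dy)$ with $a=\log\alpha$. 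So the whole statement reduces to the following purely real-variable fact: \emph{every closed $\{S_t\}_{t\geq0}$-invariant subspace $\mathcal F$ of $L^2(\R, w\,dy)$ that is contained in some $L^2((a,\infty), w\,dy)$ is itself of the form $L^2((b,\infty), w\,dy)$ for some $b\geq a$.}

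First I would establish that such an $\mathcal F$ is determined by its ``left endpoint.'' Set
\[
b := \inf\{\,c\in\R : \mathcal F \subset L^2((c,\infty), w\,dy)\,\},
\]
which is $\geq a$ by hypothesis (and finite, or the statement degenerates; one should note that $b<\infty$ unless $\mathcal F=\{0\}$, a case to dispose of separately, or one allows $u_\beta$ with $\beta=\infty$, i.e. $\mathcal M = H^2$). By definition $\mathcal F \subset L^2((b,\infty), w\,dy)$. The real content is the reverse inclusion. The key observation is that on the interval $(-\infty, N)$ for any finite $N$, the weight $w(y)=e^{-2(e^y-1)}$ is bounded above and below by positive constants, so $L^2((c,N), w\,dy)$ is, as a set, just $L^2(c,N)$ with an equivalent norm; on such intervals the translation semigroup behaves like the ordinary (unweighted) shift. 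Thus for the part of $\mathcal F$ supported in a fixed bounded-above region, Lax's theorem for the unweighted half-line shift applies: a shift-invariant subspace of $L^2(-\infty,N)$ (in the sense $S_t\mathcal F\cap L^2(-\infty,N)$) that contains functions with left endpoint arbitrarily close to $b$ must contain all of $L^2(b,N)$. Concretely, I would pick $f\in\mathcal F$ with essential left endpoint $< b+\varepsilon$, translate it rightward and take $L^2$-limits inside the closed subspace $\mathcal F$ to manufacture, for each $c>b$, the characteristic-type building blocks of $L^2(c,\infty)$; then use density of $\bigcup_{c>b} L^2((c,\infty),w\,dy)$ in $L^2((b,\infty),w\,dy)$ together with the fact that $\mathcal F$ is closed and $w$-integrable tails are controlled near $+\infty$. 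This shows $L^2((b,\infty),w\,dy)\subset\mathcal F$, giving equality, and then $\mathcal M = (u_\beta H^2)^{\perp}$ with $\beta = e^{b}\geq\alpha$.

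The step I expect to be the main obstacle is precisely the reverse inclusion near the right end, i.e. verifying that the approximation argument, which is clean on bounded-above intervals where $w$ is comparable to $1$, really does fill out the whole tail $(b,\infty)$ where $w$ decays super-exponentially and the space is much larger than the unweighted $L^2$. The delicate point is that a limit of translates of a single $f\in\mathcal F$ captures the behavior of $f$ only locally, and one must assemble the global tail from such local pieces while staying inside the closed, shift-invariant $\mathcal F$. I would handle this by a cutoff-and-translate scheme: for a fixed large $N$, write the desired target in $L^2((b,N),w\,dy)$ as a limit of translates (using the bounded-weight regime), include it in $\mathcal F$, then let $N\to\infty$ and use dominated convergence against the finite measure $w\,dy$ to pass to the full tail. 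One should also remark that the monotonicity $\beta\geq\alpha$ is automatic from $b\geq a$, and that the extreme cases ($\mathcal M = H^2$, corresponding to $\beta=+\infty$, and $\mathcal M=(u_\alpha H^2)^{\perp}$ itself) are consistent with the statement as phrased — though if the authors intend $\beta$ to be genuinely finite, the hypothesis should implicitly exclude $\mathcal M=H^2$, or the statement should be read as allowing the degenerate endpoint.
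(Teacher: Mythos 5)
Your reduction is exactly the paper's: via $\mathfrak{F}$ and Theorem \ref{wydfgviuc8d7s6ed7sfv6}, the statement becomes the claim that every closed $\{S_t\}_{t\geq 0}$-invariant subspace of $L^2(\R, w\,dy)$ contained in $L^2((a,\infty),w\,dy)$ equals $L^2((b,\infty),w\,dy)$ for some $b\geq a$, and the passage back to $\mathcal{M}=(u_\beta H^2)^{\perp}$ with $\beta=e^{b}$ is fine (as is your remark about the degenerate case $\mathcal{M}=H^2$). The gap is in your proof of that real-variable claim, which is the entire content of the theorem. The step ``a shift-invariant subspace that contains functions with left endpoint arbitrarily close to $b$ must contain all of $L^2(b,N)$'' is not what the Beurling--Lax theorem says, and it is false even in the unweighted model: under the Laplace transform, the right-translation-invariant subspaces of $L^2(0,\infty)$ are $\Theta H^2(\C^{+})$ for inner $\Theta$, and any Blaschke product $\Theta$ (no exponential factor) yields a \emph{proper} invariant subspace whose elements have supports with left endpoint $0$. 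So knowing the left endpoint of $\mathcal{F}$ does not pin down $\mathcal{F}$: only the exponential factors $e^{-bs}$ are detected by support endpoints, while Blaschke and singular factors are invisible to them. Your cutoff-and-translate scheme cannot repair this, because the closed invariant subspace generated by translates of a single $f$ is precisely one of these possibly proper $\Theta H^2$-type spaces; there is no localization to $(-\infty,N)$ that turns it into all of $L^2(b,N)$.

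What actually makes the claim true is the super-exponential decay of $w$ on $(a,\infty)$: as the paper argues, by Domar's work (quoted via the discussion preceding Theorem 2.9 of \cite{GP}) the weighted space $L^2((a,\infty),w\,dy)$ is so much larger than $L^2(a,\infty)$ that the nonstandard (Blaschke/singular) invariant subspaces disappear and only the standard ones $L^2((b,\infty),w\,dy)$ survive. This is the opposite mechanism from the one you invoke: the relevant feature is not that $w$ is locally comparable to $1$ on bounded-above intervals, but that its rapid decay at $+\infty$ rigidifies the invariant subspace lattice. Without citing or reproving Domar's theorem, your argument is incomplete at its only nontrivial point.
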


\begin{proof}
As we have just seen, the transformation $\mathfrak{F} = W T^{-1} \mathcal{L}^{-1} \mathcal{U}$ maps the subspace $u_\alpha H^2$ to
$L^2((a,\infty),w(y) dy)$, with $a=\log\alpha$. This has  the remarkable property that the only
translation-invariant subspaces contained in it are standard, that is, of the form
$L^2((b,\infty),w(y) dy)$, for $b \geq a$
 (this follows from
Domar's work and appears in the discussion preceding Theorem 2.9 in \cite{GP}).
On applying $\mathfrak{F}^{-1}$ the result follows.
\end{proof}

We now examine the $C$-invariant subspaces which are {\em contained} in a fixed model space  $(u_{\alpha} H^2)^{\perp}$.

\begin{Theorem}\label{zbzbssjjquUU}
Fix $\alpha > 0$. Up to isomorphism, the restriction of $I-C$ to the model space $(u_{\alpha} H^2)^{\perp}$ is a shift of multiplicity one and its
invariant subspaces are parameterized by Beurling spaces of the form $\Theta H^2(\C^+)$
where $\Theta \in H^\infty(\C^+)$ is inner.
\end{Theorem}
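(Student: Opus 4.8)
The plan is to transport the whole situation to the half-plane via the unitary $\mathcal{U}$ of \eqref{UUU} and its companions, and to identify $(u_\alpha H^2)^\perp$ with the model space $(\Theta_a H^2(\C^+))^\perp$ for $\Theta_a(s)=e^{-as}$, where $a = e^?$—more precisely, recall from the discussion just before Theorem \ref{indewejrhewf777} that $\mathfrak{F}$, and equivalently $\mathcal{U}$ composed with the Paley--Wiener identification, sends $u_\alpha H^2$ to $\Theta_a H^2(\C^+)$ with $a=\log\alpha$ (in the $L^2(\R_+)$ picture this is $L^2(a,\infty)$, i.e. $e^{-as}$-times the half-plane Hardy space). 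Thus $\mathcal{U}$ carries $(u_\alpha H^2)^\perp$ unitarily onto $(\Theta_a H^2(\C^+))^\perp$. The first step is therefore to record this identification cleanly, together with the fact (from \S\ref{two}) that under $\mathcal{U}$ the semigroup $\{C_{\phi_t}\}$ becomes $\{e^t C_{\psi_t}\}$ with $\psi_t(s)=e^t s + (e^t-1)$; hence, by Theorem \ref{wydfgviuc8d7s6ed7sfv6}, the $C$-invariant subspaces of $H^2$ contained in $(u_\alpha H^2)^\perp$ correspond exactly to the closed subspaces of $(\Theta_a H^2(\C^+))^\perp$ that are invariant under every $C_{\psi_t}$ (scalars are irrelevant).

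Next I would pass from the $C_{\psi_t}$-picture to a genuine shift picture. The key observation is that on $H^2(\C^+)$, via the Paley--Wiener/Laplace unitary $\mathcal{L}$ of \eqref{Laplace} back to $L^2(\R_+)$, composition with the affine map $\psi_t$ plus the natural weight becomes, after the changes of variable $T$, $W$ of \S\ref{two}, the plain translation semigroup $\{S_t\}$ on $L^2(\R,w(y)dy)$; and the subspace $(u_\alpha H^2)^\perp$ becomes $L^2((-\infty,a),w(y)dy)$, which — because $w$ is bounded above and below on $(-\infty,a)$ — is simply a renormed copy of $L^2(-\infty,a)$ (this is exactly the point made in the text following Proposition \ref{sijgfoishiftTTT}). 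On $L^2(-\infty,a)$ the translation semigroup $\{S_t\}_{t\ge0}$ is the classical left-translation semigroup on a half-line, whose cogenerator is the unilateral shift of multiplicity one; equivalently, under $\mathcal{L}$, $L^2(-\infty,a)\cong e^{-as}H^2(\C^+)$ wait—rather $L^2(0,\infty)\cong H^2(\C^+)$ and the restriction of $\{S_t\}$ to the half-line is unitarily equivalent to the semigroup of multiplications by $e^{-ts}$ on $H^2(\C^+)$. So the restriction of $I-C$ (whose spectrum sits in $\overline{\D}$, consistent with $\|I-C\|=1$ from Proposition \ref{BrownShields}) to $(u_\alpha H^2)^\perp$ is, up to unitary equivalence, the cogenerator of the shift semigroup on $H^2(\C^+)$, i.e. a shift of multiplicity one. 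The second step is to make this chain of unitary equivalences explicit and to check the multiplicity is one, which follows because $L^2(0,\infty)$ is generated as a $\{S_t\}$-cyclic space by a single function (equivalently $H^2(\C^+)$ is singly generated over $H^\infty$).

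Finally, for the invariant subspaces: a closed subspace of $H^2(\C^+)$ invariant under multiplication by all $e^{-ts}$, $t\ge0$, is — by the half-plane Beurling--Lax theorem (see \cite[p.~204]{MR3890074}, transported to $\C^+$ through $\gamma$ as in the discussion around \eqref{hghhghHJHJHJH}) — exactly of the form $\Theta H^2(\C^+)$ for an inner $\Theta\in H^\infty(\C^+)$. Pulling this back through $\mathcal{U}$ (and the weight identifications) gives the stated parameterization of the $C$-invariant subspaces inside $(u_\alpha H^2)^\perp$ by Beurling spaces $\Theta H^2(\C^+)$. The main obstacle I anticipate is the bookkeeping in the second step: one must verify that the renorming implicit in replacing $L^2((-\infty,a),w\,dy)$ by $L^2(-\infty,a)$ is a bounded invertible (not merely densely defined) intertwiner of the two translation semigroups, so that invariant-subspace lattices really do coincide, and that this equivalence is compatible with the cogenerator/shift identification — i.e. that "shift of multiplicity one" survives the renorming. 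Once that boundedness is in hand (it is, since $w$ is bounded above and below on $(-\infty,a)$), the Beurling--Lax classification and the pullback are routine.
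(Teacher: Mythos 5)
Your proposal follows the same route as the paper: pass to the translation model on $L^2(\R, w(y)\,dy)$, restrict to $L^2((-\infty,a), w(y)\,dy)$ with $a=\log\alpha$, use the boundedness of $w$ above and below on $(-\infty,a)$ to renorm onto $L^2(-\infty,a)$, then reverse and apply the Laplace transform to land in $H^2(\C^+)$, where the Beurling--Lax theorem gives the lattice. Two points need repair. First, there is a direction error: by Theorem \ref{wydfgviuc8d7s6ed7sfv6}, a $C$-invariant subspace $\M\subset(u_\alpha H^2)^\perp$ has $\M^\perp$ invariant under the $C_{\phi_t}$, so $\M$ itself is invariant under the \emph{adjoints} $C_{\phi_t}^{*}$, not under the $C_{\psi_t}$ as you assert; in the translation picture this is exactly why one must work with the adjoint semigroup $T_t=(e^{t/2}S_t)^{*}$ (backward translations), which is what leaves $L^2((-\infty,a),w(y)\,dy)$ invariant in the first place --- the forward translations $S_t$ do not preserve that subspace. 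Second, declaring that the restriction of $I-C$ ``is the cogenerator of the shift semigroup'' does not by itself establish that it is a shift of multiplicity one (and it is not literally the cogenerator); the paper instead computes the restricted operator as the resolvent of the generator of the transported semigroup, i.e.\ multiplication by
$$\int_0^\infty e^{-t}\,e^{t/2}e^{-st}\,dt=\frac{1}{s+\tfrac{1}{2}},$$
so that $I-C$ becomes multiplication by $b(s)=(s-\tfrac{1}{2})/(s+\tfrac{1}{2})$, a conformal map of $\C^+$ onto $\D$; it is this explicit analytic Toeplitz representation that simultaneously yields the multiplicity-one shift structure and the identification of the invariant subspaces with the Beurling spaces $\Theta H^2(\C^+)$. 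With those two corrections --- both of which fall under the ``bookkeeping'' you defer --- your argument coincides with the paper's.
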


\begin{proof}
To examine the $C$-invariant subspaces contained in a model space $(u_{\alpha} H^2)^{\perp}$, let us return to the translation model on $L^2(\R, w(y)dy)$ from \S \ref{two}
and note that the adjoint of the weighted translation $e^{t/2}S_t$ is given by
\[
(T_t g)(y)=e^{t/2} g(y+t) \frac{w(y+t)}{w(y)}, \quad y \in \R.
\]
We seek the common invariant subspaces for $\{T_t\}_{t \geq 0}$ that are contained in the space $L^2((-\infty,a), w(y) \, dy)$.

Note that the weight $w$ is uniformly bounded above and below on the interval $(-\infty,a)$ (see Figure \ref{Figure_weight}), and
thus the operator
$$V: L^2((-\infty,a), w(y) \, dy) \to L^2(-\infty,a), \quad (Vg)(y): =\frac{g(y)}{w(y)}$$ is an isomorphism (but not an isometry).

The question therefore reduces to parameterizing the common invariant subspaces
of the semigroup $\{V T_t V^{-1}\}_{t \geq 0}$ on $L^2(-\infty,a)$, and we see
that
\[
(V T_t V^{-1} g)(y)=e^{t/2} g(y+t),
\]
is a backward translation.
Now we can use the Beurling--Lax theorem to find the lattice of common invariant
subspaces, but there is an alternative method that gives us more
information about the action of $C$ on $(u_\alpha H^2)^\perp$.

Namely, we use a reversal operator
$$R: L^2(-\infty,a) \to L^2(0,\infty), \quad (Rf)(y)=f(a-y)$$ and then, applying the normalized Laplace transform $\mathcal{L}$,  we have
the isomorphism
$$ \mathcal{L} R: L^2(-\infty,a) \to H^2(\C^+).$$
Under this, the weighted left translation $e^{t/2} S_t$ becomes
\[
(\mathcal{L} R)VT_t V^{-1}(\mathcal{L} R)^{-1} G(s)= e^{t/2} e^{-st} G(s), \quad s \in \C^+.
\]
Let $A$ be the infinitesimal generator of the composition
semigroup $\{C_{\phi_t}\}_{t \geq 0}$, namely
\[
(Af)(z)=(1-z)f'(z), \quad z\in \mathbb{D},
\]
(see the pioneering work by Berkson and Porta \cite{MR480965}, for instance). Then
we may  represent the resolvent $(I-A^*)^{-1}$
of the adjoint   semigroup -- which is the restriction of
the Ces\`aro operator -- equivalently as multiplication by the function\[
F(s)= \int_0^\infty e^{-t} e^{t/2} e^{-st} \, dt = \frac{1}{s+\tfrac{1}{2}}.
\]
It is more productive now to use $I-C$  in which case
we have the operator of multiplication by the function
\[
\dfrac{s-\tfrac{1}{2}}{s+\tfrac{1}{2}}.
\]
That is,
up to isomorphism, the restriction of $I-C$ is a shift of multiplicity one and its
invariant subspaces are parameterized by Beurling spaces of the form $\Theta H^2(\C^+)$
where $\Theta \in H^\infty(\C^+)$ is inner.
\end{proof}

From Proposition \ref{BrownShields} we know that
$$\sigma(I - C) = \overline{\D}.$$
The construction above yields the following.   

\begin{Corollary}
For $\alpha > 0$, the spectrum of $I-C$ restricted to the model space $(u_{\alpha} H^2)^{\perp}$, or any of its invariant subspaces, is $\overline{\D}$.
\end{Corollary}

\begin{proof}
From the proof of Theorem \ref{zbzbssjjquUU}, the operator $I - C$, when restricted to a closed $C$-invariant subspace of a model space $(u_\alpha H^2)^\perp$
 will be the same as the spectrum of the operator
 of multiplication by
 $$b(s)=\frac{s - \frac{1}{2}}{s + \frac{1}{2}}$$ on $\Theta H^2(\C^+)$
 for an appropriate inner function $\Theta$ on $\C^{+}$.
 This is independent of $\Theta$ as we can solve
 $$(b-\lambda)\Theta f=\Theta g$$
 for $f,g \in H^2(\C^+)$ precisely when we can solve
 $(b-\lambda)  f=  g$. The spectrum of the analytic Toeplitz operator
 $M_b$ on $H^2(\C^+)$ is easily seen to be $\overline{b(\C^+)}=\overline\D$  (this is
 a special case of what is sometimes known
 as Wintner's theorem \cite[p.~365]{MR4545809}). 
\end{proof}

We see also that
$$\operatorname{dim} (\mathcal{M}/(I - C) \mathcal{M})= 1$$
for all $C$-invariant subspaces $\mathcal{M} \subset (u_{\alpha} H^2)^{\perp}$,
since
$$\operatorname{dim} \Big((u_{\alpha} H^2)^{\perp}/(I - C) (u_{\alpha} H^2)^{\perp}\Big) =
\operatorname{dim} (\Theta H^2(\C^{+})/b \Theta H^2(\C^{+})),$$ 
 and this last quantity is equal to one. Indeed, it can be shown, more generally, that every invariant
 subspace for $I-C$ has index 1, using the main theorem of
 Aleman, Richter and Sundberg \cite{MR2480609}. We discuss this further in Section
 \ref{ramrekresas}.

\begin{Remark}
Suppose that $\M$ is a closed $C$-invariant subspace of $H^2$. For fixed $\alpha > 0$ consider
$$\overline{\M + (u_{\alpha} H^2)^{\perp}}.$$
Notice how this is a $C$-invariant subspace that contains $(u_{\alpha} H^2)^{\perp}$. Thus, by Theorem \ref{indewejrhewf777}, either
$$\overline{\M + (u_{\alpha} H^2)^{\perp}} = (u_{\beta} H^2)^{\perp}$$ for some $\beta \geq \alpha$ or
$$\overline{\M + (u_{\alpha} H^2)^{\perp}} = H^2.$$
In the first case we have $\M \subset (u_{\beta} H^2)^{\perp}$ and, by Theorem \ref{zbzbssjjquUU}, we can, in a sense, describe $\M$.
Notice how the second possibility above says that $\M^{\perp} \cap u_{\alpha} H^2= \{0\}$.
An an example of when this occurs, one can consider a finite co-dimensional invariant subspace $\M$ consisting of eigenfunctions of $C$. Then $\M^{\perp} \cap u_{\alpha} H^2 = \{0\}$ and so $\overline{\M + (u_{\alpha} H^2)^{\perp}} = H^2$.
\end{Remark}

\section{Model spaces and the Kriete--Trutt space}

Results from the previous section bring up further interesting things one can say about the lattice of invariant subspaces for $M_z$ on the Kriete--Trutt space $\mathcal{H}^2(\mu)$. The Kriete--Trutt transform
$$K: H^2 \to \mathcal{H}^2(\mu), \quad (K f)(z) = \langle f, q_{\overline{w}}\rangle_{H^2},$$
where
$$q_w(z) = (1 - z)^{\frac{w}{1 - w}}$$
from \eqref{78uqhwebnfmnewhgrygt7h}
is a unitary operator which maps $C$-invariant subspaces of $H^2$ to $M_{z}$-invariant subspaces of $\mathcal{H}^2(\mu)$. What type of $M_z$-invariant subspace is $K ((u_{\alpha} H^2)^{\perp})$?

To begin to unpack this, we require a few further details about $\mathcal{H}^2(\mu)$ from \cite{MR350489}. The first \cite[Thm.~4]{MR350489} (and mentioned earlier in \S \ref{seconduuus}) is that the inclusion map
$$i: H^2 \to \mathcal{H}^2(\mu), \quad i(f) = f,$$
is bounded. The second \cite[Lemma~4]{MR350489}  is that if $u$ is an inner function whose boundary spectrum $\Sigma(u)$ from \eqref{spectrum} does not contain the point  $1$, then the multiplication operator
$f \mapsto u f$ is bounded below on $\mathcal{H}^2(\mu)$ and so $u \mathcal{H}^2(\mu)$ is a closed subspace of $\mathcal{H}^2(\mu)$. Third \cite[Cor.~1]{MR350489}, for inner functions $A$ and $B$,
$$\overline{A \mathcal{H}^2(\mu)} = \overline{B \mathcal{H}^2(\mu)} \quad \mbox{if and only if $u_s A = c u_t B$}$$
for some $|c| = 1$ and some $s, t \geq 0$, where $u_t$ is the standard atomic inner function from \eqref{bkkCKCIi}. Thus,
an inner function $B$ is cyclic for $M_z$ on $\mathcal{H}^2(\mu)$ if and only if $B = c u_t$. They also note that
$$K \frac{1}{1 - (1 - a) z} = e^{t} u_{t}, \quad t = - (\log a)/2, \quad 0 < a < 1.$$
This shows that 
$$\frac{1}{1 - (1 - a) z} $$
 is a {\em cyclic vector} for $C$, meaning
 $$\overline{\operatorname{span}}\Big\{C^n \frac{1}{1 - (1- a) z}: n \geq 0\Big\} = H^2.$$
 Fourth \cite[Thm.~5]{MR350489}, for a closed  $M_{z}$-invariant subspace $\mathcal{M}$ of $\mathcal{H}^2(\mu)$, we have that
$\mathcal{M} = \overline{u \mathcal{H}^{2}(\mu)}$ for some inner function $u$ if and only if $\mathcal{M} \cap i(H^2)$ is dense in $\mathcal{M}$.

 One can describe $K((u_{\alpha} H^2)^{\perp})$ as a ``liminf'' space as follows. A theorem of Tumarkin \cite[Thm.~4.3.1]{Douglas} says that for a given $\alpha > 0$,  there is a sequence $\{B_n\}_{n \geq 1}$ of finite Blaschke products with simple zeros such that
$$(u_{\alpha} H^2)^{\perp}= \varliminf (B_n H^2)^{\perp},$$
meaning that given any $f \in (u_{\alpha} H^2)^{\perp}$ there are $f_n \in (B_n H^2)^{\perp}$ such that $f_n \to f$ in $H^2$ norm. Note that
$$(B_n H^2)^{\perp} = \operatorname{span} \Big\{\frac{1}{1 - \overline{\lambda} z}: \lambda \in B_n^{-1}(\{0\})\Big\}$$
is a convenient space of rational functions \cite[Prop.~5.6]{MR3526203}.

If $\lambda \in \D$ and $k_{\lambda}(z) = (1 - \overline{\lambda} z)^{-1}$, then \cite[p.~203]{MR350489}
$$(K k_{\lambda})(z) = (1 - \lambda)^{\frac{z}{1 - z}}.$$
When $0 < \lambda < 1$,
\begin{equation}\label{Caychcyexp}
(K k_{\lambda})(z) = c_{\lambda} \exp\Big(-a_{\lambda} \frac{1 + z}{1 - z}\Big)
\end{equation}
for some positive constants $c_{\lambda}$ and $a_{\lambda}$. As mentioned earlier, $k_{\lambda}$ is cyclic for $C$ and so $Kk_{\lambda}$ is cyclic for $M_{z}$ on $\mathcal{H}^2(\mu)$.

The discussion above says that
$$K((u_{\alpha} H^2)^{\perp}) = \varliminf \Big\{ \operatorname{span}\big\{(1 - \lambda)^{\frac{z}{1 - z}}: \lambda \in B_n^{-1}(\{0\})\big\}\Big\},$$
meaning that given any $g \in K((u_{\alpha} H^2)^{\perp})$, there are
$$g_n \in \operatorname{span}\big\{(1 - \lambda)^{\frac{z}{1 - z}}: \lambda \in B_n^{-1}(\{0\})\big\}$$
with $g_n \to g$ in $\mathcal{H}^2(\mu)$.
But what does $K((u_{\alpha} H^2)^{\perp})$ really contain? Is
$$K((u_{\alpha} H^2)^{\perp}) = \overline{u \mathcal{H}^2(\mu)}$$ for some inner function $u$?
 The answer is no .

 \begin{Proposition}\label{nosthnehrenaasdd}
 For any $\alpha > 0$, the $M_z$-invariant subspace $K((u_{\alpha} H^2)^{\perp})$ of $\mathcal{H}^{2}(\mu)$ is not equal to $\overline{u \mathcal{H}^2(\mu)}$ for any inner function $u$.
 \end{Proposition}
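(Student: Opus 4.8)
The plan is to argue by contradiction, combining the fourth cited fact from \cite{MR350489} with the structure of the model space already worked out in the previous section. Suppose, for some $\alpha > 0$, that $K((u_{\alpha} H^2)^{\perp}) = \overline{u \mathcal{H}^2(\mu)}$ for some inner function $u$. By \cite[Thm.~5]{MR350489} (the fourth listed property), such a representation holds if and only if $\mathcal{M} \cap i(H^2)$ is dense in $\mathcal{M} := K((u_{\alpha} H^2)^{\perp})$, where $i: H^2 \to \mathcal{H}^2(\mu)$ is the (bounded, injective) inclusion. So it suffices to show that $K((u_{\alpha} H^2)^{\perp}) \cap i(H^2)$ is \emph{not} dense in $K((u_{\alpha} H^2)^{\perp})$; equivalently, pulling back through the unitary $K$, that $(u_{\alpha} H^2)^{\perp} \cap K^{-1}(i(H^2))$ is not dense in $(u_{\alpha} H^2)^{\perp}$.

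The second step is to identify $K^{-1}(i(H^2))$ concretely. Since $i(H^2) = \{K K^{-1} i(g): g \in H^2\}$, an element $K^{-1}(i(g))$ is the function $h \in H^2$ with $K h = g$ pointwise on $\D$; that is, $h$ is the preimage under $K$ of a genuine $H^2$-function rather than a general element of $\mathcal{H}^2(\mu)$. Because $\mathcal{H}^2(\mu)$ strictly contains $i(H^2)$ (the inclusion is bounded but far from surjective — $\mathcal{H}^2(\mu)$ has a huge supply of functions not in $H^2$, as is visible from its reproducing kernels $(1-\lambda)^{z/(1-z)}$, which are not bounded near $z=1$ and not even in $H^2$ for $\lambda$ near the boundary), $K^{-1}(i(H^2))$ is a proper dense-range-but-not-closed subspace of $H^2$. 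The key point I want to extract is: the functions $(K k_\lambda)(z) = (1-\lambda)^{z/(1-z)}$, $\lambda \in \D$, span a dense subset of $\mathcal{H}^2(\mu)$, but for $0 < \lambda < 1$ these are $c_\lambda u_{a_\lambda}$ up to constants (formula \eqref{Caychcyexp}), i.e. atomic \emph{inner} functions for $\C^+$, which are \emph{not} in $i(H^2)$ in a way that survives the density question inside $\mathcal{M}$.

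Here is the argument I expect to make precise. By the third listed fact, $\overline{u \mathcal{H}^2(\mu)} = \overline{B \mathcal{H}^2(\mu)}$ forces $u_s u = c\, u_t B$, so $M_z$-cyclic-up-to-Beurling structure of $\mathcal{M}$ is rigid; in particular if $\mathcal{M} = \overline{u\mathcal{H}^2(\mu)}$ then $K^{-1}(\mathcal{M}) = (u_\alpha H^2)^\perp$ must be expressible, after the unitary $K$, in a way compatible with a \emph{single} inner function $u$ on $\D$. But the previous section (Theorem \ref{zbzbssjjquUU} and the discussion preceding Proposition \ref{nosthnehrenaasdd}) shows that $(u_\alpha H^2)^\perp$ is realized as $\varliminf (B_n H^2)^\perp$ for finite Blaschke products $B_n$ whose zeros accumulate only at $1$ (their associated $u_{\alpha}$ has $\Sigma(u_\alpha) = \{1\}$), and the $K$-images of the corresponding reproducing kernels are the atomic \emph{inner} functions $u_{a_\lambda}$, each of which is \emph{cyclic} for $M_z$ on $\mathcal{H}^2(\mu)$. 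So every $g_n$ approximating a given $g \in \mathcal{M}$ lies in the span of \emph{cyclic} vectors. If $\mathcal{M} = \overline{u\mathcal{H}^2(\mu)}$ with $u$ nonconstant inner, then $\mathcal{M} \neq \mathcal{H}^2(\mu)$, yet $\mathcal{M}$ contains $i(H^2)$-elements densely; I will derive a contradiction by showing $1 \in \overline{\mathcal{M} \cap i(H^2)}$ would then force $1 \in \mathcal{M}$ and hence $\mathcal{M} = \mathcal{H}^2(\mu)$ (since $1$ is cyclic), i.e. $\mathcal{M}$ cannot be both a proper Beurling-type subspace and a $\varliminf$ of spans of cyclic kernels. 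Concretely: the reproducing kernels $K k_\lambda$ with $\lambda \in (0,1)$ lie in $K(H^2)$ and are cyclic, and a suitable sequence of these (with $\lambda \to 1$) lies in $\mathcal{M} \cap i(H^2)$ while converging to a cyclic limit, which would make $\mathcal{M}$ all of $\mathcal{H}^2(\mu)$, contradicting $\mathcal{M} = \overline{u\mathcal{H}^2(\mu)} \subsetneq \mathcal{H}^2(\mu)$.

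The main obstacle, and the step requiring the most care, is verifying that $\mathcal{M} = K((u_\alpha H^2)^\perp)$ really does contain a dense-in-$\mathcal{M}$ family drawn from $i(H^2)$ under the \emph{hypothesis}, and then turning the cyclicity of those generators into a contradiction with properness — essentially, ruling out that the $\varliminf$ construction could conspire to land exactly inside $\overline{u\mathcal{H}^2(\mu)}$ for a nonconstant $u$ whose boundary spectrum is $\{1\}$. I would handle this by invoking the fourth fact to reduce ``$\mathcal{M}$ is Beurling-type'' to ``$\mathcal{M}\cap i(H^2)$ dense in $\mathcal{M}$'', then showing directly that the only candidate $u$ would have to satisfy $u \mathcal{H}^2(\mu)$-equivalence with some $u_t$ (third fact), forcing $u$ itself cyclic and hence $\mathcal{M} = \mathcal{H}^2(\mu)$, i.e. $(u_\alpha H^2)^\perp = H^2$ — absurd. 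The delicate point is the interplay between the $H^2$-boundedness (not isometry) of $i$ and the weak-convergence/normal-family arguments used in \S\ref{seconduuus}, which I would reuse to control the $g_n$.
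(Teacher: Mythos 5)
There is a genuine gap here: the central step of your argument is false, and the hard cases of the statement are never addressed. You claim that ``a suitable sequence of [the kernels $Kk_\lambda$] (with $\lambda \to 1$) lies in $\mathcal{M} \cap i(H^2)$'' where $\mathcal{M} = K((u_{\alpha}H^2)^{\perp})$. But each $Kk_\lambda$ is a cyclic vector for $M_z$ on $\mathcal{H}^2(\mu)$, so \emph{none} of them can belong to the proper invariant subspace $\mathcal{M}$; indeed $k_\lambda \notin (u_\alpha H^2)^{\perp}$ (the reproducing kernel of the model space is $\big(1-\overline{u_\alpha(\lambda)}u_\alpha(z)\big)/(1-\overline{\lambda}z)$, not $k_\lambda$ itself). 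Your fallback inference --- that because the Tumarkin approximants $g_n$ lie in spans of cyclic vectors, $\mathcal{M}$ cannot be a proper Beurling subspace --- proves far too much: all of $\mathcal{H}^2(\mu)$ is spanned by cyclic vectors, and every closed subspace is a limit of elements of such spans, so this reasoning would show no proper invariant subspace exists at all. Likewise, the third fact from Kriete--Trutt only characterizes when $\overline{A\mathcal{H}^2(\mu)} = \overline{B\mathcal{H}^2(\mu)}$ for \emph{two given} inner functions; it does not force an arbitrary inner $u$ to be equivalent to some $u_t$, so the step ``forcing $u$ itself cyclic'' is unjustified. Finally, reducing the problem via the fourth fact to ``$\mathcal{M}\cap i(H^2)$ is not dense in $\mathcal{M}$'' is a legitimate reformulation, but you give no actual proof of non-density, and that statement is at least as hard as the proposition itself.

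For comparison, the paper's proof is a case analysis on the putative inner function $u$ with $K((u_\alpha H^2)^{\perp}) = \overline{u\mathcal{H}^2(\mu)} = \overline{uH^2}$: (i) $u = u_t$ is excluded because $u_t$ is (a constant multiple of) $Kk_\lambda$ and hence cyclic, while $\mathcal{M}$ is proper; (ii) $u$ cannot have a Blaschke factor, since a zero of $u$ at $\lambda_0$ would force the \emph{outer} function $(1-w)^{\lambda_0/(1-\lambda_0)}$ into $u_\alpha H^2$; (iii) $u$ cannot be singular with more than one atom, since then the invariant subspaces containing $\overline{uH^2}$ would fail to be totally ordered, contradicting Theorem \ref{indewejrhewf777}; (iv) $u$ cannot be a single atom at $p \in \T\setminus\{1\}$, by the identity $K(S^*g - g) = -Kg\circ\phi$ with $\phi(z) = 1/(2-z)$, which makes $K(S^*g-g)$ analytic near $p$ and contradicts membership in $u\mathcal{H}^2(\mu)$. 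Your proposal engages only with a shadow of case (i) and offers no mechanism for (ii)--(iv), which is where the real content of the proposition lies.
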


\begin{proof}

First note that $\overline{u H^2} = \overline{u \mathcal{H}^2(\mu)}$, since $H^2$ is a dense subspace
of $\mathcal{H}^2(\mu)$ and multiplication by $u$ is a bounded operator on both $H^2$ and $\mathcal{H}^2(\mu)$.

Certainly
$$K((u_{\alpha} H^2)^{\perp}) \not = \overline{u_t H^2}$$
for any $t > 0$
since, by  \eqref{Caychcyexp}, $u_t$ corresponds to a Cauchy kernel via $K$ and these are cyclic vectors for $C$.

Is $K((u_{\alpha} H^2)^{\perp}) = \overline{uH^2}$ where the inner function $u$ has a Blaschke factor? No, since otherwise, $u(\lambda_0) = 0$ for some $\lambda_0 \in \D$ and thus
$$0 = (K f)(\lambda_0) = \langle f(w), (1 - w)^{\frac{\lambda_0}{1 - \lambda_0}}\rangle_{H^2} \quad \mbox{for all $f \in (u_{\alpha} H^2)^{\perp}$}.$$
This would mean that
$$(1 - w)^{\frac{\lambda_0}{1 - \lambda_0}} \in u_{\alpha} H^2.$$
But this cannot be since $(1 - w)^{\lambda_0/(1 - \lambda_0)}$ is outer (Beurling's theorem).

Thus,  if $K((u_{\alpha} H^2)^{\perp}) = \overline{u H^2}=\overline{u\mathcal{H}^2(\mu)}$, then the inner function $u$ must be a singular inner function $s_{\nu}$.
Now, unless the measure $\nu$
consists of a single atom,   $u$ has nonconstant inner factors $J_1$ and $J_2$  with associated singular measures with disjoint supports.
In this case, it follows that the $C$-invariant subspaces containing $\overline{uH^2}$
are not totally ordered by inclusion, and so, by Theorem \ref{thm:onlymodel}, we cannot have $K((u_{\alpha} H^2)^{\perp}) = \overline{u H^2}$.

Finally, suppose that $K((u_{\alpha} H^2)^{\perp}) = \overline{u H^2} = u \mathcal{H}^2(\mu)$ where $u$ is a singular
inner function corresponding to a point mass at $p \in \T \setminus \{1\}$. From our discussion above, note that $u \mathcal{H}^2(\mu)$ is closed in $\mathcal{H}^2(\mu)$.
Then there is a function $g \in (u_{\alpha} H^2)^{\perp}$ such that $K g = u$. Now, and this is interesting on its own, if
$f \in H^2$, then
\[
\langle S^* f,  (1 - w)^{\frac{\bar z}{1 - \bar z}}\rangle_{H^2}
= \langle f, (w-1+1)  (1 - w)^{\frac{\bar z}{1 - \bar z}}\rangle_{H^2}.
\]
So that   $ K(S^*f  - f)(z)$ is $- Kf (\phi(z))$
where
\[
\frac{\phi(z)}{1-\phi(z)}=  \frac{z}{1-z}+1 ;
\]
 that is, $\phi(z)=1/(2-z)$. But now $S^*g-g$ belongs to $(u_{\alpha} H^2)^{\perp}$
and so $K(S^* g- g) \in u \mathcal{H}^2(\mu)$. However,  $- Kg (\phi(z))$ is analytic in
an open neighborhood of $p$, which is a contradiction unless $S^*g-g=0$, which is impossible.
\end{proof}

\begin{Remark}
The $M_z$-invariant subspaces $\mathcal{M}_{\alpha} := K((u_{\alpha} H^2)^{\perp})$ are not the ``standard'' Beurling-type $M_z$-invariant subspaces $\overline{u \mathcal{H}^{2}(\mu)}$. Moreover,  via Theorem \ref{indewejrhewf777}, they also inherit the interesting property that if $\mathcal{M}$ is an $M_z$-invariant subspace that contains $\mathcal{M}_{\alpha}$ for some $\alpha$, then $\mathcal{M} = \mathcal{M}_{\beta}$ for some $\beta \geq \alpha$.
\end{Remark}

\section{Cyclic subspaces}
\label{sec:cylicindex}

Recall that $I-C$   is unitarily equivalent to multiplication by the function $z$ on
the Kriete--Trutt space $\mathcal{H}^2(\mu)$ and is bounded below (recall \eqref{bbbbel}).
Thus, we may discuss the {\em index}
$$\operatorname{dim}(\mathcal{M}/ z \mathcal{M})$$
of an invariant
subspace $\mathcal{M}$. See \cite{MR0911086} for some general facts about the index of an invariant subspace of $M_z$ on Banach spaces of analytic functions.
We  start with some preliminary observations
about $C$ itself, which is not bounded below.

\begin{Proposition}\label{prop:apr28a}
Let $\mathcal{M}$ be a closed subspace of $\mathcal{H}^{2}(\mu)$ that is  invariant under $M_{1-z}$, and hence $M_z$. Then
$(1-z)\mathcal{M}$ is dense in $\mathcal{M}$.
\end{Proposition}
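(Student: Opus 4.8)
The plan is to argue by contradiction. Since $\mathcal{M}$ is $M_{1-z}$-invariant we have $(1-z)\mathcal{M}\subset\mathcal{M}$, so $\overline{(1-z)\mathcal{M}}$ is a closed subspace of $\mathcal{M}$; if it were proper there would be a nonzero $h\in\mathcal{M}$ with $h\perp(1-z)\mathcal{M}$. The whole argument will use only a single instance of this orthogonality, namely $h\perp(1-z)h$ (legitimate because $h\in\mathcal{M}$), together with the trivial observation that multiplication by $z$ is a contraction on $\mathcal{H}^2(\mu)$, since $|z|\le 1$ on $\operatorname{supp}\mu\subset\overline{\D}$.

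Carrying this out: from $\langle(1-z)h,h\rangle_{L^2(\mu)}=0$ we obtain $\langle zh,h\rangle_{L^2(\mu)}=\langle h,h\rangle=\|h\|^2$. Combining this with $\|zh\|_{L^2(\mu)}\le\|h\|$ and the Cauchy--Schwarz inequality yields
\[
\|h\|^2=|\langle zh,h\rangle|\le\|zh\|\,\|h\|\le\|h\|^2,
\]
so equality holds throughout. The one point I would stress is that I want the equality case of Cauchy--Schwarz in its sharp form --- that $zh$ and $h$ are proportional in $L^2(\mu)$, not merely that $\|zh\|=\|h\|$ --- so that $zh=\lambda h$ $\mu$-a.e.\ for some scalar $\lambda$; feeding this back into $\langle zh,h\rangle=\|h\|^2$ forces $\lambda=1$. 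Hence $(z-1)h=0$ $\mu$-almost everywhere.

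Finally, since $\mu$ has no atom at $z=1$ --- the restriction of $\mu$ to each circle $\gamma_n$ is mutually absolutely continuous with arc length, and $\{1\}$ has zero arc length --- the relation $(z-1)h=0$ forces $h=0$ $\mu$-a.e., i.e.\ $h=0$ in $\mathcal{H}^2(\mu)$, contradicting $h\ne0$. Thus $(1-z)\mathcal{M}$ is dense in $\mathcal{M}$. I do not expect a genuine obstacle here; the only places to be careful are the two just flagged, namely using the proportionality form of the Cauchy--Schwarz equality rather than just the norm equality, and invoking the absence of a point mass of $\mu$ at $1$. I would also remark that, transported to $H^2$ via the Kriete--Trutt unitary $K$ of Proposition \ref{KTTTT} (under which $M_{1-z}$ becomes $C$ and $M_z$ becomes $I-C$), the statement says exactly that $\overline{C\mathcal{N}}=\mathcal{N}$ for every $C$-invariant subspace $\mathcal{N}$ of $H^2$, and the same computation can equally well be run there using $\|I-C\|=1$ and the injectivity of $C$ from Proposition \ref{BrownShields}.
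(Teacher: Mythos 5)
Your argument is correct, and it takes a genuinely different route from the paper's. The paper proceeds constructively: it takes the polynomials $p_n(z) = 1 - (z + z^2 + \cdots + z^n)/n$, which vanish at $1$ and hence factor as $(1-z)q_n(z)$, and shows that $p_n h \to h$ in $\mathcal{H}^2(\mu)$ for every $h$ (first for monomials, then using $|p_n|\le 2$ on $\overline{\D}$ together with the density of polynomials), so that $(1-z)q_n h$ already approximates $h$ from within $(1-z)\mathcal{M}$. You instead run an orthogonality argument: a nonzero $h\in\mathcal{M}\ominus\overline{(1-z)\mathcal{M}}$ satisfies $\langle zh,h\rangle=\|h\|^2$, and since $M_z$ is multiplication by a function of modulus at most $1$ on $L^2(\mu)$, the equality case of Cauchy--Schwarz (in the proportionality form you correctly insist on) forces $zh=h$ $\mu$-a.e., which is impossible because $\mu(\{1\})=0$ --- each $\mu|_{\gamma_n}$ being mutually absolutely continuous with arc length, as you note. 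Both of the points you flagged are exactly the ones that need care, and both are handled. What each approach buys: yours is shorter and is really the general Hilbert-space fact that for a contraction $T$ with no nonzero fixed vectors in an invariant subspace $\mathcal{M}$, the space $(I-T)\mathcal{M}$ is dense in $\mathcal{M}$; the paper's proof yields an explicit approximating sequence $p_n(M_z)h\to h$, uniform over all invariant subspaces, and uses only polynomial density and the bound $\|M_{p_n}\|\le 2$. Your closing remark is also apt: transported to $H^2$, the same computation with $\|I-C\|=1$ and $\sigma_p(C)=\varnothing$ gives the Corollary directly and even avoids any reference to the measure $\mu$.
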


\begin{proof}

Let
$$p_n(z)=1-(z+z^2+...+z^{n})/n.$$
Since $p_{n}(1) = 0$, we have that $p_n(z)= (1-z) q_n(z)$ for some $q_n \in \C[z]$.

By Parseval's theorem, observe that $\|p_n h - h \|_{H^2} \to 0$ for every monomial $h(z)=z^k$ (hence, by \eqref{injectioncontinuous}, in $\mathcal{H}^2(\mu)$ norm as well). Use this, along with the facts that $|p_n(z)| \leq 2$ for all $z \in \D$ and the density of $\C[z]$ in $\mathcal{H}^2(\mu)$, to see that
 $$ (1-z) q_n h= p_n h \to h$$ in $\mathcal{H}^{2}(\mu)$ norm for all $h \in \mathcal{H}^2(\mu)$. Now, if $h \in \mathcal{M}$ then
$$(1-z)q_n(z)h \in (1-z)\mathcal{M}$$ and the result follows.
\end{proof}

\begin{Corollary}
Let $\mathcal{M} \subset H^2$ be a closed invariant subspace for the Ces\`{a}ro operator $C$. Then $C \mathcal{M}$ is dense in $\mathcal{M}$.
\end{Corollary}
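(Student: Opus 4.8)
The plan is to push everything through the Kriete--Trutt transform and quote Proposition \ref{prop:apr28a}. Recall from Proposition \ref{KTTTT} that $K \colon H^2 \to \mathcal{H}^2(\mu)$ is unitary and $K(I-C)K^{*} = M_z$, which is the same as $KCK^{*} = M_{1-z}$. So the first step is: given a closed $C$-invariant subspace $\mathcal{M} \subset H^2$, form $\mathcal{N} := K\mathcal{M}$, a closed subspace of $\mathcal{H}^2(\mu)$, and observe that $M_{1-z}\mathcal{N} = KCK^{*}\mathcal{N} = KC\mathcal{M} \subset K\mathcal{M} = \mathcal{N}$, so $\mathcal{N}$ is invariant under $M_{1-z}$ (and hence, since $M_z f = f - M_{1-z}f$, also under $M_z$, as noted in the hypothesis of Proposition \ref{prop:apr28a}).

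The second step is to apply Proposition \ref{prop:apr28a} directly to $\mathcal{N}$: it yields that $(1-z)\mathcal{N} = M_{1-z}\mathcal{N}$ is dense in $\mathcal{N}$. The final step is to transport this back by the inverse of $K$. Since $K^{*}$ is unitary, hence a homeomorphism of $\mathcal{H}^2(\mu)$ onto $H^2$, it maps the dense subset $M_{1-z}\mathcal{N}$ of $\mathcal{N}$ onto a dense subset of $K^{*}\mathcal{N} = \mathcal{M}$. But $K^{*}M_{1-z}\mathcal{N} = K^{*}M_{1-z}K\mathcal{M} = C\mathcal{M}$, using $K^{*}M_{1-z}K = C$ once more. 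Therefore $C\mathcal{M}$ is dense in $\mathcal{M}$, as claimed.

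There is essentially no obstacle here: the statement is an immediate corollary of Proposition \ref{prop:apr28a} together with the unitary equivalence of Proposition \ref{KTTTT}, and the only thing to be careful about is the bookkeeping $KCK^{*} = M_{1-z}$ versus $K(I-C)K^{*} = M_z$, and the elementary observation that $M_{1-z}$-invariance of a subspace entails $M_z$-invariance so that Proposition \ref{prop:apr28a} genuinely applies.
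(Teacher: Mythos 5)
Your proof is correct and is exactly the argument the paper intends: the Corollary is stated immediately after Proposition \ref{prop:apr28a} precisely because the Kriete--Trutt unitary $K$ carries $C$ to $M_{1-z}$, so $C$-invariant subspaces correspond to $M_{1-z}$-invariant (hence $M_z$-invariant) subspaces and density transfers back under the unitary. Your bookkeeping of $KCK^{*}=M_{1-z}$ and the remark that $M_{1-z}$-invariance gives $M_z$-invariance are both accurate, so nothing is missing.
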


Recall that a  vector $\vec{x}$ in a Hilbert space $\mathcal{H}$ is a {\em cyclic vector} for a bounded operator $T$ on $\mathcal{H}$ if
$$\overline{\operatorname{span}}\{T^{n} \vec{x}: n \geq 0\} = \mathcal{H}.$$

From Theorem \ref{zbzbssjjquUU} we know that the operator $I-C$, when  restricted to $(u_\alpha H^2)^\perp$,
is similar to the operator $M_b$ of multiplication by $b(s)=(s-1/2)/(s+1/2)$ on $H^2(\C^+)$.
From this we can determine some cyclic vectors for $C$ when restricted to the model spaces $(u_{\alpha} H^2)^{\perp}$, $\alpha > 0$.

\begin{Proposition}\label{prop:cyclo}
For every $\lambda \in \C^+$ the function
$$\frac{1}{s+\lambda}$$
is a cyclic vector for $M_b$ on $H^2(\C^+)$. Moreover, for every inner function $\Theta \in H^\infty(\C^+)$
the function $$\frac{\Theta(s)}{s+\lambda}$$ is cyclic for the restriction of $M_b$
to $\Theta H^2(\C^+)$.
\end{Proposition}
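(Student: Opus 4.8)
The plan is to reduce the claim about $M_b$ on $H^2(\C^+)$ to a standard density statement about the analytic Toeplitz operator $M_b$, where $b(s)=(s-\tfrac12)/(s+\tfrac12)$ is an inner function on $\C^+$ (it is the image under the conformal map $\gamma$ of a degree-one Blaschke factor on $\D$, hence a single Blaschke factor for $\C^+$). The cyclic subspace generated by a vector $h$ under $M_b$ is $\overline{\operatorname{span}}\{b^n h : n\geq 0\}$, which equals $\overline{p(b)h : p\in\C[z]} = \overline{\{\psi\circ b \cdot h : \psi \in H^\infty(\D)\}}$ in an appropriate sense. So the first step is to observe that $\overline{\operatorname{span}}\{b^n h:n\ge0\}=H^2(\C^+)$ precisely when $h$ is not divisible by any nonconstant inner function that is ``built from'' $b$, i.e.\ when $h$ and the ``$b$-saturated'' inner part interact trivially; concretely, since $b$ is a single Blaschke factor vanishing at the point $s_0=\tfrac12\in\C^+$, the subspace $\overline{\operatorname{span}}\{b^nh\}$ is the closure of $\{h\cdot(\text{functions in }H^\infty(\C^+)\text{ of the form }\psi\circ b)\}$.

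For the first assertion, take $h(s)=1/(s+\lambda)$ with $\lambda\in\C^+$. I would argue that $\overline{\operatorname{span}}\{b^n h : n\ge 0\}=H^2(\C^+)$ by a direct approximation: the functions $b^n/(s+\lambda)$, $n\ge0$, together with their linear span, must be dense. The cleanest route is to pull everything back to the disk via the unitary $\mathcal U^{-1}$ (or rather the analogous unitary $H^2(\C^+)\to H^2$ induced by $\gamma$), under which $b$ becomes the elementary Blaschke factor $\varphi_{s_0}$ and $1/(s+\lambda)$ becomes a nonzero scalar multiple of a reproducing (Cauchy) kernel $k_{\eta}$ at the point $\eta=\gamma^{-1}(-\lambda)\in\D$. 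Then the claim becomes: the Cauchy kernel $k_\eta$ is cyclic for multiplication by a single Blaschke factor $\varphi_{s_0}$ on $H^2$. This is classical: multiplication by a finite Blaschke product $B$ on $H^2$ has as its invariant subspaces exactly $\Theta H^2$ with $\Theta$ inner and ``$B$-invariant'', and the cyclic subspace generated by any function with no inner factor sharing zeros with $B$ (in particular any Cauchy kernel, which is outer away from its pole—and $k_\eta$ is in fact outer) is all of $H^2$. Equivalently, $\overline{\operatorname{span}}\{B^n k_\eta\}^\perp$ consists of $f\in H^2$ with $\langle B^n k_\eta,f\rangle = \overline{B(\eta)^n}\,\overline{f(\eta)}\cdots$—more carefully, one checks directly that a function orthogonal to every $B^n k_\eta$ must vanish identically, using that $\{B^n\}$ separates points (powers of a nonconstant inner function have dense span in $H^2$ is false, but $\overline{\operatorname{span}}\{B^n\}=H^2$ fails only to the extent of the model space $(BH^2)^\perp$, which a Cauchy kernel is never contained in). I will phrase this via: $k_\eta$ is outer, and an outer function is cyclic for any analytic Toeplitz operator with inner symbol.

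For the moreover, with $\Theta\in H^\infty(\C^+)$ inner I would show $\Theta/(s+\lambda)$ is cyclic for $M_b|_{\Theta H^2(\C^+)}$. Since $\Theta H^2(\C^+)$ is $M_b$-invariant and $M_b$ restricted to it is unitarily equivalent (via multiplication by $\Theta$, which is an isometry of $H^2(\C^+)$ onto $\Theta H^2(\C^+)$ intertwining $M_b$ with $M_b$) to $M_b$ on $H^2(\C^+)$, cyclicity of $\Theta h$ for the restriction is literally equivalent to cyclicity of $h$ for $M_b$ on $H^2(\C^+)$, which is the first part. So the second statement is immediate once the first is established. The main obstacle is thus entirely in the first part: proving a single Blaschke factor (equivalently $M_b$) has the Cauchy kernels as cyclic vectors. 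I expect this to be routine—it follows from the structure of invariant subspaces of $M_B$ for a finite Blaschke product $B$ (any invariant subspace is $IH^2$ for $I$ inner with $I/B^k$ eventually inner, so a cyclic subspace that is proper would force $k_\eta$ into some $IH^2$ with $I$ nonconstant inner, contradicting that $k_\eta$ is outer)—but writing it carefully without over-invoking machinery is the part that needs the most attention. I will present it by: (i) noting $M_b$ is unitarily equivalent to multiplication by an elementary Blaschke factor $\varphi_{s_0}$ on $H^2$; (ii) recalling that the cyclic subspace generated by $g\in H^2$ under $M_{\varphi_{s_0}}$ is $\overline{\{\psi\circ\varphi_{s_0}\cdot g:\psi\in H^\infty\}}$ and contains $g$, $(g - g(s_0)\cdot(\text{something}))/(\text{factor})$ type combinations allowing one to ``peel off'' the value at $s_0$; (iii) iterating to see that a proper cyclic subspace forces $g(s_0)$-type obstructions that an outer function cannot satisfy; and (iv) concluding, since $1/(s+\lambda)$ (resp.\ its disk image $k_\eta$) is outer.
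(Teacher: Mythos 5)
Your route is genuinely different from the paper's. The paper exhibits the explicit orthonormal basis $\tfrac{1}{\sqrt{2\pi}}\,(s-\tfrac12)^n/(s+\tfrac12)^{n+1}$, $n\geq 0$, of $H^2(\C^+)$, with respect to which $M_b$ is literally the unilateral shift of multiplicity one; $1/(s+\tfrac12)$ is then cyclic as a nonzero multiple of the first basis vector, and the general $1/(s+\lambda)$ is handled by multiplying by $(s+\tfrac12)/(s+\lambda)$, which is invertible in $H^\infty(\C^+)$ when $\lambda\in\C^+$ (an invertible analytic multiplier commutes with $M_b$ and is an isomorphism, hence preserves cyclicity). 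You instead transfer conformally to $\D$, identify $M_b$ with $M_B$ for a degree-one Blaschke factor $B$ and $1/(s+\lambda)$ with a constant multiple of an (outer) Cauchy kernel, and invoke ``outer implies cyclic.'' Your handling of the ``moreover'' --- multiplication by $\Theta$ is an isometry of $H^2(\C^+)$ onto $\Theta H^2(\C^+)$ intertwining $M_b$ with its restriction --- is exactly the paper's. The paper's choice makes the multiplicity-one shift structure visible at a glance; yours reduces cyclicity to the familiar notion of outerness.

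That said, two of the justifications you offer are false as stated and must be replaced by the degree-one fact before the argument is sound. First, ``an outer function is cyclic for any analytic Toeplitz operator with inner symbol'' fails already for $M_{z^2}$: the constant function $1$ is outer, yet $\overline{\operatorname{span}}\{z^{2n}: n\geq 0\}\neq H^2$. Second, the invariant subspaces of $M_B$ for a finite Blaschke product $B$ of degree $d\geq 2$ are \emph{not} exactly the Beurling spaces $\Theta H^2$; in that case $M_B$ is a shift of multiplicity $d$ and its lattice is the much larger one given by the vector-valued Beurling--Lax theorem. Your argument survives only because $b\circ\gamma$ has degree one, i.e.\ $B$ is a disk automorphism (up to a unimodular constant): then $M_B=C_B M_z C_B^{-1}$ with $C_B$ bounded and invertible, so $g$ is cyclic for $M_B$ if and only if $g\circ B^{-1}$ is cyclic for $M_z$, if and only if $g\circ B^{-1}$ is outer, if and only if $g$ is outer. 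State that (or, equivalently, that $\{\psi\circ B:\psi\in H^\infty(\D)\}=H^\infty(\D)$ for an automorphism $B$, so the cyclic subspace of $g$ contains $\overline{gH^\infty}$) as the lemma, and justify the passage from $\overline{\operatorname{span}}\{B^n g\}$ to $\overline{\{hg: h\in H^\infty(\D)\}}$ by a bounded pointwise (weak-$*$) approximation of $H^\infty$ functions by polynomials; with those repairs the proof is complete.
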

\begin{proof}
There is an orthonormal basis of $H^2(\C^+)$ given by
\[
\frac{1}{\sqrt{2\pi}}
\frac{(s-\frac{1}{2})^n}{(s+\frac{1}{2})^{n+1}},
 \quad n \geq 0.
\]
This can be seen most easily by transforming to $\D$ using the conformal mapping
$$s \mapsto \frac{s - \frac{1}{2}}{s + \frac{1}{2}}.$$ With respect to this basis, $M_b$ is is a unilateral shift
(so we see again that its spectrum is $\overline\D$). This tells us that
 $1/(s+1/2)$ is cyclic, as a nonzero multiple of the first
basis vector.
Now a vector remains cyclic for $M_b$  when multiplied by an invertible function in $H^\infty(\C^+)$,
and in particular,
$$\frac{s+\frac{1}{2}}{s+\lambda}.$$ (We have many other choices but these ones
lead to simpler expressions.)
The result for $\Theta H^2(\C^+)$ follows easily.
\end{proof}

We now track the vectors from Proposition \ref{prop:cyclo} back to the original $H^2$ setting. For $\alpha>0$  write $a=\log \alpha$.
There is a surjective isomorphism $L_a$ between the spaces
$L^2((-\infty,a), w(y) \, dy)$ and $H^2(\C^+)$ given by
\[
(L_a f)(s)= \int_{-\infty}^a f(y) e^{-s(a-y)} \, dy.
\]
Taking $f(y)=e^{\lambda y}$, we conclude from
Proposition \ref{prop:cyclo} that this is a cyclic vector for
the operator $L_a^{-1}M_bL_a$ on $L^2(-\infty,a)$ corresponding to $I-C$ (and hence for the operator corresponding to $C$) as in Theorem \ref{zbzbssjjquUU}.
Note that, since we are restricting $w$ to $(-\infty,a)$,
on which it is bounded above and below (Figure \ref{Figure_weight}), we can drop all
references to $w$ from now on.

We now recall the operator $(Th)(x)=x^{-1/2}h(\log x)$ from \eqref{TTTTT} to transfer this to
$L^2(0,\alpha)$ and obtain the cyclic vectors
$$f_\lambda(x)=x^{-\frac{1}{2}} x^{\lambda}.$$

Let us calculate the Laplace transform of $f_\lambda \in L^2(0, \alpha)$, i.e.,
\[
\int_0^\alpha x^{-1/2} x^{\lambda} e^{-sx} \, dx
\]
to obtain a cyclic vector for the unitarily equivalent form of $C$ on the model space
$(e^{-\alpha s}H^2(\C^+))^\perp$.
The answer is too complicated to be usable for most values of $\lambda$ but if we take $\lambda=\tfrac{1}{2}$
we obtain
$$\dfrac{1-e^{-\alpha s}}{s}.$$

Finally,
using $\mathcal{U}^{-1}$ defined in \eqref{UUUinverse}, we obtain a cyclic vector for $C$ on $(u_\alpha H^2)^\perp$, namely
\[
g_\alpha(z)= \frac{1-u_\alpha(z)}{1+z} .
\]
The singularity at $-1$ for $g_{\alpha}$ is removable. Also note that $g_{\alpha}$ belongs to $(u_\alpha H^2)^\perp$ since
if $h=(z+1)k \in (z+1)H^2$  then
\[
\langle g_\alpha , u_\alpha h \rangle_{H^2} =\langle 1-u_\alpha,  u_\alpha z k \rangle_{H^2} =0,
\]
and this is true for a dense set of $h$ hence for all $h \in H^2$. We summarize this discussion with the following.

\begin{Proposition}\label{cyclicCCC}
For $\alpha > 0$, the vector
$$\frac{1 - u_{\alpha}(z)}{1 + z}$$ belongs to $(u_{\alpha} H^2)^{\perp}$ and is a cyclic vector for the restriction of $C$  to $(u_{\alpha} H^2)^{\perp}$.
\end{Proposition}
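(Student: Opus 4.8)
The plan is to treat the two assertions — membership in $(u_\alpha H^2)^\perp$ and cyclicity — separately, cashing in the machinery of the preceding sections rather than arguing from scratch.

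\emph{Membership.} First I would note that $g_\alpha$ genuinely lies in $H^2$: since $u_\alpha(-1) = \exp(\alpha\cdot 0) = 1$, the numerator $1-u_\alpha$ vanishes at $-1$, so the singularity of $g_\alpha$ there is removable, and as $|u_\alpha|\le 1$ on $\D$ with $1+z$ bounded below away from $-1$, in fact $g_\alpha \in H^\infty$. For orthogonality to $u_\alpha H^2$ I would test against the dense subspace $(1+z)H^2$ — dense because $1+z$ is outer (Beurling) — and pass to boundary values on $\T$, using $|u_\alpha| = 1$ a.e.\ there: for $k \in H^2$ one gets $\langle g_\alpha, u_\alpha(1+z)k\rangle_{H^2} = \langle 1-u_\alpha, u_\alpha z k\rangle_{H^2}$, and the right side vanishes because $\langle u_\alpha, u_\alpha z k\rangle_{H^2} = \langle 1, zk\rangle_{H^2} = 0$ (multiplication by $u_\alpha$ is isometric, $zk$ has no constant term) while $\langle 1, u_\alpha z k\rangle_{H^2} = \overline{(u_\alpha z k)(0)} = 0$.

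\emph{Cyclicity.} Two structural facts drive the argument: first, cyclicity is preserved by bounded invertible intertwiners, and a vector spans the same cyclic subspace for $C$ as for $I-C$ (each being a polynomial in the other); second, by Theorem \ref{zbzbssjjquUU} the restriction of $I-C$ to $(u_\alpha H^2)^\perp$ is, through the explicit composite isomorphism assembled in its proof, similar to the analytic Toeplitz operator $M_b$ with $b(s) = (s-\tfrac12)/(s+\tfrac12)$ on $H^2(\C^+)$, for which Proposition \ref{prop:cyclo} supplies the cyclic vectors $(s+\lambda)^{-1}$, $\lambda \in \C^+$. It then remains to transport one such vector backward along the chain $H^2(\C^+) \to L^2((-\infty,a),w\,dy) \to L^2(-\infty,a) \to L^2(0,\alpha) \to (e^{-\alpha s}H^2(\C^+))^\perp \to (u_\alpha H^2)^\perp$, with $a = \log\alpha$. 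Concretely, $(s+\lambda)^{-1}$ corresponds to $e^{\lambda y}$ on $(-\infty,a)$ (dropping the weight $w$, legitimate since $w$ is bounded above and below there), hence under $T$ from \eqref{TTTTT} to $x^{\lambda-1/2}$ on $(0,\alpha)$; the decisive choice $\lambda = \tfrac12$ collapses this to the constant function on $(0,\alpha)$, whose normalized Laplace transform is the clean $(1-e^{-\alpha s})/s$; and a one-line application of $\mathcal{U}^{-1}$ from \eqref{UUUinverse}, using that $e^{-\alpha s}$ pulls back to $u_\alpha$, produces $g_\alpha$ up to the harmless scalar $2\sqrt{\pi}$.

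The genuinely routine steps are the Laplace integral $\int_0^\alpha e^{-sx}\,dx$ and the final conformal substitution. I expect the only real care to be demanded in the bookkeeping: confirming at each stage that the intertwining map is a topological isomorphism carrying the relevant (semi)group action across — so that ``cyclic vector'' does transfer — and in particular that, after restriction to $(-\infty,a)$, the weighted and unweighted $L^2$ spaces may be identified. In short, the main (modest) obstacle is verifying that the composite isomorphism of Theorem \ref{zbzbssjjquUU} sends $(s+\tfrac12)^{-1}$ to a nonzero multiple of $g_\alpha$, with no stray constant or conformal factor lost along the way.
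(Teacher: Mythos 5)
Your proposal is correct and follows essentially the same route as the paper: membership is checked by the same boundary-value pairing against the dense subspace $u_\alpha(1+z)H^2$, and cyclicity is obtained by taking the cyclic vector $(s+\tfrac12)^{-1}$ for $M_b$ from Proposition \ref{prop:cyclo} and transporting it back through the chain of isomorphisms built in the proof of Theorem \ref{zbzbssjjquUU}, with the choice $\lambda=\tfrac12$ yielding the constant function on $(0,\alpha)$, whose Laplace transform $(1-e^{-\alpha s})/s$ pulls back under $\mathcal{U}^{-1}$ to $g_\alpha$.
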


Even though $C|_{\mathcal{M}}$, where $\mathcal{M}$ is a closed $C$-invariant subspace contained in $(u_{\alpha} H^2)^{\perp}$, is cyclic, it seems that finding cyclic vectors cannot be done explicitly, except perhaps for some very simple inner functions $\Theta$ in Proposition \ref{prop:cyclo}.

For $w \in \D$ and
$$\lambda=\frac{\bar w}{1-\bar w} \in \{z \in \C: \re z > -\tfrac{1}{2}\},$$
we want an expression for
$$\langle g_\alpha, (1-z)^\lambda \rangle_{H^2},$$
the cyclic vector for $M_{z}$ when restricted to $K((u_{\alpha} H^2)^{\perp})$.
Since $\mathcal{U}$ from \eqref{UUU} is a unitary  operator, and thus preserves inner products, we may do the calculation
in $H^2(\C^+)$. Thus, to within irrelevant constants, we
obtain
\[
\left\langle \frac{1-e^{-\alpha s}}{s}, \frac{2^\lambda}{(s+1)^{\lambda+1}} \right\rangle_{H^2(\C^+)}.
\]
The first term in the inner product is the Laplace transform of the characteristic function
of $(0,\alpha)$, as we have seen already, while the second is the Laplace transform
of the function
$$\frac{2^\lambda e^{-t} t^\lambda}{\Gamma(\lambda+1)},$$
where $\Gamma$ is the standard gamma function.

Thus, since the normalized Laplace transform is unitary, we obtain the
 Kriete--Trutt cyclic vectors given by the function
\[
U_\alpha (w) := \frac{2^{w/(1-w)}}{\Gamma(1/(1-w))}\int_0^\alpha e^{-t} t^{w/(1-w)} \, dt, \quad w \in \D.
\]
This is summarized with the following.

\begin{Proposition}\label{117yyyY77yRR}
For $\alpha > 0$, the function $U_{\alpha}$ is a cyclic vector for $M_z$ when restricted to $K((u_{\alpha} H^2)^{\perp})$.
\end{Proposition}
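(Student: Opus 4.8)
The plan is to trace the cyclic vector through the chain of unitary and isomorphic identifications that was already assembled in the course of this section, so that the statement becomes essentially a bookkeeping consequence of Proposition~\ref{prop:cyclo} together with Proposition~\ref{KTTTT}. Concretely: by Proposition~\ref{cyclicCCC} we know that $g_\alpha(z)=(1-u_\alpha(z))/(1+z)$ is a cyclic vector for the restriction of $C$ to $(u_\alpha H^2)^\perp$. Since the Kriete--Trutt transform $K$ is a unitary operator satisfying $K(I-C)K^* = M_z$ on $\mathcal{H}^2(\mu)$, it follows that $K$ intertwines $C|_{(u_\alpha H^2)^\perp}$ with $(I - M_z)|_{K((u_\alpha H^2)^\perp)}$; in particular $K$ maps a cyclic vector for $C|_{(u_\alpha H^2)^\perp}$ to a cyclic vector for $M_z|_{K((u_\alpha H^2)^\perp)}$ (cyclicity is preserved because the closed linear span of the powers of $I - M_z$ applied to a vector equals the closed linear span of the powers of $M_z$ applied to it). So it suffices to identify $K g_\alpha$ and check it equals $U_\alpha$ up to an irrelevant nonzero constant.

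The main computational step is therefore to evaluate $(Kg_\alpha)(w) = \langle g_\alpha, q_{\overline w}\rangle_{H^2} = \langle g_\alpha, (1-z)^{\lambda}\rangle_{H^2}$ with $\lambda = \overline w/(1-\overline w)$, and this is exactly the inner product that has just been computed in the paragraph preceding the statement. The steps there are: (i) transport to $H^2(\C^+)$ via the unitary $\mathcal{U}$ of \eqref{UUU}, which sends $g_\alpha$ to (a constant multiple of) $(1-e^{-\alpha s})/s$ and sends $(1-z)^\lambda$ to (a constant multiple of) $(s+1)^{-\lambda-1}$, as recorded in the preceding section when the cyclic vector was first pushed through $\mathcal{U}^{-1}$; (ii) recognize $(1-e^{-\alpha s})/s = \mathcal{L}(\chi_{(0,\alpha)})$ and $(s+1)^{-\lambda-1} = \Gamma(\lambda+1)^{-1}\mathcal{L}(e^{-t}t^\lambda)$ up to constants, using the elementary Laplace transform identity $\int_0^\infty e^{-st}t^\lambda e^{-t}\,dt = \Gamma(\lambda+1)(s+1)^{-\lambda-1}$ valid for $\Re\lambda > -1$ (which holds since $\Re\lambda > -\tfrac12$ here); (iii) invoke unitarity of the normalized Laplace transform $\mathcal{L}$ to rewrite the $H^2(\C^+)$ inner product as $\int_0^\alpha e^{-t}t^\lambda\,dt$ times the accumulated constant, which after substituting $\lambda = w/(1-w)$ (using $\langle f,q_{\overline w}\rangle = (Kf)(w)$ correctly pairs the conjugate) yields precisely
\[
U_\alpha(w) = \frac{2^{w/(1-w)}}{\Gamma(1/(1-w))}\int_0^\alpha e^{-t}t^{w/(1-w)}\,dt.
\]

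I expect the only genuinely delicate points to be careful tracking of the conjugation (the transform $K$ pairs $f$ with $q_{\overline w}$, so the exponent that appears as $\overline w/(1-\overline w)$ in the $H^2$ inner product becomes $w/(1-w)$ in the analytic function $Kg_\alpha$), and the verification that all the suppressed multiplicative constants from $\mathcal{U}$, the normalization of $\mathcal{L}$, and the Laplace identities are genuinely independent of $w$, hence absorbable — the paper itself flags this by saying the answer is obtained ``to within irrelevant constants.'' Once those are in hand the proof is just the two-line assembly: $Kg_\alpha = (\text{const})\,U_\alpha$, $g_\alpha$ is cyclic for $C|_{(u_\alpha H^2)^\perp}$, $K$ is unitary and intertwines $C$ with $I-M_z$, hence $U_\alpha$ is cyclic for $M_z|_{K((u_\alpha H^2)^\perp)}$. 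No step requires anything beyond the results already proved in the excerpt.
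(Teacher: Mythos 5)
Your proposal is correct and follows essentially the same route as the paper: the paper likewise obtains $U_\alpha$ by computing $\langle g_\alpha, (1-z)^{\lambda}\rangle_{H^2}$ with $\lambda = \overline{w}/(1-\overline{w})$, transporting the inner product to $H^2(\C^{+})$ via $\mathcal{U}$, recognizing both factors as Laplace transforms (of $\chi_{(0,\alpha)}$ and of $2^{\lambda}e^{-t}t^{\lambda}/\Gamma(\lambda+1)$), and invoking unitarity of $\mathcal{L}$, after which cyclicity of $U_\alpha$ for $M_z$ follows from Proposition~\ref{cyclicCCC} and the intertwining $K(I-C)K^{*}=M_z$. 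Your explicit remark that cyclicity for $I-M_z$ and for $M_z$ coincide (equal closed spans of polynomials) is a point the paper leaves implicit, but it is correct and completes the argument in the same way.
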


The function $U_{\alpha}$ does not have a usable formula. However, we can say a few things.

\begin{Proposition}
The function $U_{\alpha}$ has no zeros on $\D$.
\end{Proposition}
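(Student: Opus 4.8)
The plan is to show that $U_\alpha$ has no zeros in $\D$ by recalling the defining formula
\[
U_\alpha(w) = \frac{2^{w/(1-w)}}{\Gamma(1/(1-w))}\int_0^\alpha e^{-t} t^{w/(1-w)}\,dt, \quad w \in \D,
\]
and examining each of its three factors separately. Write $\lambda = \lambda(w) = w/(1-w)$, so that $\Re\lambda > -\tfrac{1}{2}$ for all $w \in \D$ (this is exactly the observation recorded in \S\ref{sectsdfserKT}), and $\lambda+1 = 1/(1-w)$ so that $\Re(\lambda+1) > \tfrac12 > 0$. The prefactor $2^{w/(1-w)} = 2^{\lambda}$ is $\exp(\lambda \log 2)$, which never vanishes. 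The factor $1/\Gamma(1/(1-w)) = 1/\Gamma(\lambda+1)$: since $\Re(\lambda+1) > 0$, the argument $\lambda+1$ lies in the open right half-plane, on which $\Gamma$ is finite and nonzero (the poles of $\Gamma$ are at the nonpositive integers, all of which have nonpositive real part, and $\Gamma$ vanishes nowhere). Hence $1/\Gamma(\lambda+1)$ is a finite nonzero number for every $w \in \D$.

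The remaining task is to show that the integral $\int_0^\alpha e^{-t} t^{\lambda}\,dt$ is nonzero for every $w \in \D$, where $t^\lambda := \exp(\lambda \log t)$ with $\log t$ real. The idea is that this is a lower incomplete gamma integral over a \emph{bounded} interval, and it can be evaluated termwise by expanding $e^{-t} = \sum_{n\ge 0} (-t)^n/n!$ and integrating: since $\Re\lambda > -\tfrac12 > -1$, each term $\int_0^\alpha t^{\lambda+n}\,dt = \alpha^{\lambda+n+1}/(\lambda+n+1)$ is finite, the interchange of sum and integral is justified by uniform convergence of the exponential series on $[0,\alpha]$, and one obtains
\[
\int_0^\alpha e^{-t} t^\lambda\,dt = \sum_{n=0}^{\infty} \frac{(-1)^n \alpha^{\lambda+n+1}}{n!\,(\lambda+n+1)} = \alpha^{\lambda+1}\sum_{n=0}^{\infty}\frac{(-\alpha)^n}{n!\,(\lambda+n+1)}.
\]
Since $\alpha^{\lambda+1}\ne 0$, it suffices to show the series $g(\lambda) := \sum_{n\ge 0} (-\alpha)^n/(n!(\lambda+n+1))$ is nonzero. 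I would argue this by a positivity/sign argument: one can write $g(\lambda) = \int_0^1 s^{\lambda} e^{-\alpha s}\,ds$ (substituting $t = \alpha s$ back, up to the factor $\alpha^{\lambda+1}$ already pulled out), so we are really asking whether $\int_0^1 s^\lambda e^{-\alpha s}\,ds \ne 0$ for $\Re\lambda > -\tfrac12$. Here the cleanest route is: if this integral were zero, then so would be its product with $\overline{\alpha^{\lambda+1}}$ (irrelevant), but more usefully one notes $s^\lambda = s^{\Re\lambda} e^{i(\Im\lambda)\log s}$, so the integral is $\int_0^1 s^{\Re\lambda} e^{-\alpha s} e^{i(\Im\lambda)\log s}\,ds$, which is the Fourier-type transform of a \emph{strictly positive} $L^1$ weight against a character.

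The main obstacle is precisely showing this last integral cannot vanish; the subtlety is that it is an oscillatory integral in the imaginary direction of $\lambda$. The clean way to handle it: substitute $u = -\log s \in (0,\infty)$, $s = e^{-u}$, $ds = -e^{-u}\,du$, giving
\[
\int_0^1 s^\lambda e^{-\alpha s}\,ds = \int_0^\infty e^{-(\lambda+1)u} e^{-\alpha e^{-u}}\,du.
\]
With $\lambda+1 = \beta + i\tau$, $\beta = \Re(\lambda+1) > \tfrac12 > 0$, this is $\int_0^\infty e^{-\beta u} e^{-\alpha e^{-u}} e^{-i\tau u}\,du$. If this were $0$ for some real $\tau$, then the function $h(u) := e^{-\beta u} e^{-\alpha e^{-u}} \mathbf{1}_{(0,\infty)}(u)$ — which is real, continuous, strictly positive on $(0,\infty)$, and in $L^1(\R)$ — would have a zero of its Fourier transform at $\tau$; but one can rule this out directly by noting that $\widehat h(\tau) = \int_0^\infty e^{-\beta u} e^{-\alpha e^{-u}} e^{-i\tau u}\,du$ extends to an analytic function of $\tau$ in the strip $\Im \tau > -\beta$ (since $e^{-\beta u}$ gives exponential decay absorbing $e^{\Im\tau\cdot u}$), and this analytic function has strictly positive value at $\tau = i\beta$... actually the most robust argument is simply: for fixed $\beta > 0$, the Laplace transform $z \mapsto \int_0^\infty e^{-zu} e^{-\alpha e^{-u}}\,du$ is analytic in $\Re z > 0$ and, being the Laplace transform of a strictly positive function, is strictly positive on the positive real axis; if it had a zero at some point $z_0 = \beta + i\tau$ with $\beta > 0$, consider $z \mapsto \int_0^\infty e^{-zu}e^{-\alpha e^{-u}}\,du$ — evaluating at $z = \beta$ gives a positive real number, and I would instead invoke that the \emph{real part} $\Re\widehat{h}(\tau) = \int_0^\infty e^{-\beta u}e^{-\alpha e^{-u}}\cos(\tau u)\,du$; rather than fight the oscillation, I would use the substitution back to $U_\alpha$ being (up to a nonvanishing unitary-image factor) a Kriete--Trutt transform of something with known structure, OR simply appeal to the fact established implicitly in the paper that $U_\alpha$ is a \emph{cyclic} vector — a cyclic vector for $M_z$ on a space of analytic functions with the division property cannot vanish at a point of $\D$, since if $U_\alpha(w_0) = 0$ then $\overline{\operatorname{span}}\{z^n U_\alpha\}$ would consist of functions vanishing at $w_0$, a proper closed subspace, contradicting cyclicity. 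This last observation is almost certainly the intended one-line proof, using Proposition \ref{117yyyY77yRR} together with the fact that $\mathcal{H}^2(\mu)$ is a reproducing kernel Hilbert space: point evaluation at $w_0 \in \D$ is continuous, so $\{g \in \mathcal{H}^2(\mu): g(w_0) = 0\}$ is closed and $M_z$-invariant and proper, hence cannot contain a cyclic vector.
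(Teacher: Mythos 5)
Your fallback argument---the one you identify as ``almost certainly the intended one-line proof''---is indeed the route the paper takes, but as you state it there is a genuine gap. The cyclicity of $U_\alpha$ in Proposition \ref{117yyyY77yRR} is cyclicity for $M_z$ \emph{restricted to} $K((u_\alpha H^2)^\perp)$, which is itself a proper closed $M_z$-invariant subspace of $\mathcal{H}^2(\mu)$. So the observation that $\{g \in \mathcal{H}^2(\mu): g(w_0)=0\}$ is a proper closed invariant subspace of $\mathcal{H}^2(\mu)$ yields no contradiction: $U_\alpha$ already lies inside a proper closed invariant subspace, and $\overline{\operatorname{span}}\{z^n U_\alpha : n \geq 0\}$ is only required to equal $K((u_\alpha H^2)^\perp)$, not all of $\mathcal{H}^2(\mu)$. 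What must be ruled out is the possibility that \emph{every} function of $K((u_\alpha H^2)^\perp)$ vanishes at $w_0$, which is a priori perfectly consistent with cyclicity. The paper closes exactly this gap: if $(Kf)(w_0)=0$ for all $f \in (u_\alpha H^2)^\perp$, then by the definition \eqref{78uqhwebnfmnewhgrygt7h} of $K$ one has $\langle f, q_{\overline{w_0}}\rangle_{H^2}=0$ for all such $f$, i.e.\ $q_{\overline{w_0}} = (1-z)^{\overline{w_0}/(1-\overline{w_0})} \in u_\alpha H^2$; this is impossible because $q_{\overline{w_0}}$ is an outer function and so cannot belong to the Beurling subspace $u_\alpha H^2$. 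That outer-function step is the actual content of the proof and is absent from your write-up.

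Your primary, computational approach also does not close. After correctly disposing of the factors $2^{w/(1-w)}$ and $1/\Gamma(1/(1-w))$, everything reduces to showing that the incomplete gamma integral $\int_0^\alpha e^{-t}t^\lambda\,dt$, equivalently the Laplace transform $\int_0^\infty e^{-(\lambda+1)u}e^{-\alpha e^{-u}}\,du$, has no zeros for $\Re(\lambda+1)>\tfrac12$---and you explicitly stop there, acknowledging that ``the main obstacle is precisely showing this last integral cannot vanish.'' Positivity of the integrand on $(0,\infty)$ does not by itself prevent a Laplace transform from vanishing at complex points of the right half-plane, so this is not a step one can wave away; a self-contained analytic proof would need a genuinely new ingredient here. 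As written, neither branch of the proposal constitutes a complete proof, though the second branch becomes the paper's proof once the outer-function argument is supplied.
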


\begin{proof}
This will follow from the general fact that if $f \in (u_{\alpha} H^2)^{\perp}$ and
$$\overline{\operatorname{span}}\{C^n f: n \geq 0\} = (u_{\alpha} H^2)^{\perp},$$
then $K f$,
which will be a generator for the $M_z$-invariant subspace  $K( (u_{\alpha} H^2)^{\perp})$, has no zeros in $\D$.

Indeed, suppose $(Kf)(\lambda) = 0$ for some $\lambda \in \D$. Since $K C f(z) = (1 - z) K f(z)$ (Proposition \ref{KTTTT}), every function in
$$\overline{\operatorname{span}}\{(1 - z)^n K f(z): n \geq 0\}$$ would have a zero at $\lambda$ (recall \eqref{RKHS}). The previous identity would say that every function in
$$\overline{\operatorname{span}}\{K C^n f: n \geq 0\}$$ vanishes  at $\lambda$.
However, $f$ is a cyclic vector for $C|_{(u_{\alpha} H^2)^{\perp}}$ and so every function from $K((u_{\alpha} H^2)^{\perp})$ would vanish at $\lambda$. It follows that
$$\langle f, (1 - w)^{\frac{\overline \lambda}{1 - \overline\lambda}}\rangle_{H^2} = 0 \quad \mbox{for all $f \in (u_{\alpha} H^2)^{\perp}$}$$ which, in turn, implies that
$$(1 - z)^{\frac{\overline \lambda}{1 - \overline\lambda}} \in u_{\alpha} H^2.$$
This last statement is impossible since $(1 - z)^{\frac{\overline \lambda}{1 - \overline\lambda}}$ is an outer function and this can  not belong to any Beurling subspace $u_{\alpha} H^2$.
\end{proof}

\begin{Proposition}
The function $U_{\alpha}$ does not belong to $H^2$.
\end{Proposition}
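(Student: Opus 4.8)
The plan is to exploit the identity established in the construction preceding Proposition \ref{117yyyY77yRR}, namely that $U_\alpha = K g_\alpha$ where $g_\alpha(z) = (1 - u_\alpha(z))/(1+z) \in (u_\alpha H^2)^\perp$, together with the chain of unitary identifications $\mathfrak{F} = WT^{-1}\mathcal{L}^{-1}\mathcal{U}$ from \S\ref{two}. The key observation is that the Kriete--Trutt space $\mathcal{H}^2(\mu)$ is genuinely larger than $H^2$: the inclusion $i\colon H^2 \to \mathcal{H}^2(\mu)$ is bounded (by \cite[Thm.~4]{MR350489}) but not surjective, since $\mu$ has mass on the circles $\gamma_n$ that penetrate into $\D$, while $H^2$ "lives on $\T$." So to show $U_\alpha = Kg_\alpha \notin H^2$ — more precisely, that $Kg_\alpha$, viewed as an analytic function on $\D$, is not the Kriete--Trutt transform image $Kh$ of any $h \in H^2$ under the natural inclusion — one must show $U_\alpha$ fails the growth/integrability condition characterizing $K(i(H^2))$ inside $\mathcal{H}^2(\mu)$.

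First I would make precise what "$U_\alpha \in H^2$" should mean here. Since $U_\alpha$ is a concrete analytic function on $\D$ given by the integral formula
\[
U_\alpha(w) = \frac{2^{w/(1-w)}}{\Gamma(1/(1-w))} \int_0^\alpha e^{-t} t^{w/(1-w)}\, dt,
\]
the cleanest reading is: $U_\alpha$, as an analytic function, does not lie in $H^2(\D)$ (equivalently, $\sum |c_n|^2 = \infty$ where $c_n$ are its Taylor coefficients). I would attack this via the boundary behavior. The function $w \mapsto w/(1-w)$ maps $\D$ onto the half-plane $\{\re \zeta > -\tfrac12\}$, sending $w \to 1$ radially to $+\infty$. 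Along a ray approaching $w=1$, $\re(w/(1-w)) \to +\infty$, so $t^{w/(1-w)}$ oscillates wildly and $\Gamma(1/(1-w))$ grows super-exponentially; the plan is to show that the combination $2^{w/(1-w)}/\Gamma(1/(1-w))$ times the incomplete-gamma-type integral does not decay fast enough, so that $U_\alpha$ has a non-$L^2$ radial limit function, or fails the Littlewood--Paley criterion $\int_\D |U_\alpha'(w)|^2 (1-|w|^2)\,dm(w) < \infty$.

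The cleanest route, and the one I would push hardest, uses the transform side: $g_\alpha$ corresponds under $\mathfrak{F}$ to (a constant times) the characteristic function of an interval, namely under $\mathcal{U}$ then $\mathcal{L}^{-1}$, $g_\alpha \leftrightarrow \tfrac{1-e^{-\alpha s}}{s} \leftrightarrow \chi_{(0,\alpha)}$, which lies in $L^2(\R_+)$ with compact support. Meanwhile $U_\alpha = Kg_\alpha$; and the key point is that $K$ and the inclusion $i\colon H^2 \hookrightarrow \mathcal{H}^2(\mu)$ are related by $K = (\text{unitary onto }\mathcal{H})$, whereas membership of an $\mathcal{H}^2(\mu)$-function in the image $i(H^2)$ is a strictly stronger condition. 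Concretely, $Kf \in i(H^2)$ — i.e. equals the $\mathcal{H}^2(\mu)$-inclusion of some $H^2$ function — would force, by evaluating against the reproducing kernels and using that the $q_w$ for $0 < w < 1$ map under $K$ to the exponentials $c_w\exp(-a_w\frac{1+z}{1-z})$ (equation \eqref{Caychcyexp}), a decay estimate on $U_\alpha$ near $w = 1$ that I would show is violated. I would therefore compute the asymptotics of $U_\alpha(r)$ as $r \to 1^-$: using $\re(r/(1-r)) \to +\infty$ and Stirling for $\Gamma(1/(1-r))$, show $|U_\alpha(r)|$ behaves like (roughly) a shifted/scaled $\exp(-\text{const}/(1-r))$ type factor against which the $H^2$ norm diverges, or conversely grows polynomially in $1/(1-r)$ — either way establishing incompatibility with $H^2$ membership.

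The main obstacle will be the delicate asymptotic analysis of
\[
U_\alpha(r) = \frac{2^{r/(1-r)}}{\Gamma(1/(1-r))}\,\gamma\!\big(1/(1-r)+? , \alpha\big)
\]
(an incomplete gamma function) as $r \to 1^-$, where the exponent tends to $+\infty$: one needs the correct balance between the super-exponential growth of $\Gamma$, the exponential $2^{r/(1-r)}$, and the incomplete gamma integral $\int_0^\alpha e^{-t}t^{s}\,dt$ with $s \to \infty$, which is dominated by the endpoint $t = \alpha$ and behaves like $e^{-\alpha}\alpha^{s}/s$ up to lower-order terms. Plugging in, $U_\alpha(r) \approx e^{-\alpha}(2\alpha)^{s}/(s\,\Gamma(s))$ with $s = 1/(1-r) \to \infty$; by Stirling $\Gamma(s) \sim \sqrt{2\pi/s}\,(s/e)^s$, so $U_\alpha(r) \sim e^{-\alpha}\sqrt{s/(2\pi)}\,(2\alpha e/s)^s / s \to 0$ faster than any polynomial — and the plan is to conclude that a radial limit of $0$ combined with an analogue estimate on a whole neighborhood arc of $\T\setminus\{1\}$ (where $U_\alpha$ extends analytically since $(u_\alpha H^2)^\perp$ consists of functions with analytic continuation off $\operatorname{supp}\delta_1 = \{1\}$) forces, via a log-integrability (Szegő) argument, that $U_\alpha$ cannot be a nonzero $H^2$ function unless it vanishes identically, contradicting the previous proposition that $U_\alpha$ has no zeros in $\D$ (in particular $U_\alpha \not\equiv 0$). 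That contradiction completes the proof.
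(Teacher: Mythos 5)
Your plan rests on a hard-asymptotics argument that does not work as described, and it bypasses the soft argument that actually settles the matter. The asymptotics you compute are along the radius to $w=1$, where $s=1/(1-r)\to+\infty$ and $U_\alpha(r)\to 0$ faster than any polynomial; but rapid radial decay at the single boundary point $1$ is perfectly compatible with $H^2$ membership (the inner function $u_\alpha$ itself decays this way and is bounded), so this computation gains nothing. Worse, the Szeg\H{o}/log-integrability step cannot be run: on any arc of $\T\setminus\{1\}$ the exponent $w/(1-w)$ stays on the line $\re \zeta=-\tfrac12$, the integral formula for $U_\alpha$ extends continuously with generically nonzero values, and there is no set of positive measure on which the radial limits vanish, so no contradiction with $U_\alpha\not\equiv 0$ is available. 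If one insists on a boundary-value proof, the relevant phenomenon is growth, not decay, and it is tangential rather than radial: writing $1/(1-w)=\tfrac12+i\tau$ for $w\in\T$ near $1$, the factor $1/\Gamma(\tfrac12+i\tau)$ grows like $e^{\pi|\tau|/2}$ as $|\tau|\to\infty$, and one would have to show that this beats the oscillatory decay of $\int_0^\alpha e^{-t}t^{-1/2+i\tau}\,dt$ strongly enough that the boundary values fail to be square-integrable near $1$. You never engage with this, and it is a delicate estimate.

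The paper avoids all of this with a structural argument you gesture at in your opening paragraph but do not carry out. Since $U_\alpha$ is a cyclic vector for $M_z$ on $\mathcal{M}_\alpha := K((u_\alpha H^2)^\perp)$ (Proposition \ref{117yyyY77yRR}), if $U_\alpha$ belonged to $H^2$ then every $pU_\alpha$ with $p\in\C[z]$ would lie in $\mathcal{M}_\alpha\cap i(H^2)$, making that intersection dense in $\mathcal{M}_\alpha$; by the Kriete--Trutt criterion \cite[Thm.~5]{MR350489} this would force $\mathcal{M}_\alpha=\overline{u\mathcal{H}^2(\mu)}$ for some inner function $u$, contradicting Proposition \ref{nosthnehrenaasdd}. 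No asymptotics are needed. As written, your proposal has a genuine gap at its final step.
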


\begin{proof}
From Proposition \ref{nosthnehrenaasdd}, we know that $K((u_{\alpha} H^2)^{\perp})$ is never equal to $\overline{u \mathcal{H}^2(\mu)}$ for any inner function $u$. If $U_{\alpha} \in H^2$, then, since $\{p U_{\alpha}: p \in \C[z]\}$ is dense in $K((u_{\alpha} H^2)^{\perp})$ (recall Proposition \ref{117yyyY77yRR}), then \cite[Thm.~5]{MR350489} (mentioned above) would say that $K((u_{\alpha} H^2)^{\perp}) = \overline{u \mathcal{H}^2(\mu)}$, which we know is not the case.
\end{proof}

\section{A connection to universal operators}\label{Careadsdfds}

A bounded operator $U$ on a Hilbert space $\mathcal{H}$ is {\em universal} for $\mathcal{H}$ if given any bounded operator $T$ on $\mathcal{H}$, there exists a constant $a \not = 0$ and a closed invariant subspace $\mathcal{M}$ for $U$ such that $U|_{\mathcal{M}}$ is similar to $a T$. Universal operators thus have an extraordinarily rich class of invariant subspaces. Though universal operators might not seem to exist at all, a theorem of Caradus \cite{MR250104} (see also \cite[Ch.~8]{MR2841051}) shows they exist in abundance. Indeed, if $\mathcal{H}$ is an infinite dimensional  separable Hilbert space and $U$ is a bounded operator on $\mathcal{H}$ such that $\ker(U)$ is infinite dimensional and $U$ is surjective, then $U$ is universal.

For example, if $u$ is an inner function that is not a finite Blaschke product, then the co-analytic Toeplitz operator $T_{\overline{u}}$ on $H^2$ satisfies Caradus' criterion and is thus universal \cite[Prop.~16.7.1]{MR4545809}.
Although neither $C$ nor $C^*$ is universal (because they are injective),
we do have the following.

\begin{Theorem}\label{uuuuniverereersal}
There exists a bounded analytic function $F$ on the disk $D(1, 1)$ such that $F(C^{*})$ is universal.
\end{Theorem}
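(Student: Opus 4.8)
The plan is to use the Caradus criterion: it suffices to produce a bounded analytic function $F$ on $D(1,1)$ such that $F(C^*)$ is surjective with infinite-dimensional kernel. The natural move is to exploit the connection, developed in \S\ref{two} and made precise in \cite{MR4373152}, between $C^*$ and the composition operators $C_{\psi_t}$ on $H^2(\C^+)$, together with the cited result from \cite{MR4373152} asserting that for suitable parabolic/hyperbolic non-automorphic symbols $\varphi$, the translate $C_\varphi - \lambda I$ is universal on $H^2$ (equivalently, on $H^2(\C^+)$ via $\mathcal{U}$). So I would first recall from the text that $\sigma(C^*) = \overline{D(1,1)}$ and that, via $(C^*f)(z) = -\int_0^\infty e^{-t}(C_{\phi_t}f)(z)\,dt$, the operator $C^*$ is the value at $1$ of the resolvent of the generator $A$ of the semigroup $\{C_{\phi_t}\}$; that is, $C^* = (I - A)^{-1}$ up to the sign conventions in the paper. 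Hence any $C_{\phi_s}$ can be recovered from $C^*$ by functional calculus: if $A$ has spectrum in a half-plane, then $C_{\phi_s} = e^{sA}$, and $A = I - (C^*)^{-1}$, so $C_{\phi_s} = \exp\big(s(I - (C^*)^{-1})\big)$. But $(C^*)^{-1}$ is unbounded, so this exact expression is not an admissible $F(C^*)$; the remedy is to choose $F$ to be a \emph{bounded} analytic function on $D(1,1)$ that agrees with (or dominates the relevant spectral behaviour of) $\exp(s(1 - 1/z))$ in the right way.

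More concretely, here is the route I would take. Using $\mathcal{U}: H^2 \to H^2(\C^+)$ and the identity $\mathcal{U}C_{\phi_t}\mathcal{U}^{-1} = e^t C_{\psi_t}$ with $\psi_t(s) = e^t s + (e^t-1)$, transport everything to $H^2(\C^+)$. There $C^*$ becomes $-\int_0^\infty e^{-t} e^t C_{\psi_t}\,dt = -\int_0^\infty C_{\psi_t}\,dt$, and one computes its action explicitly: composition with $\psi_t$ amounts, after conjugating by the exponential/Laplace picture of \S\ref{two}, to a weighted shift $f(y)\mapsto f(y-t)$ on a weighted $L^2$ of $\R$, whose "resolvent at $1$" is multiplication by $1/(s+\tfrac12)$ on $H^2(\C^+)$ (exactly the computation appearing in the proof of Theorem \ref{zbzbssjjquUU}, where $(I-A^*)^{-1}$ becomes $M_{1/(s+1/2)}$). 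So, up to unitary equivalence, $C^*$ is a functional-model operator whose interesting invariant-subspace structure lives inside composition-operator land. The key idea is then: pick $s>0$ and set
$$
F(z) := \exp\!\Big(s\Big(1 - \frac1z\Big)\Big).
$$
On $D(1,1)$ one has $\Re(1/z) > \tfrac12 \cdot \tfrac{1}{|z|^2}\cdot 2\Re z / \ldots$ — more cleanly, $1/z$ maps $D(1,1)$ onto the half-plane $\{\Re w > \tfrac12\}$, so $\Re(1 - 1/z) < \tfrac12$ there, whence $|F(z)| < e^{s/2}$ on $D(1,1)$: $F$ is bounded and analytic on $D(1,1)$, so $F(C^*)$ makes sense by the Riesz–Dunford functional calculus ($\sigma(C^*)\subset\overline{D(1,1)}$, and one works on a slightly larger disk on whose closure $F$ is still bounded, or uses that $C^*$ has no eigenvalue on the boundary circle). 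By the spectral mapping / composition-semigroup identity, $F(C^*)$ is precisely $C_{\phi_s}$ (equivalently $e^{-s}C_{\psi_s}$ transported back), a scalar multiple of a composition operator with non-automorphic symbol $\phi_s(z) = e^{-s}z + (1-e^{-s})$, which fixes the boundary point $1$ with derivative $e^{-s}<1$: a hyperbolic (Denjoy–Wolff on the boundary) symbol with no interior fixed point.

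At that point I would invoke the result from \cite{MR4373152} cited in the introduction: for this class of composition operators $C_\varphi$ on $H^2$ with a boundary fixed point of hyperbolic type, a suitable \emph{translate} $C_\varphi - \lambda I$ is universal in the sense of Rota. So rather than $F(z) = \exp(s(1-1/z))$ alone, I would take
$$
F(z) := \exp\!\Big(s\Big(1 - \frac1z\Big)\Big) - \lambda,
$$
still bounded and analytic on $D(1,1)$, for which $F(C^*) = C_{\phi_s} - \lambda I$ (after the harmless scalar normalization, absorbing the $e^{-s}$ factor into $s$ and $\lambda$), which is exactly an operator of the form shown universal in \cite{MR4373152}. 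One then just checks the Caradus hypotheses directly from that source — $\ker(F(C^*))$ infinite-dimensional and $F(C^*)$ onto — or simply quotes universality verbatim.

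I expect the main obstacle to be two linked technical points rather than the conceptual structure. First, justifying that $F(C^*)$ really equals the composition operator $C_{\phi_s}$ (up to the explicit scalar) requires care with the functional calculus at the boundary of $D(1,1)$: $F$ is bounded but its natural description $\exp(s(1-1/z))$ has an essential singularity "at $0$", which is outside $\overline{D(1,1)}$, so this should be fine, but one must confirm $F$ extends holomorphically to a neighbourhood of $\overline{D(1,1)}$ minus $\{0\}$ and that the Riesz functional calculus applied to $C^*$ (whose spectrum is the closed disk, touching $0$ nowhere) genuinely reproduces the semigroup element — this is where the generator/co-generator analysis of \cite{GP} and the identity $(C^* f)(z) = -\int_0^\infty e^{-t}(C_{\phi_t}f)(z)\,dt$ do the real work, via the standard fact that for a bounded $C_0$-semigroup with bounded generator-resolvent one has $e^{sA} = \widehat{g_s}(R)$ for the appropriate $g_s$. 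Second, one must be slightly careful about which scalar $\lambda$ makes the translate universal and confirm that the corresponding $F$ is still genuinely bounded on $D(1,1)$ — but since adding a constant preserves boundedness, this is a bookkeeping issue. Everything else (boundedness of $F$, $\sigma(C^*) = \overline{D(1,1)}$, the unitary equivalences) is already in the excerpt or in \cite{GP, MR4373152}.
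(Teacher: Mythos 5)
Your strategy is essentially sound and rests on the same external input as the paper's proof: the Carmo--Noor theorem (Theorem \ref{Noor}, from \cite{MR4373152}) supplying a scalar $\lambda$ with $0<|\lambda|<e^{s/2}$ for which $C_{\phi_s}-\lambda I$ is universal, where $\phi_s(z)=e^{-s}z+1-e^{-s}$ is hyperbolic with boundary fixed point $1$, derivative $e^{-s}\in(0,1)$ there, and second fixed point at $\infty$. Where you genuinely diverge is in how you identify $C_{\phi_s}-\lambda I$ with $F(C^{*})$. The paper does this abstractly: it checks, following Deddens \cite{MR310691}, that $C_{\phi_s}$ commutes with $C^{*}$, and then invokes the Shields--Wallen commutant theorem \cite{MR287352}, which says that \emph{every} operator commuting with $C^{*}$ has the form $F(C^{*})$ for some bounded analytic $F$ on $D(1,1)$; the explicit formula for $F$ is only extracted afterwards by matching diagonal matrix entries. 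You instead manufacture $F$ directly from the semigroup relation $C^{*}=(I-A)^{-1}$, taking $F(z)=\exp\big(s(1-\tfrac1z)\big)-\lambda$. This yields an explicit $F$ from the outset and avoids the commutant theorem entirely, which is a genuine simplification of the logical structure --- but it transfers the whole burden onto the functional calculus, and that is the one place your argument is not yet a proof.

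Two repairs are needed there. First, as you half-acknowledge, $F$ has an essential singularity at $0\in\partial D(1,1)$ and $\sigma(C^{*})=\overline{D(1,1)}$ contains $0$, so the Riesz--Dunford calculus is unavailable and ``working on a slightly larger disk'' cannot save it; $F$ is bounded only on $D(1,1)$ itself (where $1/z$ maps onto $\{\Re w>\tfrac12\}$, so $|F(z)+\lambda|<e^{s/2}$). What is actually needed is the $H^{\infty}$ functional calculus for $C^{*}$, which the paper secures by noting that $I-C$ lies in the class $\A$ (see \S\ref{ramrekresas}); you must quote the same fact. Second, the identity $F(C^{*})=C_{\phi_s}-\lambda I$ must be verified for \emph{that} calculus rather than asserted by ``spectral mapping.'' The cleanest route is through the eigenvectors: $C^{*}q_w=(1-w)q_w$ and $C_{\phi_s}q_w=e^{-sw/(1-w)}q_w$, while $F(1-w)+\lambda=\exp\big(s(1-\tfrac{1}{1-w})\big)=e^{-sw/(1-w)}$; since the $H^{\infty}$ calculus acts on eigenvectors by evaluation and the $q_w$ span a dense subspace of $H^2$ by \eqref{ljdfhgsodlfhjbgdsa}, the two bounded operators coincide. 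With these two points supplied your proof is complete. Incidentally, your explicit $F$ is the one consistent with this eigenvector computation: the formula $(1-\alpha)^{1/z-1}-\beta_{\alpha}$ displayed after the paper's proof appears to interchange $\alpha$ and $1-\alpha$ (with the convention $\langle C_{\phi}z^{n},z^{m}\rangle$ for the $(m,n)$ entry, the diagonal of the matrix of $C_{\alpha z+(1-\alpha)}$ is $\alpha^{n}$, not $(1-\alpha)^{n}$), and your $\exp\big(s(1-\tfrac1z)\big)=(e^{-s})^{1/z-1}$ is the corrected version.
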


Note that $C^{*}$ has an $H^{\infty}$ functional calculus (see the remarks in \S \ref{ramrekresas}) and so the operator $F(C^{*})$ makes sense. Our proof of Theorem \ref{uuuuniverereersal} uses a universality result for composition operators from \cite{MR4373152}, an analysis from \cite{MR310691}, and a commutant result from \cite{MR287352}.

The following theorem is from Carmo and Noor \cite[Theorem 4.3]{MR4373152}.

\begin{Theorem}\label{Noor}
Let $\phi$ be a hyperbolic non-automorphism of $\D$ with a fixed point $\zeta\in \T$ and the other outside the closed unit disk $\overline{\D}$, possibly at $\infty$. If $a:=\phi'(\zeta)\in (0, 1)$, then for each $\lambda$ with $0<|\lambda |<a^{-1/2}$ the operator $C_{\phi}-\lambda I$ is universal on the Hardy space $H^2$.
\end{Theorem}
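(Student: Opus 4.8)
The plan is to verify the \emph{Caradus criterion} \cite{MR250104}: for each $\lambda$ with $0<|\lambda|<a^{-1/2}$ I would show that $C_{\phi}-\lambda I$ has infinite-dimensional kernel and is surjective, which forces it to be universal on $H^2$. Since $\phi$ is linear fractional, I would first pass to the right half-plane model: let $\gamma$ be a conformal map of $\D$ onto $\C^{+}$ sending the Denjoy--Wolff point $\zeta$ to $\infty$, so that $\Phi:=\gamma\circ\phi\circ\gamma^{-1}$ is the affine self-map $\Phi(w)=a^{-1}w+c$ of $\C^{+}$, where $\Re c>0$ and the second (exterior) fixed point $w_0$ satisfies $\Re w_0<0$. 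As in \S\ref{two}, the unitary $\mathcal{U}$ conjugates $C_{\phi}$ to a \emph{weighted} composition operator built from $C_{\Phi}$; the scalar and weight bookkeeping is exactly of the type carried out there for the family $\phi_t$, and I would record the normalization in which the eigenfunctions of $C_{\phi}$ become powers of the linearizer.

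For the kernel, I would exhibit the eigenfunctions explicitly. In the $\phi_t$-normalization these are $f_q(z)=(1-z)^{q}$, which satisfy $C_{\phi}f_q=a^{q}f_q$ (equivalently $Af_q=-qf_q$ for the generator $(Af)(z)=(1-z)f'(z)$ appearing in the proof of Theorem \ref{zbzbssjjquUU}); in the general model they are the corresponding powers $(w-w_0)^{-s}$ of the linearizer, again with eigenvalue $a^{s}$. The membership condition $f_q\in H^2$ is precisely $\Re q>-\tfrac12$, which is equivalent to $|a^{q}|=a^{\Re q}<a^{-1/2}$; thus every $\lambda$ with $0<|\lambda|<a^{-1/2}$ is an eigenvalue. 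Infinite multiplicity is then immediate: writing $\lambda=a^{q}$, the countably many branches $q+\tfrac{2\pi i k}{\log a}$, $k\in\Z$, all share the same real part, produce the same eigenvalue, and give linearly independent eigenfunctions. Hence $\dim\ker(C_{\phi}-\lambda I)=\infty$.

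The surjectivity of $C_{\phi}-\lambda I$ is the main obstacle, and I would reduce it to showing that the adjoint $C_{\phi}^{*}-\overline{\lambda}I$ is bounded below, i.e.\ that $\overline{\lambda}$ lies outside the approximate point spectrum of $C_{\phi}^{*}$ for $0<|\lambda|<a^{-1/2}$. The key is the asymmetry between $C_\phi$ and $C_\phi^*$ in the non-automorphic case: although the spectrum of $C_{\phi}^{*}$ is the full closed disk $\{\mu:|\mu|\le a^{-1/2}\}$, the operator $C_{\phi}^{*}$ has \emph{no} approximate eigenvalues in the open punctured disk, which by the eigenfunction computation above is exactly the residual spectrum of $C_\phi^*$ (the conjugates of the eigenvalues of $C_\phi$). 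To establish the quantitative estimate I would invoke Cowen's adjoint formula, which realizes $C_{\phi}^{*}$ as a weighted composition operator with linear fractional symbol whose Denjoy--Wolff data (no fixed point of $\Phi$ in $\C^{+}$, the exterior fixed point $w_0$ lying in $\Re w<0$) forces an explicit lower bound $\|(C_{\phi}^{*}-\overline{\lambda})f\|\ge c_{\lambda}\|f\|$ in the half-plane model for all such $\lambda$. This simultaneously yields closed range and trivial cokernel, hence surjectivity.

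Combining the two parts, $C_{\phi}-\lambda I$ has infinite-dimensional kernel and is onto, so by Caradus' theorem it is universal on $H^2$, which is the assertion. The delicate point, and where I expect the real work to lie, is the bounded-below estimate for the adjoint: the eigenfunction construction makes non-injectivity transparent, but surjectivity genuinely requires controlling the approximate point spectrum of $C_{\phi}^{*}$ away from the whole open disk, and the strict inequality $|\lambda|<a^{-1/2}$ (as opposed to $\le$) is precisely what that estimate buys.
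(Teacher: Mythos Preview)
The paper does not prove this theorem at all: it is quoted verbatim as \cite[Theorem~4.3]{MR4373152} (Carmo--Noor) and then applied. So there is no ``paper's own proof'' to compare against; you are effectively sketching the argument of the original source.

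Your outline is the right one and matches the strategy in \cite{MR4373152} (which in turn follows the classical Nordgren--Rosenthal--Wintrobe template): verify the Caradus criterion by exhibiting an infinite-dimensional eigenspace and proving surjectivity via a bounded-below estimate for the adjoint. The eigenfunction part is correct as written: for the normalized model $\phi_t(z)=az+(1-a)$ one has $C_\phi(1-z)^q=a^q(1-z)^q$, membership in $H^2$ is $\Re q>-\tfrac12$, and the $2\pi i/\log a$-periodicity in $q$ gives infinite multiplicity for every $\lambda$ with $0<|\lambda|<a^{-1/2}$. Your reduction of surjectivity to $C_\phi^*-\overline\lambda I$ being bounded below is also correct (injective adjoint with closed range $\Rightarrow$ surjective $T$).

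The only soft spot is that the surjectivity paragraph is a promise rather than a proof: ``Cowen's adjoint formula \ldots\ forces an explicit lower bound'' is exactly the nontrivial step, and you have not indicated how the estimate actually goes. In \cite{MR4373152} this is where the work is done, using the concrete linear-fractional form of $C_\phi^*$ together with the spectral picture for hyperbolic non-automorphisms (the open disk $\{0<|\mu|<a^{-1/2}\}$ consists entirely of eigenvalues of $C_\phi$, hence of residual spectrum for $C_\phi^*$, and one must still rule out approximate eigenvalues there). If you want a self-contained write-up you will need to fill in that estimate; as a citation of a known result, however, your sketch is accurate and the paper itself simply invokes the theorem.
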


In our case, each of the symbols in the semigroup
$
\phi_t(z)= e^{-t}z + 1 - e^{-t}, t>0,
$
from \S \ref{two}
induces a composition operator with  a universal translate. Here $a_t=\phi_t'(1)= e^{-t}$ and the fixed points are $1 \in \T$ and at $\infty$.
Clearly, the lattice of invariant subspaces of $C_{\phi_t}-\lambda I$ and $C_{\phi_t}$ are the same.

\begin{proof}[Proof of Theorem \ref{uuuuniverereersal}]
To help with the typesetting, fix $t > 0$ and let $\alpha = e^{-t}$. The composition operator induced by $\phi_t$  becomes
$C_{\alpha z + (1 - \alpha)}$ and, with respect to the usual orthonormal basis $\{z^n\}_{n \geq 0}$ for $H^2$, has the matrix representation
$$\begin{bmatrix}
1 & \alpha & \alpha^2 & \alpha^3 & \cdots\\[3pt]
\0 & (1 - \alpha) & 2\alpha (1 - \alpha)& 3 \alpha^2 (1 - \alpha) & \cdots\\[3pt]
\0 & \0 & (1 - \alpha)^2 & 3 \alpha (1 - \alpha)^2 & \cdots \\[3pt]
\0 & \0 & \0 & (1 - \alpha)^3 & \cdots \\
\vdots &\vdots\ & \vdots & \vdots &\ddots
\end{bmatrix}.$$
See a paper of Deddens \cite[p.~862]{MR310691} for the details of this.
Note that $C^{*}$  has the matrix representation
$$ \begin{bmatrix}
1 & \frac{1}{2} & \frac{1}{3} & \frac{1}{4} & \frac{1}{5} &  \cdots\\[3pt]
\0 & \frac{1}{2} & \frac{1}{3} & \frac{1}{4} & \frac{1}{5} & \cdots\\[3pt]
\0 & \0 & \frac{1}{3} & \frac{1}{4} & \frac{1}{5} & \cdots\\[3pt]
\0 & \0 & \0 & \frac{1}{4} & \frac{1}{5} & \cdots\\[3pt]
\0 & \0 & \0 & \0 & \frac{1}{5} & \cdots\\[-3pt]
\vdots & \vdots & \vdots & \vdots & \vdots & \ddots
\end{bmatrix}.$$
One can check, as Deddens did, that the two operators above commute. By Theorem \ref{Noor} there exists a constant $\beta_{\alpha}$ for which
$$C_{\alpha z + (1 - \alpha)} - \beta_{\alpha} I$$
is universal. Moreover, this operator also commutes with $C^{*}$ and has the matrix representation
$$\begin{bmatrix}
1 - \beta_{\alpha} & \alpha & \alpha^2 & \alpha^3 & \cdots\\[3pt]
\0 & (1 - \alpha) - \beta_{\alpha} & 2\alpha (1 - \alpha)& 3 \alpha^2 (1 - \alpha) & \cdots\\[3pt]
\0 & \0 & (1 - \alpha)^2  - \beta_{\alpha} & 3 \alpha (1 - \alpha)^2 & \cdots \\[3pt]
\0 & \0 & \0 & (1 - \alpha)^3 - \beta_{\alpha} & \cdots \\
\vdots &\vdots\ & \vdots & \vdots &\ddots
\end{bmatrix}.$$
By a result of Shields and Wallen \cite{MR287352} (which describes the commutant of $C$ -- and thus $C^{*}$), there is a bounded analytic function $F$ on $D(1, 1)$ such that
$$F(C^{*}) = C_{\alpha z + (1 - \alpha)} - \beta_{\alpha} I.$$
This says that $F(C^{*})$ is universal.
\end{proof}

By a trick from \cite[p.~863]{MR310691}, we can actually compute $F$. Note that $$F(C^{*}) = \begin{bmatrix}
F(1) & * & * & * & * &  \cdots\\[3pt]
\0 & F(\frac{1}{2}) &* & * & * & \cdots\\[3pt]
\0 & \0 & F(\frac{1}{3}) & * & * & \cdots\\[3pt]
\0 & \0 & \0 & F(\frac{1}{4}) & * & \cdots\\[3pt]
\0 & \0 & \0 & \0 & F(\frac{1}{5}) & \cdots\\[-3pt]
\vdots & \vdots & \vdots & \vdots & \vdots & \ddots
\end{bmatrix},$$
where the $*$ entries in the matrix above are not important. Comparing the diagonal entries of the matrix representations of $C_{\alpha z + (1 - \alpha)} - \beta_{\alpha} I$ and $F(C^{*})$, we see that
$$F(\tfrac{1}{n}) = (1 - \alpha)^{n - 1} - \beta_{\alpha} \quad \mbox{for all $n \geq 1,$}$$ and thus, since $\{\tfrac{1}{n}\}_{n \geq 1}$ is not a Blaschke sequence for the disk $D(1, 1)$, we see that
$$F(z) = (1 - \alpha)^{1/z - 1} - \beta_{\alpha}, \quad z \in D(1, 1).$$

\begin{Remark}
The lattice of invariant subspaces for $C^*$ is strictly contained in the corresponding lattice
for $F(C^*)$. For the function $F$ is not injective on $\sigma_{p}(C^{*}) = D(1, 1)$ (Proposition \ref{BrownShields}),
and so there exist distinct eigenvalues $\lambda$, $\mu$ for $C^*$, with corresponding
eigenvectors $g, h \in H^2$, such that $F(\lambda)=F(\mu)$. Now the one-dimensional space $\mathcal E$
spanned by
$g+h$ is not invariant under $C^*$, but $F(C^*)(g+h)=F(\lambda)g+F(\mu)h$ and
so $\mathcal E$ is an invariant subspace for $F(C^*)$.
\end{Remark}

\section{Some final remarks}\label{ramrekresas}

So how complicated is the invariant subspace structure for the Ces\`{a}ro operator? Since the invariant subspaces for the Ces\`{a}ro operator are in one-to-one (and order preserving) correspondence with the invariant subspaces for the multiplication operator $M_z$ on the Kriete--Trutt space $\mathcal{H}^2(\mu)$, the complexity of the invariant subspaces of the the Ces\`{a}ro operator is reflected in the complexity of the $M_z$-invariant subspaces of $\mathcal{H}^2(\mu)$.

So what kind of space is $\mathcal{H}^2(\mu)$? 
In some ways it is more like the Hardy space $H^2$ than the Beurling space 
$A^2$ of analytic functions on $\D$ which are square integrable with respect to area measure \cite{MR2033762}, since, by the main theorem of \cite{MR2480609},
$\operatorname{dim}(\mathcal{M}/z \mathcal{M}) = 1$ for any nonzero $M_z$-invariant subspace $\mathcal{M}$ of $H^2(\mu)$. 
On the other hand, there are  $M_z$-invariant subspaces which are Beurling-like, and of the form $\overline{u \mathcal{H}^2(\mu)}$, and those which are not.

Some further parallels between the Hardy shift, the Bergman shift, and the operator $T=I-C$
can be observed as follows:
\begin{itemize}
\item $\|T\|=\|T^*\|=1$ and $\sigma(T)=\sigma(T^*)=\overline\D$ (Proposition \ref{BrownShields});
\item Using the matrix representations of $C$ and $C^{*}$ from \eqref{Cedcecefcematrix}, a calculation shows that  $$TT^*=
\begin{bmatrix}
0 & \0 & \0 &  \0 & \cdots\\[3pt]
\0 & \frac{1}{2} & \0 & \0 & \cdots\\[3pt]
\0 & \0 & \frac{2}{3} & \0 & \cdots\\[3pt]
\0 & \0 & \0 & \frac{3}{4} & \cdots\\[3pt]
\vdots & \vdots & \vdots & \vdots & \ddots\\
\end{bmatrix}
$$and so $\|T^* \vec{x}\|<\|\vec{x}\|$ for all $\vec{x} \in \ell^2$.
Hence $T^*$ is completely non-unitary.
\item By    \cite[Prop.\ 4.6]{MR0787041}, the   properties above imply that $T$ (and hence $T^*$) has an isometric $H^\infty$
functional calculus. That is, the operators lie in the class $\A$. 
\item As shown in \cite{MR0955549}, every operator of class $\A$ is reflexive, which
means that every operator fixing all $T$'s invariant subspaces can be approximated in
the weak operator topology by polynomials in $T$. This is a property that
 guarantees a rich subspace lattice.
\end{itemize}

There are other curious facts from  \cite{MR350489} which only add to the mystery of whether or not $\mathcal{H}^2(\mu)$ is closer to a Hardy space or a Bergman space. If $f \in \mathcal{H}^2(\mu) \setminus \{0\}$ vanishes on a sequence $\{z_n\}_{n \geq 1}$  in $\D$ which does not accumulate at the point $1$, then $\{z_n\}_{n \geq 1}$ must satisfy the Blaschke condition $\sum_{n \geq 1} (1 - |z_n|) < \infty$ (which seems to point towards $\mathcal{H}^2(\mu)$ being closer to $H^2$). Moreover, if $J$ is a closed arc of $\T$ which does not contain $1$ and $f \in \mathcal{H}^2(\mu) \setminus \{0\}$, then $\int_{J} \log |f| dm > -\infty$ \cite[Thm.~2.2]{Duren} (again pointing towards $\mathcal{H}^2(\mu)$ being closer to $H^2$). However, there are $f \in \mathcal{H}^2(\mu) \setminus \{0\}$ which vanish on sequences $\{z_n\}_{n \geq 1}$ which do not satisfy the Blaschke condition (pointing towards $\mathcal{H}^2(\mu)$ being closer to $A^2$). 

Furthermore \cite[Lemma 2]{MR281025}, the functions
$$g_{0}(z) = 1;$$
$$g_{n}(z)  = \frac{1}{(1 - z)^n} z (z - \tfrac{1}{2}) \cdots (z - \tfrac{n - 1}{n}), \quad n \geq 1,$$
form an orthonormal basis for $\mathcal{H}^2(\mu)$. In particular,
$$\frac{1}{(1 - z)^n} \in \mathcal{H}^2(\mu) \quad \mbox{for all $n \geq 1$}$$
which shows that $\mathcal{H}^2(\mu)$ is a space much bigger than $H^2$ (and even bigger than $A^2$). It is often the case that ``large'' Hilbert spaces of analytic functions on $\D$ have a rich class of $M_z$-invariant subspaces.

\bibliographystyle{plain}

\bibliography{references}

\end{document}